\documentclass[11pt]{revtex4-2}
\usepackage[a4paper,margin=1in]{geometry}
\usepackage{amsmath,amssymb,amsthm,mathtools}
\usepackage{enumitem}
\usepackage[hidelinks]{hyperref}
\usepackage{booktabs}
\usepackage{microtype}

\newtheorem{theorem}{Theorem}[section]
\newtheorem{lemma}[theorem]{Lemma}
\newtheorem{proposition}[theorem]{Proposition}
\newtheorem{corollary}[theorem]{Corollary}
\newtheorem{definition}[theorem]{Definition}

\newcommand{\eps}{\varepsilon}
\newcommand{\Om}{\Omega}
\newcommand{\R}{\mathbb R}
\newcommand{\C}{\mathbb C}
\newcommand{\Z}{\mathbb Z}
\newcommand{\Sone}{\mathbb S^1}
\newcommand{\dd}{\,d}
\newcommand{\e}{\mathrm e}
\newcommand{\loc}{\mathrm{loc}}

\newcommand{\divop}{\operatorname{div}}

\newcommand{\ii}{i}
\begin{document}
\preprint{APS/123-QED}

\title{The Asymptotic Theory of Ginzburg--Landau Critical Points on Domains with Corners}

\author{Yucen Han}
 \affiliation{Center for Applied Mathematics, Renmin University of China, Beijing 100872, China}

\author{Apala Majumdar}
\affiliation{Department of Mathematics, University of Manchester, Manchester M13 9PL, United Kingdom}


\begin{abstract}
We study the critical points $U_\eps$ of the two-dimensional
Ginzburg--Landau energy on 
domains with corners $\Omega$ and imposed Dirichlet boundary condition which vanishes precisely at the vertices $V$. 
We isolate the forced scalar boundary layer $\rho_\eps$ by the weighted
Lassoued--Mironescu splitting.
Assuming the reduced logarithmic energy bound and vertex logarithmic tightness,
we prove that after passing to a subsequence, the normalized maps $v_\eps = U_\eps/\rho_\eps$
converges in $C^k_{\loc}(\Omega\setminus A)$
$\forall k\ge0$ away from a finite set of interior vortices $A$. The limiting map is a canonical $S^1$-valued
harmonic map $u_*$ containing integer interior vortex factors and fixed
fractional corner factors.
The renormalized energy is defined by removing discs around the
interior vortices and sectors around the vertices and subtracting the
corresponding logarithmic divergences. The interior vortex locations of the limiting map 
are critical points of this renormalized energy. 
\end{abstract}

\maketitle
\section{Introduction}

The asymptotic theory of two-dimensional Ginzburg--Landau vortices
was developed systematically by Bethuel, Brezis, and H\'elein (BBH)
\cite{BBH}. For smooth bounded domains with nonvanishing
$S^1$-valued Dirichlet data, they showed that solutions in the
logarithmic-energy regime converge, away from finitely many
singularities, to a canonical $S^1$-valued harmonic map. The
singularities carry integer degrees and interact through the
renormalized energy. For minimizers, the limiting vortex
configuration minimizes this energy, whereas for general
 critical points it is only required to be a critical
point; see \cite[Chapter~X]{BBH}.
Andr\'e and Shafrir
\cite{ASzeros} studied minimizing Ginzburg--Landau-type
problems on smooth domains with boundary zeros. Their work separates
the scalar boundary layer generated by the vanishing modulus from
the normalized phase and analyzes how minimizing vortices interact
with the boundary zeros.

The purpose of the present manuscript is to develop the corresponding
critical-point theory on domains with corners. 
Thus two difficulties, absent from the
classical BBH setting, occur simultaneously.  First, the domain has sharp
edges.  Near a vertex \(b_j\), the limiting singularity is not an interior
integer vortex but a fixed geometric corner defect determined by the
sector angle \(\omega_j\).  Second, the boundary condition has zeros, i.e., $\rho_b^{-1}(0)=V$ in \eqref{eq:rho_b_0},
and the boundary phase may jump with certain lift  at each
vertex,  i.e., $\eta_j=\varphi_j^+-\varphi_j^-+2\pi m_j=2\pi c_j$ in \eqref{eq:jump} and \eqref{eq:lifted-jump}. 

We overcome the lack of smooth boundary and the degeneracy at the vertices in the following three steps. First, we introduce
the scalar minimizer $\rho_\varepsilon$ with boundary value
$\rho_b$ and normalize the critical point by $
v_\varepsilon
=
\frac{U_\varepsilon}{\rho_\varepsilon}.
$
The weighted Lassoued--Mironescu splitting is used to separates the energy of
the forced scalar boundary layer from the reduced phase and vortex energy. 
Second, we remove fixed sectors around the vertices and work on the
truncated domains $\Omega_\delta$. On every such fixed truncation,
$\rho_\varepsilon$ is uniformly positive and the normalized equation
is uniformly elliptic. A weighted local version of the improved
potential estimate of Bethuel--Orlandi--Smets \cite{BOS} gives a uniform
bound on the number of bad discs, after which the local
critical compactness argument of BBH can be applied in
interior discs and flattened boundary half-discs.
Third, we let $\delta$ tend to zero and use a diagonal exhaustion of
$\Omega\setminus V$. Bad cores that stay away from the vertices give
a finite set of interior vortices, while all unresolved cores
approaching a vertex are absorbed into shrinking vertex sectors.
This procedure yields compactness on every compact subset away from
the interior vortex set and the vertices. 

Our main results are summarized as follows. For every reduced
admissible family of  critical points defined in Definition \ref{def:reduced-admissible-main}, there exist a
subsequence $\eps_\ell$, a finite set of interior points
\[
A=\{a_1,\ldots,a_N\}\Subset\Omega,
\]
and nonzero integers $d_1,\ldots,d_N$ such that
\[
v_{\varepsilon_\ell}
\rightarrow u_*
\quad\text{in }
C^k_{\mathrm{loc}}
\bigl(\Omega\setminus(A\cup V)\bigr)
\quad \forall k\geq0.
\]
The limit $u_*$ is a canonical $S^1$-valued harmonic map. The limiting map is an analogue of the BBH canonical harmonic map. At a vertex $b_j$, $j = 1,\cdots, K$,
however, the singularity is not a free vortex location.  It is forced by the geometry
and by the discontinuity of the boundary phase measured by $c_j$, the signed corner winding.
The renormalized energy $W$ is then defined by the same shrinking-hole philosophy as
in BBH, but the holes now include both small discs around the free interior vortices
and small sectors around the fixed vertices.  
A local stress--energy
argument around each interior vortex gives
\[
\nabla_{a_i}W=0,
\qquad i=1,\ldots,N.
\]
Thus the limiting interior vortex configuration is a critical point
of the polygonal renormalized energy.
This agrees with BBH Chapter X and with Lin's \cite{lin1995solutions} viewpoint that Ginzburg--Landau
critical points are related to critical points of the renormalized energy, rather
than necessarily to minimizers of it. 

\section{Preliminaries and Setup}
\label{sec:preliminaries}
\begin{figure}
    \centering
    \includegraphics[width=0.4\linewidth]{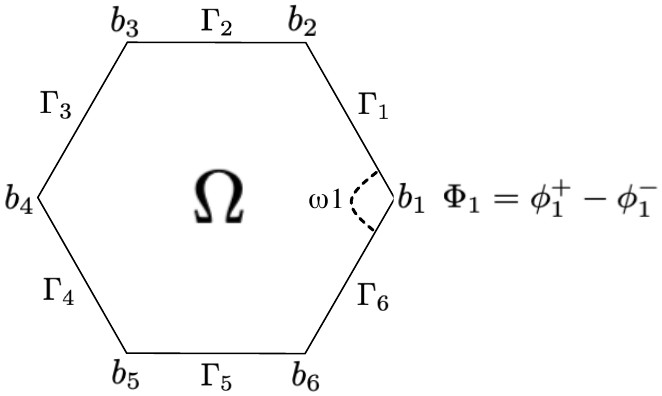}
    \caption{Polygonal geometry and phase jump at vertices.  The boundary is a union of
smooth open sides $\Gamma_k$ meeting at vertices \(b_j\) with interior angle $\omega_j$.  The modulus
\(\rho_b\) vanishes only at \(\{b_1,\cdots,b_K\}\), and the phase has one-sided limits
\(\varphi_j^\pm\), producing the jump \(\Phi_j\).}
    \label{fig:Omega}
\end{figure}
The Ginzburg-Landau free energy is 
\begin{equation}\label{eq:GL_energy}
  E_\eps(U)=\frac12\int_\Om |\nabla U|^2\,dx+
  \frac1{4\eps^2}\int_\Om (1-|U|^2)^2\,dx.
\end{equation}

Let $\Om\subset\R^2\simeq\C$ be a bounded simply connected polygonal or curvilinear polygonal domain. Its boundary consists of finitely many open $C^2$ arcs $\Gamma_1,\ldots,\Gamma_K$,
meeting at vertices
$V=\{b_1,\ldots,b_K\}$.
The interior angle at $b_j$ is denoted by $\omega_j\in(0,2\pi)$. For a straight polygon, a neighbourhood of $b_j$ is exactly the sector
\[
  S_{j}(R)=\{b_j+re^{i\theta}:0<r<R,\ 0<\theta<\omega_j\},
\]
after rotation and translation.
For a curvilinear polygon there is a $C^2$ diffeomorphism $\Psi_j:S_{j}(R)\to\Om\cap U_j$, where $U_j$ is the open neighbourhoods of the vertices $b_j$,
\[
  \Psi_j(y)=b_j+Q_j(y-b_j)+O(|y-b_j|^2),
\]
where $Q_j$ is a rotation and dilation. Hence
\[\Psi_j(b_j)=b_j,
D\Psi_j(b_j)=Q_j,\qquad
  (D\Psi_j)^TD\Psi_j=Q_j^TQ_j+O(r),\qquad r=|y-b_j|.
\]

The Dirichlet boundary condition is
\begin{equation}\label{eq:gdatum}
g=\rho_b e^{i\varphi_b}
\quad\text{on }\partial\Omega\setminus V.
\end{equation}
The modulus $\rho_b:\partial\Omega\to[0,1]$ satisfies
\begin{equation}\label{eq:rho_b_0}
\rho_b^{-1}(0)=V,
\end{equation}
and $\rho_b$ is $C^2$ on each open side $\Gamma_j$ away from the endpoints $b_j$. In particular, we assume the modulus has algebraic zero of order $\alpha_j>0$
\begin{equation}\label{eq:modulus-zero}
  \rho_b(x)=|x-b_j|^{\alpha_j}h_j^\pm(x),
  \qquad h_j^\pm(b_j)>0,
\end{equation}
along the two sides adjacent to $b_j$, where $h_j^{\pm}$ are $C^2$ up to $b_j$ on the two adjacent sides.
Since near each vertex the one-dimensional
$H^{1/2}$ seminorm satisfies
\[
\int_0^\delta\int_0^\delta
\frac{|r^{\alpha_j}-s^{\alpha_j}|^2}{|r-s|^2}\,drds<C,
\]
we have \(\rho_b\in H^{1/2}(\partial\Omega)\).

On each open side $\Gamma_j$ the phase
$\varphi_b\in C^2(\overline{\Gamma}_j;\mathbb{R})$ satisfies
\[
g_{ph}:=e^{i\varphi_b}=\frac{g}{|g|}
\quad\text{on }\Gamma_j.
\]
The boundary current is
\begin{equation}\label{eq:boundary_current}
  q:=g_{\rm ph}\times\,\partial_\tau g_{\rm ph}
  =\partial_\tau\varphi_b
  \quad\text{on }\partial \Omega\setminus V,
\end{equation}
where $\tau$ is a unit tangential vector to $\partial \Omega\setminus V$.
At $b_j$ the one-sided phase limits are
\[
  \varphi_j^-:=\lim_{x\to b_j,\ x\in\Gamma_{j-1}}\varphi_b(x),
  \qquad
  \varphi_j^+:=\lim_{x\to b_j,\ x\in\Gamma_j}\varphi_b(x),
\]
and the prescribed jump is
\begin{equation}\label{eq:jump}
  \Phi_j:=\varphi_j^+-\varphi_j^-.
\end{equation}
A choice of lift supplies $m_j\in\Z$ and a lifted jump
\begin{equation}
  \eta_j=\Phi_j+2\pi m_j=2\pi c_j.
  \label{eq:lifted-jump}
\end{equation}
Thus $c_j$ is the signed corner winding.  

The solutions of 
the corresponding Euler-Lagrange equation is
\begin{equation}\label{eq:GL-EL}
  -\Delta U_\eps=\eps^{-2}U_\eps(1-|U_\eps|^2)
  \quad\text{in }\Om.
\end{equation}
The critical points of the Ginzburg-Landau free energy in \eqref{eq:GL_energy} are the solution of \eqref{eq:GL-EL} in $H^1(\Omega)$.
\section{Main Theorem}

\begin{theorem}[Critical polygonal Ginzburg--Landau limit]
\label{thm:polygon-limit}
Let $\Om\subset\R^2\simeq\C$ be a bounded simply connected polygonal or curvilinear polygonal domain. Its boundary consists of finitely many open $C^2$ arcs $\Gamma_1,\ldots,\Gamma_K$,
meeting at vertices
\[V=\{b_1,\ldots,b_K\}.\]
The interior angle at $b_j$ is denoted by $\omega_j\in(0,2\pi)$. 
The Dirichlet boundary condition is 
$g=\rho_b e^{i\varphi_b}$ on $\partial\Omega\setminus V$.
The modulus $\rho_b:\partial\Omega\to[0,1]$ satisfies
$
\rho_b^{-1}(0)=V$,
$\rho_b$ is $C^2$ on each open side $\Gamma_j$ away from the endpoints $b_j$, and \(\rho_b\in H^{1/2}(\partial\Omega)\).
On each open side $\Gamma_j$ the phase
$\varphi_b\in C^2(\overline{\Gamma}_j;\mathbb{R})$ satisfies
$g_{ph}:=e^{i\varphi_b}=\frac{g}{|g|}$ on $\Gamma_j$.
At $b_j$, the lifted phase jump is
\begin{equation}
  \eta_j=\lim_{x\to b_j,\ x\in\Gamma_j}\varphi_b(x)-\lim_{x\to b_j,\ x\in\Gamma_{j-1}}\varphi_b(x)+2\pi m_j=2\pi c_j.
\end{equation}
where $m_j\in\Z$ and $c_j$ is the signed corner winding. 

Let
$U_\eps\in H^1(\Omega;\C)$ be a reduced
admissible family of  critical points of
\eqref{eq:GL_energy} defined in Definition \ref{def:reduced-admissible-main} with trace $g$.  Let $\rho_\eps$ be the scalar solution of \eqref{eq:scalar-profile} and
set $v_\eps=U_\eps/\rho_\eps$. Then there are a sequence
$\eps_\ell\rightarrow0$, an integer $N\ge0$, distinct interior points
\[
  A=\{a_1,\ldots,a_N\}\Subset \Omega,
\]
 nonzero winding number $d_1,\ldots,d_N\in\Z$, 
for which the configuration $\mathcal C=(a_i,d_i;b_j,c_j)$  is \emph{topologically compatible} with the normalized
boundary phase
\begin{equation}
  \sum_{i=1}^N d_i
  =\frac1{2\pi}\int_{\partial \Omega\setminus V}q\,\dd s
  +\sum_{j=1}^Kc_j,
  \label{eq:topological-balance}
\end{equation}
such that the normalized maps converge locally
smoothly away from the interior vortex set and the vertices,
\[
  v_{\eps_\ell}\rightarrow u_*
  \quad\text{in }C^k_{\rm loc}
  \bigl(\Omega\setminus(A\cup V)\bigr)
  \quad \forall k\ge0.
\]
The limit $u_*$ is the unique proper canonical $S^1$-valued harmonic map
$u_*=u_{\mathcal C}$ defined by
\eqref{eq:canonical-map-main}--\eqref{eq:canonical-boundary-lift}.
The associated renormalized energy $W$ is given by \eqref{eq:general-W} and every interior vortex
location $a_i$ satisfies
\begin{equation}
  \nabla_{a_i}W(\mathcal C)=0,
  \qquad i=1,\ldots,N.
  \label{eq:main-criticality}
\end{equation}
\end{theorem}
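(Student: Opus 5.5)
The proof follows the three-step strategy outlined in the introduction.

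\emph{Step 1: splitting.} Since $\rho_\eps$ minimizes the scalar energy with trace $\rho_b$, it is positive in $\Omega$, satisfies $-\Delta\rho_\eps=\eps^{-2}\rho_\eps(1-\rho_\eps^2)$, and $\rho_\eps\le1$. Writing $U_\eps=\rho_\eps v_\eps$ and integrating by parts with the scalar equation gives the weighted Lassoued--Mironescu identity
\[
E_\eps(U_\eps)=E_\eps(\rho_\eps)+\frac12\int_\Omega\rho_\eps^2|\nabla v_\eps|^2+\frac1{4\eps^2}\int_\Omega\rho_\eps^4(1-|v_\eps|^2)^2.
\]
It holds because $v_\eps$ has trace $e^{i\varphi_b}$, which has modulus one, so the boundary term vanishes. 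The normalized map satisfies the weighted equation
\[
-\divop(\rho_\eps^2\nabla v_\eps)=\eps^{-2}\rho_\eps^4 v_\eps(1-|v_\eps|^2).
\]
The reduced logarithmic bound of Definition~\ref{def:reduced-admissible-main} controls the reduced energy $F_\eps(v_\eps)$ by $C|\log\eps|$.

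On each truncation $\Omega_\delta=\Omega\setminus\bigcup_j \overline{S_j(\delta)}$, a barrier comparison near the smooth sides together with the positivity of $\rho_b$ away from $V$ gives $\rho_\eps\ge c_\delta>0$ up to $\partial\Omega\cap\partial\Omega_\delta$. It also gives $1-\rho_\eps\le C_\delta e^{-c\,\mathrm{dist}(x,\partial\Omega)/\eps}+C\eps^2$ and uniform $C^k$ bounds for $\rho_\eps$ on the $\eps$-scale. The weighted equation is therefore uniformly elliptic on $\Omega_\delta$, with coefficients that are smooth after $\eps$-rescaling.

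\emph{Step 2: compactness on a fixed truncation.} Fix $\delta$. First derive a local Pohozaev identity for the weighted equation; the extra terms involving $\nabla\rho_\eps^2$ are controlled by the exponential layer estimate. This yields a BOS-type improved potential bound
\[
\eps^{-2}\int_{\Omega_{\delta}}(1-|v_\eps|^2)^2\le C_\delta.
\]
The clearing-out lemma (an $\eta$-ellipticity result for $|v_\eps|$ near $1$ on discs of radius $\eps$) then bounds the number of bad discs by $N_\delta$, independently of $\eps$. The bad discs are interior ones and flattened half-discs along the $C^2$ sides.

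Outside the bad discs, $|v_\eps|\ge\frac12$. Write $v_\eps=|v_\eps|e^{i\psi_\eps}$ locally. Then $\divop(\rho_\eps^2|v_\eps|^2\nabla\psi_\eps)=0$, and the BBH $W^{1,p}$ and bootstrap arguments (Chapter~X of \cite{BBH}) give $C^k_{\loc}$ convergence, after extraction, to an $S^1$-valued harmonic map on $\Omega_\delta$ minus finitely many points. Boundary bad discs cannot persist, because the data are smooth and unimodular near the sides. Hence the limiting singularities lie in the interior, carry integer degrees $d_i$, and the limit is smooth up to the sides.

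\emph{Step 3: exhaustion and identification.} Take $\delta_n\downarrow0$ and extract diagonally. Vertex logarithmic tightness bounds the energy carried by cores accumulating at a vertex by $\pi c_j^2|\log\eps|$ plus $o(|\log\eps|)$. Consequently only finitely many interior vortices $A$ survive, with $N\le C$ independent of $n$, and every other core is absorbed into the vertex sectors. The limit $u_*$ is then harmonic and $S^1$-valued on $\Omega\setminus(A\cup V)$, with boundary phase $\varphi_b$, and has locally finite energy away from $A\cup V$. The same tightness shows that its winding on small arcs around $b_j$ equals the prescribed $c_j$.

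Integrating $u_*\times\partial_\tau u_*$ over $\partial\Omega_\delta$ minus small circles around the $a_i$ gives the topological balance \eqref{eq:topological-balance}. Properness, meaning that $u_*\times\nabla u_*-\sum_i d_i\nabla^\perp\log|x-a_i|$ has bounded energy near the vertices, is exactly the tightness condition. Uniqueness in the class \eqref{eq:canonical-map-main}--\eqref{eq:canonical-boundary-lift} then identifies $u_*=u_{\mathcal C}$.

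\emph{Criticality and main obstacle.} Near each $a_i$, set $T_{\ell m}=\partial_\ell v\cdot\partial_m v-\frac12\delta_{\ell m}|\nabla v|^2$, suitably weighted by $\rho_\eps^2$ (which is smooth and equal to $1+O(\eps^2)$ there). The equation gives $\divop T=O(\eps^{-2}\text{ potential})$. Integrating over a disc $B_r(a_i)$ and using the improved potential bound shows that the flux of the limiting stress tensor $T(u_*)$ through $\partial B_r(a_i)$ vanishes. Expanding $u_*=e^{i d_i\theta_i}e^{iH}$, with $H$ smooth near $a_i$, this flux equals a constant multiple of $d_i\nabla^\perp H(a_i)$, which is proportional to $\nabla_{a_i}W$ by the BBH formula adapted to \eqref{eq:general-W}. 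Hence $\nabla_{a_i}W=0$.

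I expect the main obstacle to be Step 2's weighted improved potential estimate near the sides at distance $\delta$ from a vertex. There $\rho_\eps$ has an $\eps$-boundary layer, and the Pohozaev boundary terms must be controlled using the Dirichlet data and the layer estimates. My first attempt would be to apply the BOS argument on half-discs after flattening, treating $\nabla\rho_\eps$ via $\int\rho_\eps|\nabla\rho_\eps|^2\le C_\delta/\eps$ along the side.
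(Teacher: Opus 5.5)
Your outline follows the paper's architecture: weighted Lassoued--Mironescu splitting, fixed truncations $\Omega_\delta$, BBH Chapter~X compactness, diagonal exhaustion, uniqueness of the canonical map, and stress-tensor criticality. However, the vertex step that identifies $u_*$ does not work as written.

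First, the quantity you call properness, $u_*\times\nabla u_*-\sum_i d_i\nabla^\perp\log|x-a_i|$, cannot have bounded energy near $b_j$ when $c_j\neq0$. It still contains the corner part $(\eta_j/\omega_j)\nabla\vartheta_j$, whose energy on $S_j(r_0)\setminus S_j(r)$ is $(\eta_j^2/\omega_j)\ln(r_0/r)$. The correct requirement (Definition~\ref{def:proper-polygonal-map}) is that the remainder $H_j$ in the lift $\varphi_j^-+(\eta_j/\omega_j)\vartheta_j+H_j$ be bounded.

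Second, properness is not ``exactly the tightness condition''. Passed to the limit by lower semicontinuity, \eqref{eq:vertex-log-tightness} only gives $\int_{S_j(r_0)\setminus S_j(r)}|\nabla H_j|^2\le C(1+\ln(r_0/r))$ with an arbitrary constant, which bounds neither $H_j$ nor its energy. The missing idea is the sector-mode argument of Lemma~\ref{pf:lem:vertex-properness}. Subtract the bounded harmonic extension of the side traces of $H_j$, and expand the zero-trace remainder in modes $r^{\pm n\pi/\omega_j}\sin(n\pi\vartheta_j/\omega_j)$. Each negative mode has energy growing like $r^{-2n\pi/\omega_j}$, which the logarithmic bound forbids; in the curvilinear case the coefficients carry an $O(r)$ perturbation.

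This step is indispensable. Take $P_j$ harmonic in $\Omega$, vanishing on $\partial\Omega\setminus V$, and behaving like $r^{-\pi/\omega_j}\sin(\pi\vartheta_j/\omega_j)$ at $b_j$. Then the maps $u_{\mathcal C}e^{\ii tP_j}$, $t\in\R$, are $S^1$-valued harmonic maps with the same side traces, interior degrees and corner jumps. So uniqueness cannot identify $u_*$ without properness. Relatedly, tightness does not force a ``prescribed'' $c_j$. The integer $m_j$ is read off from the phase increment of $u_*$ across the arcs $\partial B(b_j,r)\cap\Omega$, as in Lemma~\ref{pf:lem:cornerjumps}.

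Three further points need repair.

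In Step~2, a local Pohozaev identity on $B_r(x_0)$ by itself only bounds the potential on $B_{r/2}$ by a multiple of the local energy, i.e.\ by $O(\ln\frac1\eps)$. The $O(1)$ bound is the genuinely improved Bethuel--Orlandi--Smets estimate. The paper applies it in perturbed form (Proposition~\ref{pf:prop:weighted-bos}) on interior discs and on gauged, evenly reflected half-discs. It uses only the scaled coefficient bound $\eps\|\nabla\rho_\eps\|_{L^\infty(\Omega_\delta)}\le C_\delta$ of Lemma~\ref{pf:lem:scalar-fixed-truncation}. The exponential layer estimate you invoke gives no smallness of $\nabla\rho_\eps^2$ in the $O(\eps)$ layer along the sides, where $|\nabla\rho_\eps|$ is of order $(1-\rho_b)/\eps$.

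In Step~3, \eqref{eq:vertex-log-tightness} carries an arbitrary constant $C_j$, so it gives no $\pi c_j^2\ln\frac1\eps$ bound, and it is not what bounds $N$. Finiteness comes from $F_\eps(v_\eps)\le C\ln\frac1\eps$ together with the BBH lower bound $(\pi\nu^2-o(1))\ln\frac1\eps$ on disjoint fixed discs around the bad limits (Step~4 of Theorem~\ref{thm:clearing-main}). That lower bound requires $\nu\neq0$, and you never show the limiting degrees are nonzero, although the statement asserts it. The paper's Step~3 excludes degree-zero bad limits via the degree-zero convergence theorem of BBH Appendix~I. Without this, a degree-zero cluster would also obstruct the claimed $C^k$ convergence on $\Omega\setminus(A\cup V)$.

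Finally, in the criticality step the vanishing flux should come from an exact conservation law, as the paper does with $\divop T_\eps=0$ for the full tensor of $U_\eps$. The potential must enter only through its values on $\partial B(a_i,r)$, where it tends to zero; a bound on the potential inside $B(a_i,r)$ does not control $\int_{B(a_i,r)}\divop T$.
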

\begin{proof}
Proposition~\ref{prop:scalar-reduction} uses the  weighted Lassoued–Mironescu splitting to get the normalized map $v_{\eps}$ and Definition~\ref{def:reduced-admissible-main} supplies the
logarithmic energy scale.  Proposition~\ref{prop:fixed-truncation-main}
then yields the uniform local potential estimate required for the bad-disc
construction.  Theorem~\ref{thm:clearing-main} gives a finite interior
singular set, nonzero integer degrees, and local smooth convergence away
from that set and the vertices.

The vertex-tightness part of Definition~\ref{def:reduced-admissible-main}
and Lemma~\ref{pf:lem:vertex-properness} exclude additional negative
sector modes.  Proposition~\ref{prop:canonical-main} then supplies the
lifted corner jumps, proves the topological balance
\eqref{eq:topological-balance}, and identifies the limit with the unique canonical map
\eqref{eq:canonical-map-main}.  Proposition~\ref{prop:finite-part-main}
constructs its renormalized energy and identifies it with
\eqref{eq:general-W}.  Finally,
Proposition~\ref{prop:derivative-main} and
Theorem~\ref{thm:vanishing-main} give the conditions
$\nabla_{a_i}W=0$ for all interior vortices.  These statements are exactly
the compactness, canonical-map, finite-part, and criticality conclusions
of Theorem~\ref{thm:polygon-limit}.
\end{proof}

\section{Proof of Main Theorem}
\subsection{Scalar profile $\rho_{\eps}$ and Weighted Lassoued--Mironescu splitting }

For each $\eps>0$, let $\rho_\eps$ be the non-negative minimizer of the scalar minimization problem, 
\begin{equation}\label{eq:rho-min}
  \rho_\eps=\operatorname*{argmin}\left\{
    E_{\eps}(\rho):
    \rho\in H_{\rho_b}^1(\Om),\rho\geq0
  \right\},
\end{equation}
which satisfies
\begin{equation}\label{eq:rho-pde}
  -\Delta\rho_\eps=\eps^{-2}\rho_\eps(1-\rho_\eps^2).
\end{equation}
$E_\eps$ is defined as
\begin{equation}
  E_\eps(\rho)=\frac12\int_\Om |\nabla \rho|^2\,dx+
  \frac1{4\eps^2}\int_\Om (1-\rho^2)^2\,dx.
\end{equation}

\begin{lemma}[Scalar estimates used below]\label{pf:lem:scalar}
The unique scalar minimizer $\rho_\eps$ satisfies the following estimates.
\begin{enumerate}[label=(\roman*)]
\item $0\le \rho_\eps\le1$ in $\Omega$.
\item If \(\mathcal K\Subset\overline \Omega\setminus V\), and if
\(\mathcal K\cap\partial \Omega\) is contained in compact subarcs of the open sides
\(\Gamma_i\), then there exists \(c_{\mathcal K}>0\) such that
\[
\rho_\eps\ge c_{\mathcal K}
\quad\text{on }\mathcal{K}\cap \Omega,
\]
for all sufficiently small $\eps$.
\item If $\mathcal K\Subset \Omega$ is an interior compact set, then
\[
  |1-\rho_\eps|+\eps|\nabla\rho_\eps|\le C_{\mathcal K} e^{-c_{\mathcal K}/\eps}
  \quad\text{on }{\mathcal K}.
\]
\end{enumerate}
\end{lemma}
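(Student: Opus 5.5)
We prove the three estimates in order: (i) by truncation and the maximum principle, (ii) by building local subsolutions near each open side, and (iii) by a comparison argument in interior discs. First, existence and uniqueness. The functional $E_\eps$ is coercive on the closed convex set $\{\rho\in H^1_{\rho_b}(\Omega),\rho\ge0\}$, which is nonempty because $\rho_b\in H^{1/2}(\partial\Omega)$. It is also weakly lower semicontinuous, so a minimizer exists. Uniqueness follows from the convexity of $E_\eps$ in the variable $\rho^2$ along nonnegative competitors (the Brezis--Oswald argument), as used by Lassoued--Mironescu and Andr\'e--Shafrir.

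For (i), note that replacing $\rho$ by $\min(\rho,1)$ does not increase either term of $E_\eps$, since $\rho_b\le1$. Uniqueness then forces $\rho_\eps\le1$, and $\rho_\eps\ge0$ holds by definition of the admissible class. Elliptic regularity for \eqref{eq:rho-pde} gives $\rho_\eps\in C^\infty(\Omega)$, and $C^{1,\beta}$ up to each open side away from $V$, since $\rho_b$ is $C^2$ there. The strong maximum principle gives $\rho_\eps>0$ in $\Omega$.

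For (iii), fix $\mathcal K\Subset\Omega$ and $R>0$ with $\operatorname{dist}(\mathcal K,\partial\Omega)>2R$. On each disc $B_{2R}(x_0)$, $x_0\in\mathcal K$, we compare $\rho_\eps$ with a radial subsolution. Let $w_\eps$ solve the scalar equation in $B_{2R}$ with zero boundary data; it is the positive minimizer, which exists once $\eps$ is small. Since $\rho_\eps>0=w_\eps$ on $\partial B_{2R}$ and the nonlinearity $s\mapsto s(1-s^2)$ has the right structure (Serrin sweeping over dilated discs, or the comparison principle for positive solutions), we get $\rho_\eps\ge w_\eps$. The standard barrier estimate then gives $1-w_\eps\le Ce^{-c/\eps}$ on $B_R$. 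Concretely, the function $1-w$ satisfies $-\Delta(1-w)+\eps^{-2}w(1+w)(1-w)=0$ with $w(1+w)\ge1$ once $w\ge1/2$, and comparison with $\cosh(|x-x_0|/\eps)/\cosh(R/\eps)$ yields the exponential decay. That $w_\eps\ge 1/2$ on $B_{3R/2}$ for small $\eps$ follows from the energy bound $E_\eps(w_\eps)\le C|\log\eps|^0$ together with the $\eta$-ellipticity/clearing-out lemma for the scalar equation (gradient bound $|\nabla w|\le C/\eps$). The gradient estimate comes from rescaling $x=x_0+\eps y$. The function $\tilde\rho(y)=\rho_\eps(x_0+\eps y)$ solves $-\Delta\tilde\rho=\tilde\rho(1-\tilde\rho^2)$ on $B_{R/\eps}$, so interior Schauder estimates applied to $1-\tilde\rho$, which is exponentially small on $B_2$, give $|\nabla\tilde\rho|\le Ce^{-c/\eps}$, that is $\eps|\nabla\rho_\eps|\le Ce^{-c/\eps}$.

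For (ii), by (iii) we only need to treat a neighbourhood of a compact subarc $\gamma\Subset\Gamma_i$, where $\rho_b\ge2c_0>0$. We flatten the boundary near $x_0\in\gamma$ by a $C^2$ chart on a half-disc $D^+_{r}$ of fixed radius. There we compare $\rho_\eps$ with a subsolution $\underline\rho$ that equals $\min(c_0,\rho_b)$-type data on the flat part and $0$ on the curved part, taking $\underline\rho$ to be the minimizer of the scalar problem on $D^+_r$ with these data. The harder step is a uniform positive lower bound for this local minimizer away from the curved part. Two ingredients give it. First, away from the boundary it tends to $1$, by the argument for (iii). Second, near the flat side we write $\underline\rho\ge\min(c_0,\tfrac12)\,\zeta$ for a boundary barrier $\zeta$ obtained by harmonic extension on scale $1$, which is a subsolution because the nonlinearity is nonnegative on $[0,1]$. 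The maximum principle then gives $\rho_\eps\ge\underline\rho\ge c_{\mathcal K}$ on $\mathcal K$. A finite cover of $\mathcal K$ by such half-discs and interior discs finishes the proof.

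I expect the main obstacle to be this boundary step. One must control the transition layer between the boundary value $\rho_b$ (possibly less than $1$) and the bulk value $1$ uniformly in $\eps$, near a smooth side but not near the vertices. The convexity of the nonlinearity on $[0,1]$ keeps constants times positive harmonic functions as subsolutions, which is what makes the argument work.
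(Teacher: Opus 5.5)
Your existence/uniqueness argument and (i) are fine and essentially match the paper. The genuine gap is in (iii), at the step ``$w_\eps\ge\tfrac12$ on $B_{3R/2}$''. The zero-data minimizer on $B_{2R}$ does not have bounded energy. The data force a transition layer of width $\eps$ along $\partial B_{2R}$, and the Modica--Mortola inequality $\tfrac12|\nabla w|^2+\tfrac1{4\eps^2}(1-w^2)^2\ge \tfrac{1}{\sqrt2\,\eps}(1-w^2)|\nabla w|$, combined with the isoperimetric inequality, shows that $E_\eps(w_\eps)$ is of order $R/\eps$. Even an $O(1)$ bound would not give what you want. Together with $|\nabla w_\eps|\le C/\eps$ it only bounds the \emph{number} of disjoint $c\eps$-discs on which $w_\eps<\tfrac12$, whereas clearing-out needs the local energy to be \emph{small}. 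So no interior lower bound for $w_\eps$, and hence for $\rho_\eps$, has been established. Your (ii) inherits the gap, because it invokes (iii) for the part of $\mathcal K$ away from the sides. There are also two smaller slips in the barrier step. First, $w(1+w)\ge1$ needs $w\ge(\sqrt5-1)/2$, not $w\ge\tfrac12$. Second, in two dimensions $\cosh(|x-x_0|/\eps)$ is a \emph{sub}solution of $-\Delta+\eps^{-2}$, because of the extra term $\tfrac{1}{r\eps}\sinh(r/\eps)$. You need $\cosh(\mu|x-x_0|/\eps)$ with $2\mu^2$ at most the lower bound for $w(1+w)$.

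The missing idea is the one-line comparison that makes up the paper's proof of (ii). Since $0\le\rho_\eps\le1$, one has $-\Delta\rho_\eps=\eps^{-2}\rho_\eps(1-\rho_\eps^2)\ge0$, so $\rho_\eps$ is superharmonic with trace $\rho_b$. Testing with $(h-\rho_\eps)^+\in H^1_0(\Omega)$, where $h$ is the harmonic extension of $\rho_b$, gives $\rho_\eps\ge h$. Positivity of $h$ and its continuity up to the open sides then give (ii) on all of $\mathcal K$ at once, for every $\eps$. This needs no flattening, no appeal to (iii), and no auxiliary minimizer $\underline\rho$. That minimizer was problematic anyway: its boundary data jump from $c_0$ to $0$ at the corners of the half-disc, are not in $H^{1/2}$, and so the problem is not posed in $H^1$. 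What makes harmonic functions admissible lower barriers is the nonnegativity of $s(1-s^2)$ on $[0,1]$, not its convexity; it is in fact concave there.

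With this in hand, reverse your order. Applying the comparison on a compact neighbourhood $\mathcal K'\Subset\Omega$ of $\mathcal K$ gives $\rho_\eps\ge c'>0$ there. Then $u=1-\rho_\eps\in[0,1]$ satisfies $-\Delta u+c'\eps^{-2}u\le0$ in $\mathcal K'$, and the corrected barrier on discs of fixed radius yields $u\le Ce^{-c/\eps}$ on $\mathcal K$. Your rescaled linear estimate for $1-\tilde\rho$ then gives the gradient bound. The barrier only ever needs a positive lower bound, never closeness to $1$. The paper itself only cites Andr\'e--Shafrir for (iii). Within your own scheme, a rescaled first Dirichlet eigenfunction swept through $B_{2R/\eps}$ would also supply the missing positive lower bound for $w_\eps$.
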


\begin{proof}
Existence follows from the direct method and the weak maximum principle and the uniqueness follows from the Brezis--Oswald theorem for scalar semilinear equations \cite{BO}.

For (i), $0\le\rho_\eps\le1$ follow from the direct method and the weak maximum principle.

For (ii), let \(h\) be the harmonic extension of \(\rho_b\):
\[
  \begin{cases}
  -\Delta h=0 &\text{in }\Omega,\\
  h=\rho_b &\text{on }\partial \Omega.
  \end{cases}
\]
Since \(\rho_b\ge0\), \(\rho_b\not\equiv0\), and
\(\rho_b^{-1}(0)=V\), the strong maximum principle gives
\(h>0\) in \(\Omega\).  Moreover, if
\(\mathcal K\Subset\overline \Omega\setminus V\) and
\(\mathcal K\cap\partial \Omega\) is contained in compact subarcs of the open sides,
then \(\rho_b\ge m_{\mathcal K}>0\) on those subarcs.  By continuity of the harmonic
extension up to the \(C^2\) side pieces, and by compactness of the interior
part of \(\mathcal K\), there exists \(c_{\mathcal K}>0\) such that
\[
  h\ge c_{\mathcal K}\quad\text{on }\mathcal K\cap \Omega .
\]

We now compare \(\rho_\eps\) with \(h\).  Since \(0\le\rho_\eps\le1\),
the scalar equation gives
\[
  -\Delta\rho_\eps
  =
  \eps^{-2}\rho_\eps(1-\rho_\eps^2)\ge0.
\]
Thus \(\rho_\eps\) is superharmonic.  Set
\[
  \psi=\max\{h-\rho_\eps,0\}\in H^1_0(\Omega).
\]
The weak formulation of the scalar equation gives
\[
  \int_\Omega \nabla\rho_\eps\cdot\nabla\psi
  =
  \eps^{-2}\int_\Omega
  \rho_\eps(1-\rho_\eps^2)\psi
  \ge0,
\]
while harmonicity of \(h\) gives
\[
  \int_\Omega \nabla h\cdot\nabla\psi=0.
\]
Therefore
\[
  \int_\Omega \nabla(\rho_\eps-h)\cdot\nabla\psi\ge0.
\]
On the set \(\{h>\rho_\eps\}\), one has
\(\psi=h-\rho_\eps\), hence
\[
  \nabla(\rho_\eps-h)=-\nabla\psi.
\]
Consequently
\[
  -\int_\Omega|\nabla\psi|^2\ge0,
\]
so \(\psi\equiv0\).  Hence
\[
  \rho_\eps\ge h\ge c_{\mathcal K}
  \quad\text{on }\mathcal K\cap \Omega .
\]

For (iii), the exponential convergence away from the boundary-zero neighbourhoods in (iii) is the standard scalar GL boundary-layer estimate proved by sub/supersolutions and local elliptic gradient estimates. The proof is analogous to Proposition 2.1 in \cite{ASnonvanishing}.
\end{proof}

\begin{proposition}[Scalar reduction and weighted splitting]
\label{prop:scalar-reduction}
The scalar minimizer is unique and satisfies
\begin{equation}
  -\Delta\rho_\eps
  =\eps^{-2}\rho_\eps(1-\rho_\eps^2),
  \qquad 0<\rho_\eps\le1\quad\text{in }\Omega.
  \label{eq:scalar-profile}
\end{equation}
Every solution of \eqref{eq:GL-EL}, $U_\eps$, satisfies $|U_\eps|\le\rho_\eps$.  Consequently,
\[
  v_\eps:=\frac{U_\eps}{\rho_\eps}
\]
is well defined in $\Omega$, satisfying $|v_\eps|\le1$, and has trace
$g_{\rm ph}$ on the physical boundary away from $V$.  Moreover,
\begin{equation}
  E_\eps(U_\eps)=E_\eps(\rho_\eps)+F_\eps(v_\eps),
  \label{eq:LM-main}
\end{equation}
where
\begin{equation}
  F_\eps(v)
  =\frac12\int_\Omega\rho_\eps^2|\nabla v|^2\,\dd x
  +\frac1{4\eps^2}\int_\Omega
  \rho_\eps^4(1-|v|^2)^2\,\dd x.
  \label{eq:reduced-energy-main}
\end{equation}
The normalized map $v_{\eps}$ is a critical point of $F_\eps$ and solves
\begin{align}
  -\divop(\rho_\eps^2\nabla v_\eps)
  &=\eps^{-2}\rho_\eps^4v_\eps(1-|v_\eps|^2)\ on\ \Omega,
  \label{eq:reduced-equation-main}\\
  v_{\eps} &= g_{ph}\ on\ \partial\Omega\backslash V.
\end{align}
\end{proposition}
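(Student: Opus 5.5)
We will prove the proposition in four steps: positivity of $\rho_\eps$, the pointwise bound $|U_\eps|\le\rho_\eps$, the energy identity, and the equation for $v_\eps$.

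\textbf{Positivity.} Uniqueness is already part of Lemma~\ref{pf:lem:scalar}. The strict inequality $\rho_\eps>0$ in $\Omega$ follows from the comparison in the proof of Lemma~\ref{pf:lem:scalar}(ii), which gives $\rho_\eps\ge h>0$, where $h$ is the positive harmonic extension of $\rho_b$. Equation \eqref{eq:scalar-profile} is the Euler--Lagrange equation of \eqref{eq:rho-min}. The constraint $\rho\ge0$ is inactive because $\rho_\eps>0$, and interior regularity makes $\rho_\eps$ smooth in $\Omega$.

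\textbf{The bound $|U_\eps|\le\rho_\eps$.} This is the step I expect to be the main obstacle, since $\rho_b$ vanishes at $V$ and the domain has corners. I would adapt the Brezis--Oswald/Lassoued--Mironescu argument.

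\begin{enumerate}[label=(\alph*)]
\item Set $w=|U_\eps|$, which lies in $H^1$ with trace $\rho_b$. Kato's inequality together with \eqref{eq:GL-EL} gives, weakly,
\[
-\Delta w\le \eps^{-2}w(1-w^2).
\]
\item By the maximum principle, $|U_\eps|\le1$.
\item Test the difference of the equations for $w$ and $\rho_\eps$ with
\[
\Bigl(\frac{w^2-\rho_\eps^2}{w}\Bigr)^+ \quad\text{and}\quad \Bigl(\frac{w^2-\rho_\eps^2}{\rho_\eps}\Bigr)^+ .
\]
Both functions have zero trace, because $w=\rho_\eps=\rho_b$ on $\partial\Omega\setminus V$.
\item Use the Brezis--Oswald monotonicity of $t\mapsto (1-t^2)$, i.e.\ the fact that $f(t)/t$ is decreasing. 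This yields
\[
\int \Bigl|\nabla w-\tfrac{w}{\rho_\eps}\nabla\rho_\eps\Bigr|^2+\Bigl|\nabla\rho_\eps-\tfrac{\rho_\eps}{w}\nabla w\Bigr|^2\;+\;\eps^{-2}\int (w^2-\rho_\eps^2)^2\,\le 0
\]
on the set $\{w>\rho_\eps\}$, hence $w\le\rho_\eps$.
\end{enumerate}

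The test functions are not obviously in $H^1_0$, because $\rho_\eps$ degenerates near $V$. To handle this, I would first replace $\rho_\eps$ by $\rho_\eps+\delta$, or truncate with cutoffs vanishing in sectors $S_j(r)$, and then pass to the limit. The degeneration at the vertices is harmless here because the vertex set $V$ is finite, hence has zero capacity in two dimensions.

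\textbf{Energy identity.} Since $\rho_\eps>0$ and $\rho_\eps$ is smooth inside, $v_\eps=U_\eps/\rho_\eps$ is well defined, smooth in $\Omega$, satisfies $|v_\eps|\le1$, and has trace $g/\rho_b=g_{\rm ph}$ on $\partial\Omega\setminus V$. For \eqref{eq:LM-main}, expand
\[
|\nabla(\rho v)|^2=\rho^2|\nabla v|^2+|\nabla\rho|^2|v|^2+\tfrac12\nabla\rho^2\cdot\nabla|v|^2 .
\]
Integrate the last term by parts against \eqref{eq:scalar-profile}. This gives
\[
-\tfrac12\int\Delta(\rho^2)(|v|^2-1)=\int\bigl(\eps^{-2}\rho^2(1-\rho^2)-|\nabla\rho|^2\bigr)(|v|^2-1),
\]
where the boundary term vanishes because $|v|=1$ on $\partial\Omega\setminus V$. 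Combining this with $|\nabla\rho|^2|v|^2$ leaves $|\nabla\rho|^2$. The potential terms then recombine algebraically:
\[
(1-\rho^2|v|^2)^2=(1-\rho^2)^2+\rho^4(1-|v|^2)^2+2\rho^2(1-\rho^2)(1-|v|^2).
\]
The cross term cancels the $\eps^{-2}$ term above. The integration by parts is justified on $\Omega\setminus\bigcup_j S_j(r)$, letting $r\to0$. The sector boundary contributions vanish along a subsequence of radii, since $\nabla\rho_\eps$ and $\rho_\eps$ are in $H^1$ and $1-|v|^2$ is bounded.

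\textbf{Equation for $v_\eps$.} The identity \eqref{eq:LM-main} holds for every $\rho_\eps v$ with $|v|$ admissible and with trace $g_{\rm ph}$. Consider compactly supported variations $v_\eps+t\phi$, which correspond to $U_\eps+t\rho_\eps\phi$. Then
\[
\frac{d}{dt}F_\eps(v_\eps+t\phi)\Big|_{t=0}=\frac{d}{dt}E_\eps(U_\eps+t\rho_\eps\phi)\Big|_{t=0}=0 .
\]
Hence $v_\eps$ is a critical point of $F_\eps$, and \eqref{eq:reduced-equation-main} follows. The same equation can also be checked directly by substituting $U=\rho v$ into \eqref{eq:GL-EL} and using \eqref{eq:scalar-profile}.
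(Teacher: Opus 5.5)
Your proposal is correct and follows the paper's route. The paper gets strict positivity from $\rho_\eps\ge h>0$ in Lemma~\ref{pf:lem:scalar}(ii), proves $|U_\eps|\le\rho_\eps$ by Kato's inequality plus the scalar comparison, derives the splitting by substituting $U_\eps=\rho_\eps v_\eps$ and integrating by parts on sector-truncated domains, and obtains the reduced equation as the first variation. You supply more detail than the paper does (the explicit Brezis--Oswald test functions and the capacity cutoff at $V$), and only one step should be tidied: the vanishing of the arc terms needs $\nabla\rho_\eps\in L^2$ and $\rho_\eps\le1$, not $\nabla\rho_\eps\in H^1$, since then $\int_{\partial S_j(r)}\rho_\eps|\nabla\rho_\eps|\,ds\le C r^{1/2}\|\nabla\rho_\eps\|_{L^2(\partial S_j(r))}\to0$ along radii where $r\|\nabla\rho_\eps\|^2_{L^2(\partial S_j(r))}\to0$.
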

\begin{proof}
Existence, uniqueness, the scalar equation, and strict interior positivity
follow from Lemma~\ref{pf:lem:scalar}.  Following the standard
Kato comparison \cite{Kato1972}, Kato's inequality applied to
\eqref{eq:GL-EL}, followed by the scalar comparison argument, gives
$|U_\eps|\leq\rho_\eps$ almost everywhere in $\Omega$.  Since the proof of
Lemma~\ref{pf:lem:scalar}(ii) gives $\rho_\eps\geq h>0$ in $\Omega$, the quotient
$v_\eps=U_\eps/\rho_\eps$ is well defined and satisfies $|v_\eps|\leq1$.
On the physical boundary away from $V$ its trace is $g_{\rm ph}$.  No quotient trace is
prescribed at a vertex, where $\rho_b=0$.

The equations in \eqref{eq:LM-main}--\eqref{eq:reduced-energy-main} are obtained by substituting $U_{\eps} = \rho_{\eps}v_{\eps}$ into $E_{\eps}(U_{\eps})$.  On the open sides the
boundary term vanishes because $|v|=1$; at the vertices one first removes
small sectors and then lets their radii tend to zero.  This is the weighted
Lassoued--Mironescu splitting \cite{LM,ASzeros}.  Taking its first variation
at $v_\eps$ gives \eqref{eq:reduced-equation-main}, completing the proof.
\end{proof}

\begin{definition}[Reduced admissibility]
\label{def:reduced-admissible-main}
A family $U_\eps$ is reduced admissible when $v_\eps$ solves
\eqref{eq:reduced-equation-main} and
\begin{equation}
  F_\eps(v_\eps)\le C\ln\frac1\eps
  \label{eq:reduced-log-bound-main}
\end{equation}
with $C$ independent of $\eps$.  We additionally require
vertex logarithmic tightness: for every $j$ there are $r_0>0$ and $C_j>0$
such that
\begin{equation}
  \int_{S_j(r_0)\setminus S_j(r)}
  \rho_\eps^2|\nabla v_\eps|^2\,\dd x
  \leq C_j\left(1+\ln\frac{r_0}{r}\right),
  \qquad \eps\leq r<r_0.
  \label{eq:vertex-log-tightness}
\end{equation}
\end{definition}

The global
bound \eqref{eq:reduced-log-bound-main} alone controls the total number of
interior cores but does not rule out extra zero-trace negative sector modes
concentrating at a vertex.  Condition \eqref{eq:vertex-log-tightness} is
used to exclude those modes and hence to identify the limit as a
proper canonical map; see Lemma~\ref{pf:lem:vertex-properness}.

\begin{lemma}\label{pf:lem:gradscale}
Let $v_\eps$ solve \eqref{eq:reduced-equation-main}. It satisfies $|v_\eps|\le1$. If $\mathcal K\Subset \overline \Omega\setminus V$ and $\mathcal K$ meets the boundary only on smooth side pieces, then
\[
  \|\nabla v_\eps\|_{L^\infty(\mathcal K\cap \Omega)}\le \frac{C_{\mathcal K}}{\eps}.
\]
\end{lemma}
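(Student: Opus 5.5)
The bound $|v_\eps|\le1$ is already contained in Proposition~\ref{prop:scalar-reduction}, since $|U_\eps|\le\rho_\eps$. So the content of the lemma is the gradient estimate. I will not work with $v_\eps$ directly. Instead I will estimate $\nabla U_\eps$ and $\nabla\rho_\eps$ separately on $\mathcal K$, and then recover $\nabla v_\eps$ from the quotient rule
\[
  \nabla v_\eps=\frac{\nabla U_\eps}{\rho_\eps}-\frac{U_\eps\nabla\rho_\eps}{\rho_\eps^2}.
\]
Lemma~\ref{pf:lem:scalar}(ii) gives $\rho_\eps\ge c_{\mathcal K'}>0$ on a slightly larger compact set $\mathcal K'$ of the same type. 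Combined with $|U_\eps|\le\rho_\eps\le1$, this shows it suffices to prove
\[
  \|\nabla U_\eps\|_{L^\infty(\mathcal K)}+\|\nabla\rho_\eps\|_{L^\infty(\mathcal K)}\le C/\eps .
\]

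For the two gradient bounds I will use the standard blow-up scaling. Fix $x_0\in\mathcal K\cap\Omega$ and set $\tilde U(y)=U_\eps(x_0+\eps y)$ and $\tilde\rho(y)=\rho_\eps(x_0+\eps y)$. Both solve $-\Delta w=w(1-|w|^2)$ on the rescaled domain, and both satisfy $|w|\le1$, so the right-hand side is bounded by $1$.

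There are two cases, depending on $d=\operatorname{dist}(x_0,\partial\Omega)$.
\begin{itemize}
\item If $d\ge\eps$, interior $W^{2,p}$ estimates on the unit disc give $|\nabla\tilde U(0)|\le C$, and likewise for $\tilde\rho$.
\item If $d<\eps$, the nearest boundary point lies on a compact subarc of an open $C^2$ side $\Gamma_i$, at positive distance from $V$ because $\mathcal K\Subset\overline\Omega\setminus V$. There I flatten the boundary by a $C^2$ chart. After rescaling, this gives a half-disc of radius $2$ with uniformly $C^{1,\alpha}$-controlled metric coefficients (they improve as $\eps\to0$). The Dirichlet data are $g(x_0+\eps y)$ and $\rho_b(x_0+\eps y)$. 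These are $C^2$ on the side, and their rescaled $C^{1,\alpha}$ norms are $O(1)$, in fact their derivatives are $O(\eps)$. Boundary $W^{2,p}$ and Schauder estimates, with $p>2$ and Morrey's embedding, then give $|\nabla\tilde U|\le C$ on the half-disc of radius $1$, and likewise for $\tilde\rho$.
\end{itemize}
Scaling back gives $|\nabla U_\eps(x_0)|,|\nabla\rho_\eps(x_0)|\le C/\eps$. The constants depend only on $\mathcal K$, through the $C^2$ norms of the side pieces, of $g$ and of $\rho_b$ on the relevant subarcs, and through $\operatorname{dist}(\mathcal K,V)$.

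The main obstacle is keeping the constants uniform near the boundary. The flattening charts must be chosen with a fixed size on the subarcs containing $\mathcal K\cap\partial\Omega$; this is possible exactly because these subarcs avoid the vertices, where the $C^2$ structure and the positivity of $\rho_b$ both degenerate. One can also note that $\rho_b$ is bounded below on these subarcs, but the argument does not need this, since the estimate on $\rho_\eps$ is carried out for its own equation.

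An alternative would be to work directly with \eqref{eq:reduced-equation-main}, which is uniformly elliptic on $\mathcal K'$ because $\rho_\eps^2\ge c^2$. That route, however, requires control of $\nabla\rho_\eps$ in the coefficients, which is again the $O(1/\eps)$ bound above. So the decomposition through $U_\eps$ and $\rho_\eps$ is the cleaner route.
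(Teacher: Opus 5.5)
Your proof is correct, but it is organized differently from the paper's.

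The paper works directly with the weighted equation \eqref{eq:reduced-equation-main}. It observes that $\rho_\eps^2$ is uniformly elliptic away from $V$ by Lemma~\ref{pf:lem:scalar}(ii) and that the right-hand side is $O(\eps^{-2})$. It then applies rescaled interior/flattened-boundary $C^{1,\alpha}$ estimates to $v_\eps$ itself.

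You instead estimate $U_\eps$ and $\rho_\eps$ separately. After blow-up at scale $\eps$ both solve the unweighted equation $-\Delta w=w(1-|w|^2)$ with $|w|\le1$. You then recover $\nabla v_\eps$ from the quotient rule, using $\rho_\eps\ge c_{\mathcal K}$ and $|U_\eps|\le\rho_\eps\le1$.

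The advantage of your route is that the weight never appears inside an elliptic operator, so the $U_\eps$ estimate needs no regularity of $\rho_\eps^2$. As you point out, the paper's one-line argument tacitly needs more than uniform ellipticity. A $C^{1,\alpha}$ bound for $-\operatorname{div}(a\nabla v)=f$ requires H\"older control of the rescaled coefficient $\rho_\eps^2(x_0+\eps\,\cdot)$, essentially $\eps|\nabla\rho_\eps|\le C$. Bounded measurable coefficients give only $C^{0,\alpha}$. The paper proves that scaled gradient bound only afterwards, in Lemma~\ref{pf:lem:scalar-fixed-truncation}, by exactly the blow-up you use for $\rho_\eps$. That proof does not use the present lemma, so there is no circularity. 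Your version is therefore the self-contained one and makes explicit an input the paper leaves implicit; the paper's is shorter and stays with the object of interest.

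Two harmless details:
\begin{enumerate}
\item With only a $C^2$ flattening chart, the pulled-back coefficients are $C^1$. After rescaling they are uniformly Lipschitz (with constant $O(\eps)$) rather than $C^{1,\alpha}$-controlled. That is all the boundary $W^{2,p}$ estimate requires.
\item Take the half-discs large enough (say radii $4$ and $2$ about the nearest boundary point) that the rescaled preimage of $x_0$ lies in the inner one despite the distortion of the chart.
\end{enumerate}
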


\begin{proof}
The bound $|v_\eps|\le1$ is Proposition~\ref{prop:scalar-reduction}. Away from the vertex set $V$, the coefficient $\rho_\eps^2$ is uniformly elliptic according to Lemma~\ref{pf:lem:scalar}(ii). Thus \eqref{eq:reduced-equation-main} is a uniformly elliptic GL-type system on $\mathcal K$, with right-hand side bounded by $C_{\mathcal K}\eps^{-2}$. The standard interior or flattened-boundary $C^{1,\alpha}$ estimate after rescaling balls of radius $\eps$ gives $\|\nabla v_\eps\|_{L^\infty(\mathcal K\cap \Omega)}\le C_{\mathcal K}/\eps$ \cite{Evans}.
\end{proof}
\subsection{Fixed truncations and the potential estimate}
\begin{figure}
    \centering
    \includegraphics[width=0.35\linewidth]{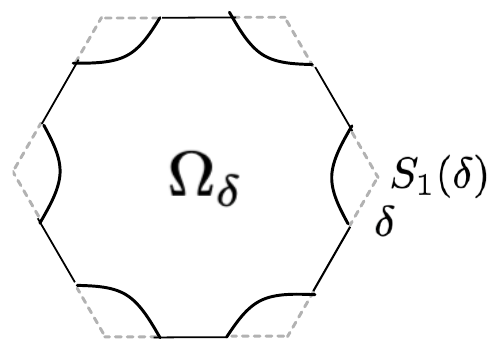}
    \caption{Fixed truncations by removing the vertex sectors
\(S_j(\delta)\) gives \(\Omega_\delta\).}
    \label{fig:Omega_delta}
\end{figure}
For fixed $\delta>0$, remove mutually disjoint vertex sectors and set
\begin{equation}
  \Omega_\delta
  :=\Omega\setminus\bigcup_{j=1}^KS_j(\delta).
  \label{eq:fixed-truncation-main}
\end{equation}
The quotient equation in \eqref{eq:reduced-equation-main} is degenerate only at the vertices.  Removing fixed
sectors restores the uniform ellipticity needed by the local BBH
compactness argument.


\begin{lemma}[Scalar profile on fixed truncations]\label{pf:lem:scalar-fixed-truncation}
For every fixed \(\delta>0\) there are constants
\(c_\delta,C_\delta>0\) such that, for all sufficiently small \(\eps\),
\begin{equation}\label{pf:eq:rho-fixed-lower}
  c_\delta\le \rho_\eps\le1
  \quad\text{on }\Omega_\delta,
\end{equation}
and
\begin{equation}
\label{pf:eq:rho-fixed-gradient}
\eps
\|\nabla\rho_\eps\|_{L^\infty(\Omega_\delta)}
\leq C_\delta.
\end{equation}
Consequently the coefficients
\[
  a_\eps:=\rho_\eps^2,
  \qquad
  b_\eps:=\rho_\eps^4
\]
satisfy, 
\begin{equation}
\label{pf:eq:coeff-fixed-ellipticity}
0<c_\delta^2\leq a_\eps\leq 1,
\qquad
0<c_\delta^4\leq b_\eps\leq 1
\qquad\text{in }\Omega_\delta,
\end{equation}
and
\begin{equation}
\label{pf:eq:coeff-fixed-gradient}
\eps
\left(
\|\nabla a_\eps\|_{L^\infty(\Omega_\delta)}
+
\|\nabla b_\eps\|_{L^\infty(\Omega_\delta)}
\right)
\leq C_\delta.
\end{equation}
\end{lemma}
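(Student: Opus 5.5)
The plan is to obtain the lower bound \eqref{pf:eq:rho-fixed-lower} directly from Lemma~\ref{pf:lem:scalar}, and then to prove the gradient bound \eqref{pf:eq:rho-fixed-gradient} by an $\eps$-scale blow-up combined with elliptic estimates. The coefficient bounds will follow by the chain rule.

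\textbf{Lower bound.} Fix $\delta>0$ and take $\mathcal K=\overline{\Omega_\delta}$. This set is compact and stays at distance $\gtrsim\delta$ from $V$. Its intersection with $\partial\Omega$ lies in compact subarcs of the open sides $\Gamma_i$, provided the sectors $S_j(\delta)$ are chosen so that their arcs $\partial S_j(\delta)\cap\Omega$ meet the sides away from the endpoints; this holds by construction. Lemma~\ref{pf:lem:scalar}(ii) then gives $\rho_\eps\ge h\ge c_\delta$ on $\Omega_\delta$, and Lemma~\ref{pf:lem:scalar}(i) gives $\rho_\eps\le1$. I will note that in fact $\rho_\eps\ge h$ holds for every $\eps$, so "sufficiently small'' is only needed for the gradient bound.

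\textbf{Gradient bound.} Enlarge slightly to $\Omega_{\delta/2}\supset\Omega_\delta$, on which the same reasoning applies. Take $x_0\in\Omega_\delta$ and set $\tilde\rho(y)=\rho_\eps(x_0+\eps y)$. Then
\[
-\Delta\tilde\rho=\tilde\rho(1-\tilde\rho^2),\qquad 0\le\tilde\rho\le1 .
\]
I will split into two cases according to the distance from $x_0$ to $\partial\Omega$.
\begin{enumerate}[label=(\alph*)]
\item If $\operatorname{dist}(x_0,\partial\Omega)\ge\eps$, interior $W^{2,p}$ estimates on $B_1(0)$ bound $|\nabla\tilde\rho(0)|$ by a universal constant, because both $\tilde\rho$ and the right-hand side are bounded by $1$.
\item Otherwise $x_0$ lies within $\eps$ of a smooth side piece $\Gamma_i\cap\overline{\Omega_{\delta/2}}$. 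I will flatten $\Gamma_i$ with a $C^2$ chart whose bounds are uniform on this compact subarc; this uses that the arcs are $C^2$ up to the region considered. After rescaling, the boundary data $\tilde\rho_b(y)=\rho_b(x_0+\eps y)$ has $C^{1,\alpha}$ norm bounded by $\|\rho_b\|_{C^2}$ on that subarc, uniformly in $\eps\le1$, since rescaling only shrinks derivatives. Boundary $W^{2,p}$ / $C^{1,\alpha}$ estimates on the rescaled half-ball then bound $|\nabla\tilde\rho|$ near $0$.
\end{enumerate}
Scaling back gives $\eps|\nabla\rho_\eps(x_0)|\le C_\delta$. This is the same argument as in Lemma~\ref{pf:lem:gradscale}.

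\textbf{Main obstacle.} The step I expect to need the most care is uniformity near the points where $\partial S_j(\delta)$ meets $\partial\Omega$. Those points are boundary points of $\Omega_\delta$ on the sides, and there one needs the flattened-boundary estimate with constants depending only on $\delta$. Working in the slightly larger set $\Omega_{\delta/2}$ handles this: every $x_0\in\Omega_\delta$ then has an $\eps$-neighbourhood, for $\eps<\delta/4$, that lies in a region covered by the uniform charts. The estimate is local, so no information near the vertices is required.

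\textbf{Coefficients.} The bounds \eqref{pf:eq:coeff-fixed-ellipticity} follow by squaring and taking fourth powers of $c_\delta\le\rho_\eps\le1$. For the gradients, $\nabla a_\eps=2\rho_\eps\nabla\rho_\eps$ and $\nabla b_\eps=4\rho_\eps^3\nabla\rho_\eps$. Since $0\le\rho_\eps\le 1$, this gives $\eps(|\nabla a_\eps|+|\nabla b_\eps|)\le 6\,\eps|\nabla\rho_\eps|\le 6C_\delta$. Renaming constants yields \eqref{pf:eq:coeff-fixed-gradient}.
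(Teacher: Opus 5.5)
Your proposal is correct and follows essentially the same route as the paper. The lower bound is read off from Lemma~\ref{pf:lem:scalar}(ii), the gradient bound comes from rescaling at scale $\eps$ and applying interior and flattened-boundary $W^{2,p}$ estimates with uniformly bounded rescaled data, and the coefficient bounds follow from the chain rule with the same factor $6$. Your split according to whether $\operatorname{dist}(x_0,\partial\Omega)\ge\eps$ is a slightly cleaner way to organize the paper's finite chart covering, because it makes explicit that points at intermediate distance from the sides are handled by the universal interior estimate.
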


\begin{proof}
The lower bound \eqref{pf:eq:rho-fixed-lower} follows from
Lemma~\ref{pf:lem:scalar}(ii).

We prove the derivative estimates.  First let
\(B_{2r}(x_0)\Subset \Omega\setminus V\).  For
\(x\in B_r(x_0)\) and for \(\eps\) small enough so that
\(x+\eps B_2\subset B_{2r}(x_0)\), define
\[
  \widetilde\rho(y)=\rho_\eps(x+\eps y),
  \qquad y\in B_2 .
\]
Substituting the above equation into \eqref{eq:scalar-profile}, then
\[
  -\Delta_y\widetilde\rho
  =
  \widetilde\rho(1-\widetilde\rho^2)
  \quad\text{in }B_2,
\]
and, by Lemma~\ref{pf:lem:scalar}(i),
\[
  0\le \widetilde\rho\le1 .
\]
Interior \(W^{2,p}\) estimates applied on \(B_2\), followed by the
Sobolev embedding \(W^{2,p}(B_1)\hookrightarrow C^{1,\alpha}(B_1)\)
for \(p>2\), give
\[
  \|\nabla_y\widetilde\rho\|_{L^\infty(B_1)}
  \le C.
\]
Scaling back gives
\[
  \eps|\nabla_x\rho_\eps(x)|\le C\qquad in\ B_{2r}(x_0).
\]

We now consider $U$ which satisfies $U\cap\partial \Omega\neq\emptyset$ and away from \(V\).  Let
\[
  \Psi:B^+_{2r}\to \Omega\cap U,
  \qquad
  B^+_{2r}:=\{y=(y_1,y_2): |y|<2r,\ y_2>0\},
\]
be a \(C^2\) flattening chart such that
\[
  \Psi(\{y_2=0\})=\partial \Omega\cap U.
\]

(A local chart means a local coordinate
representation of the domain by a standard disc or half-disc.
At an interior point, the chart is simply an ordinary ball
\[
B_{2r}(x_0)\Subset \Omega,
\]
and no change of coordinates is required. Near a smooth portion of
the physical boundary, a boundary chart is given by a $C^2$
diffeomorphism
\[
\Psi:B_{2r}^+\to \Omega\cap U
\]
such that
\[
\Psi(\{y_2=0\}\cap B_{2r})
=
\partial \Omega\cap U.
\]
Thus, in the boundary chart, the physical boundary is represented by
the flat part of a half-disc.)

Set
\[
  R_\eps(y)=\rho_\eps(\Psi(y)).
\]
Let
\[
  J(y)=|\det D\Psi(y)|,\qquad
  A(y)=J(y)D\Psi(y)^{-1}D\Psi(y)^{-T}.
\]
Substituting the above equations into \eqref{eq:scalar-profile}, \(R_\eps\) satisfies
\[
  -\operatorname{div}_y(A(y)\nabla_y R_\eps)
  =
  \eps^{-2}J(y)R_\eps(1-R_\eps^2)
  \quad\text{in }B^+_{2r},
\]
with boundary value
\[
  R_\eps(y_1,0)=\rho_b(\Psi(y_1,0)).
\]
Fix \(y_0=(s_0,0)\) on the flat boundary and rescale
\[
  \widetilde R_\eps(z)
  =
  R_\eps(y_0+\eps z),
  \qquad z\in B^+_2 .
\]
Then
\[
  -\operatorname{div}_z(A_\eps(z)\nabla_z\widetilde R_\eps)
  =
  J_\eps(z)\widetilde R_\eps(1-\widetilde R_\eps^2),
\]
where
\[
  A_\eps(z)=A(y_0+\eps z),
  \qquad
  J_\eps(z)=J(y_0+\eps z).
\]
The matrices \(A_\eps\) are uniformly elliptic and uniformly \(C^{0,1}\)
on \(B^+_2\), with constants depending only on the fixed side chart.
The boundary value is
\[
  \widetilde R_\eps(z_1,0)
  =
  \rho_b(\Psi(s_0+\eps z_1,0)).
\]
Since \(\rho_b\) is \(C^2\) on the side piece,
\[
  \|\widetilde R_\eps(\cdot,0)\|_{W^{2,p}(-2,2)}
  \le C_p .
\]
Boundary \(W^{2,p}\) estimates for uniformly elliptic divergence-form
equations with \(C^{0,1}\) coefficients give
\[
  \|\widetilde R_\eps\|_{W^{2,p}(B^+_1)}
  \le C_p .
\]
Taking \(p>2\) and using Sobolev embedding gives
\[
  \|\nabla_z\widetilde R_\eps\|_{L^\infty(B^+_1)}
  \le C.
\]
Scaling back yields
\[
  \eps|\nabla_x\rho_\eps|\le C\qquad in\ \Omega\cap U
\]

Thus the scaled gradient estimate holds on every smaller boundary
patch. Since the covering is finite, taking the maximum of the
finitely many local constants proves
\[
\eps
\|\nabla\rho_\eps\|_{L^\infty(\Omega_\delta)}
\leq C_\delta,
\]
which is \eqref{pf:eq:rho-fixed-gradient}.

Finally,
\[
\nabla a_\eps
=
2\rho_\eps\nabla\rho_\eps,
\qquad
\nabla b_\eps
=
4\rho_\eps^3\nabla\rho_\eps.
\]
Using $0\leq\rho_\eps\leq 1$ and
\eqref{pf:eq:rho-fixed-gradient}, we obtain
\[
\eps
\left(
\|\nabla a_\eps\|_{L^\infty(\Omega_\delta)}
+
\|\nabla b_\eps\|_{L^\infty(\Omega_\delta)}
\right)
\leq
6\eps
\|\nabla\rho_\eps\|_{L^\infty(\Omega_\delta)}
\leq C_\delta.
\]
The ellipticity bounds in \eqref{pf:eq:coeff-fixed-ellipticity} follow
directly from \eqref{pf:eq:rho-fixed-lower}.
\end{proof}

We use the following fixed-truncation version of the local estimate of Bethuel--Orlandi--Smets (BOS).  Their theorem is local and independent of boundary conditions for the standard GL equation; in dimension two it implies that a logarithmic energy bound gives an $O(1)$ local potential bound.  The proposition below records the uniformly elliptic, coefficient-perturbed form used in the proof of Proposition~\ref{prop:fixed-truncation-main}.
\begin{proposition}[Weighted BOS estimate on fixed uniformly elliptic charts]
\label{pf:prop:weighted-bos}
Let \(D\) be either the unit disc \(B_1\) or the upper half-disc
\(B^+_1\).  Let \(D'\Subset D\cup(\partial D\cap\{x_2=0\})\) be a
concentric smaller disc or half-disc.  Let \(w_\eps:D\to\mathbb C\)
satisfy \(|w_\eps|\le1\) and
\begin{equation}\label{pf:eq:weighted-bos-eqn}
  -\operatorname{div}(A_\eps\nabla w_\eps)
  =
  \eps^{-2}B_\eps w_\eps(1-|w_\eps|^2)
  +
  \mathcal R_\eps
  \quad\text{in }D,
\end{equation}
where \(A_\eps(x)\) is a real symmetric \(2\times2\) matrix and
\(B_\eps(x)\) is scalar.  Assume that
\begin{equation}\label{pf:eq:coeff-bos}
  \lambda_{\rm ell}|\xi|^2
  \le
  A_\eps(x)\xi\cdot\xi
  \le
  \Lambda_{\rm ell}|\xi|^2,
  \qquad
  \lambda_{\rm ell}\le B_\eps(x)\le\Lambda_{\rm ell},
\end{equation}
and
\begin{equation}\label{pf:eq:coeff-bos-derivative}
  \eps\|\nabla A_\eps\|_{L^\infty(D)}
  +
  \eps\|\nabla B_\eps\|_{L^\infty(D)}
  \le\Lambda_{\rm ell} .
\end{equation}
In the half-disc case assume that \(w_\eps=1\) on the flat part of the
boundary.  The perturbation \(\mathcal R_\eps\) is allowed to be any
first-order term produced by a \(C^2\) boundary gauge.  More precisely,
we assume that, for every test function \(\zeta\),
\begin{equation}\label{pf:eq:Rbound}
  |\langle\mathcal R_\eps,\zeta\rangle|
  \le
  \Lambda_{\rm ell}
  \int_D
  \bigl(
  |\nabla w_\eps|\,|\zeta|
  +
  |\zeta|
  +
  |\nabla\zeta|
  \bigr).
\end{equation}
If
\begin{equation}\label{pf:eq:bos-log-energy}
  \int_D |\nabla w_\eps|^2
  +
  \frac1{\eps^2}\int_D(1-|w_\eps|^2)^2
  \le
  M\ln\frac1\eps,
\end{equation}
then
\begin{equation}\label{pf:eq:bos-potential-conclusion}
  \frac1{\eps^2}
  \int_{D'}(1-|w_\eps|^2)^2
  \le C,
\end{equation}
where \(C\) depends only on \(D'\), \(M\), \(\lambda_{\rm ell}\), and \(\Lambda_{\rm ell}\).
\end{proposition}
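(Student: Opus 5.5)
The plan is to follow the structure of the Bethuel--Orlandi--Smets argument. The coefficients \(A_\eps,B_\eps\) enter only through divergence-form identities and through regularity at scale \(\eps\), and at scale \(\eps\) hypothesis \eqref{pf:eq:coeff-bos-derivative} says precisely that they are uniformly Lipschitz. For this reason I will avoid any Pohozaev or stress--energy identity that differentiates the coefficients along \(x\cdot\nabla\): such terms are of size \(\eps^{-1}\int|x||\nabla w_\eps|^2\) and are not controlled. Everything will instead use the equation tested against \(w_\eps\) and against cutoffs, plus the conservation law for the current.

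\emph{Step 1 (scale-\(\eps\) regularity).} For \(x_0\in D'\), rescale \(\widetilde w(z)=w_\eps(x_0+\eps z)\). Then \(\widetilde A(z)=A_\eps(x_0+\eps z)\) and \(\widetilde B\) are uniformly elliptic and uniformly \(C^{0,1}\) by \eqref{pf:eq:coeff-bos}--\eqref{pf:eq:coeff-bos-derivative}. The remainder becomes \(\eps\)-small in the sense of \eqref{pf:eq:Rbound}, and in the half-disc case \(\widetilde w=1\) on the flat part. Interior and flat-boundary \(W^{2,p}\) estimates, exactly as in Lemma~\ref{pf:lem:scalar-fixed-truncation}, then give \(|\nabla w_\eps|\le C/\eps\) on a slightly smaller set. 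They also give the clearing-out property: if \(\eps^{-2}\int_{B_{\eps}(x_0)}(1-|w_\eps|^2)^2\) is below a threshold \(\eta_0\), then \(|w_\eps|\ge\frac12\) on \(B_{\eps/2}(x_0)\).

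\emph{Step 2 (modulus identity).} Test \eqref{pf:eq:weighted-bos-eqn} against \(\chi w_\eps\), where \(\chi\) is a cutoff equal to \(1\) on \(D'\). This gives
\[
\eps^{-2}\int\chi B_\eps|w_\eps|^2(1-|w_\eps|^2)=\int\chi A_\eps\nabla w_\eps\cdot\nabla w_\eps+\int A_\eps\nabla\tfrac{|w_\eps|^2}{2}\cdot\nabla\chi-\langle\mathcal R_\eps,\chi w_\eps\rangle .
\]
Testing instead against \(\chi(1-|w_\eps|^2)w_\eps\) gives the sharper identity
\[
\eps^{-2}\int\chi B_\eps|w_\eps|^2(1-|w_\eps|^2)^2\lesssim\int\chi(1-|w_\eps|^2)|\nabla w_\eps|^2+\text{(boundary and }\mathcal R_\eps\text{ terms)} .
\]
The \(\mathcal R_\eps\) and \(\nabla\chi\) terms are \(O(1)+O((\ln\frac1\eps)^{1/2})\) by \eqref{pf:eq:Rbound}, Cauchy--Schwarz and \eqref{pf:eq:bos-log-energy}. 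I will absorb them using that \(|w_\eps|\ge\frac12\) away from a controlled bad set, which is handled in Step 3. The proof is thereby reduced to the \emph{key bound}
\[
\int_{D''}(1-|w_\eps|^2)|\nabla w_\eps|^2\le C
\]
on an intermediate set \(D'\Subset D''\Subset D\).

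\emph{Step 3 (the main obstacle: the key bound).} This is where BOS use their improved \(L^p\) gradient estimate, and I expect it to be the hardest step. First I will run the ball construction (Jerrard/Sandier) on the \(\eps\)-bad set, that is, the points where \(|w_\eps|<\frac12\). Steps 1 and the log bound \eqref{pf:eq:bos-log-energy} show this set is covered by \(\le C(M)\) balls of total radius \(\le\eps^{1/2}\), with bounded total degree. Next consider the current \(j_\eps=w_\eps\times A_\eps\nabla w_\eps\). By the equation, \(\operatorname{div}j_\eps=w_\eps\times\mathcal R_\eps\), which is controlled in \(W^{-1,p}\) by \eqref{pf:eq:Rbound}. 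Its curl, after dividing by \(A_\eps\) (which I handle by working with \(A_\eps^{-1}j_\eps\)), is controlled by the Jacobian. The Jacobian is a sum of \(C(M)\) Dirac masses up to an error small in \((C^{0,1})^*\). A Hodge decomposition therefore gives \(\|\nabla w_\eps\|_{L^{p}(D'')}\le C_p\) for every \(p<2\), on the set where \(|w_\eps|\ge\frac12\).

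The difficulty is that the Hodge decomposition with merely measurable coefficients \(A_\eps^{-1}\) (they oscillate at scale \(\eps\)) only gives Meyers-type \(L^p\) bounds near \(p=2\). Fortunately that is all that is needed here, since one only needs some \(p\) slightly below \(2\). Finally I will combine this with the pointwise bounds \(1-|w_\eps|^2\le1\), \(|\nabla w_\eps|\le C/\eps\), and \(\eps^{-2}\int(1-|w_\eps|^2)^2\le M\ln\frac1\eps\). Splitting the region dyadically in the size of \(1-|w_\eps|^2\), Hölder's inequality yields the key bound with a constant depending only on \(M,\lambda_{\rm ell},\Lambda_{\rm ell},D'\), and hence \eqref{pf:eq:bos-potential-conclusion}. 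The contribution of the \(\le C(M)\) bad \(\eps\)-balls is \(O(1)\) directly by Step 1.
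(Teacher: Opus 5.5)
There is a genuine gap. It sits at the end of Step~3 and in the two places where you assume a bounded number of bad balls.

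\textbf{The key bound cannot come from interpolation.} Your final deduction uses only four inputs: $\|\nabla w_\eps\|_{L^p(D'')}\le C_p$ for some $p<2$, $|\nabla w_\eps|\le C/\eps$, $0\le 1-|w_\eps|^2\le1$, and $\eps^{-2}\int(1-|w_\eps|^2)^2\le M\ln\frac1\eps$. These do not imply the key bound. Take $w=1-\frac12\sum_{k=1}^n\phi((x-x_k)/\eps)$ with $\phi\in C^\infty_c(B_1)$, $0\le\phi\le1$, $\phi\not\equiv0$, $n\approx\ln\frac1\eps$, and disjoint balls $B_\eps(x_k)\subset D''$. Then:
\begin{itemize}
\item $\frac12\le|w|\le1$ and $w$ is real, so there is no degree and the Jacobian vanishes;
\item $|\nabla w|\le C/\eps$ and $\int|\nabla w|^p\approx n\eps^{2-p}\to0$;
\item $\int|\nabla w|^2\approx n$ and $\eps^{-2}\int(1-|w|^2)^2\approx n$;
\item yet $\int(1-|w|^2)|\nabla w|^2\approx n\approx\ln\frac1\eps$.
\end{itemize}

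\textbf{Consequences for the rest of the proposal.}
\begin{itemize}
\item The same degree-zero $\eps$-cores show that ``covered by $\le C(M)$ balls'' is unfounded. Vitali plus the logarithmic bound gives only $O(M\ln\frac1\eps)$ balls of radius $O(\eps)$. The Jerrard--Sandier construction bounds $\sum|d_i|$, not the number of balls.
\item Hence the claim that the bad balls contribute $O(1)$ ``directly by Step~1'' presupposes the conclusion.
\item The key bound is of the same strength as \eqref{pf:eq:bos-potential-conclusion}, since each vortex core contributes order one to both quantities. So Step~2 only restates the problem.
\item The div--curl/Hodge argument controls the current $w_\eps\times\nabla w_\eps$ but not $\nabla|w_\eps|$. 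It therefore does not even give the full $L^p$ gradient bound you feed into the interpolation.
\end{itemize}

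\textbf{What is missing, and how the paper proceeds.} You need a PDE identity in which the potential appears with a favourable sign and is bounded by an $O(1)$ quantity. Only such an identity can exclude many degree-zero cores for genuine solutions. In the Bethuel--Orlandi--Smets argument that the paper invokes, this role is played by the Pohozaev/monotonicity identity on a suitably chosen circle, with the $L^p$ estimate used to choose the circle. In good regions it is played by the modulus-equation decay of Corollary~\ref{pf:prop:weighted-bos-good}. The paper's proof transfers BOS Theorem~1.1 directly:
\begin{itemize}
\item it rescales to the GL scale, so that \eqref{pf:eq:coeff-bos-derivative} makes the coefficients uniformly Lipschitz;
\item it absorbs $\mathcal R_\eps$ by Cauchy's inequality;
\item it treats the half-disc case by reflection across the flat boundary.
\end{itemize}

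Your objection to Pohozaev is correct. With coefficients that are only Lipschitz at scale $\eps$, that identity produces errors of order $\eps^{-1}\int|x||\nabla w_\eps|^2$ and $\eps^{-3}\int|x|(1-|w_\eps|^2)^2$. The paper asserts rather than verifies that such coefficient terms are harmless. But discarding the identity without a replacement leaves your argument with no mechanism for counting cores, so it does not prove the statement. A repair needs one of the following:
\begin{itemize}
\item a weighted Pohozaev-type identity whose coefficient-derivative terms are controlled, possibly under a stronger hypothesis on $\nabla A_\eps$ and $\nabla B_\eps$ away from a thin set;
\item another argument that uses the equation to rule out $\gg1$ degree-zero cores.
\end{itemize}
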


\begin{proof}
When \(A_\eps=I\), \(B_\eps=1\), and \(\mathcal R_\eps=0\), this is
Theorem~1.1 of Bethuel--Orlandi--Smets \cite[Theorem~1.1]{BOS}.  In
dimension two, their estimate reads, on \(B_1\),
\[
  \int_{B_{1/2}}
  \frac{(1-|u_\eps|^2)^2}{4\eps^2}
  \le
  C_0
  \frac{E_\eps(u_\eps;B_1)}{|\ln\eps|}
  \ln\left(
  2+
  \frac{E_\eps(u_\eps;B_1)}{|\ln\eps|}
  \right).
\]
Thus a logarithmic energy bound implies an \(O(1)\) bound for the
local potential.  The same paper also records the corresponding
covering and scaling form \cite[Corollary~1.4]{BOS}.

We use this estimate in a uniformly elliptic perturbative form.  The
assumptions \eqref{pf:eq:coeff-bos} imply that the kinetic energy associated
with \(A_\eps\) is equivalent to the Euclidean kinetic energy:
\[
  \lambda_{\rm ell}\int_D|\nabla w_\eps|^2
  \le
  \int_D A_\eps\nabla w_\eps\cdot\nabla w_\eps
  \le
  \Lambda_{\rm ell}\int_D|\nabla w_\eps|^2.
\]
The coefficient derivatives enter the local estimates only after
rescaling to the GL scale.  Indeed, on a ball \(B_\eps(x_0)\), set
\(x=x_0+\eps y\).  Then
\[
  \nabla_y A_\eps(x_0+\eps y)
  =
  \eps\,\nabla_x A_\eps(x_0+\eps y),
\]
and similarly for \(B_\eps\).  Hence
\eqref{pf:eq:coeff-bos-derivative} gives uniformly bounded coefficients
in the rescaled equation.  Repeating the local Caccioppoli inequalities,
the modulus equation estimates, and the supercurrent estimates in the
proof of \cite[Theorem~1.1]{BOS}, all coefficient-error terms are bounded
by the same quantities as in the unweighted case, with constants depending
only on \(\lambda_{\rm ell}\) and \(\Lambda_{\rm ell}\).

The perturbation \(\mathcal R_\eps\) satisfies
\eqref{pf:eq:Rbound}.  By Cauchy's inequality, its contribution to the
local energy inequalities is controlled by
\[
  \eta\int_D|\nabla w_\eps|^2
  +
  C_\eta
  \left(1+\int_D|\zeta|^2+\int_D|\nabla\zeta|^2\right),
\]
with \(\eta>0\) chosen small.  Thus it can be absorbed into the
left-hand side of the local estimates, up to a constant depending only
on \(M,\lambda_{\rm ell},\Lambda_{\rm ell}\).

For the half-disc, one subtracts the constant boundary value by writing
\(w_\eps=1+\widetilde w_\eps\) and reflects \(\widetilde w_\eps\) evenly
across the flat boundary.  The reflected coefficient matrix remains
uniformly elliptic and satisfies the same scaled derivative bounds.
The reflected equation has the same form as
\eqref{pf:eq:weighted-bos-eqn}, with a perturbation still satisfying
\eqref{pf:eq:Rbound}.  Applying the interior estimate to the reflected
disc gives the half-disc estimate.  Finally, scaling gives the statement
for arbitrary smaller charts \(D'\Subset D\).
\end{proof}

\begin{corollary}[Good-region weighted BOS estimate]
\label{pf:prop:weighted-bos-good}
Let \(D\) be either the unit disc \(B_1\) or the upper half-disc
\(B^+_1\).  Let \(D'\Subset D\cup(\partial D\cap\{x_2=0\})\) be a
concentric smaller disc or half-disc.  Let \(w_\eps:D\to\mathbb C\)
satisfy the hypotheses of Proposition~\ref{pf:prop:weighted-bos}.

There exist constants
\[
  \sigma_0\in(0,1/2),\qquad \beta_{\rm dec}>0,\qquad C>0,
\]
depending only on \(D'\), \(\lambda_{\rm ell}\), and \(\Lambda_{\rm ell}\), such that if
\begin{equation}\label{pf:eq:good-region-assumption}
  |w_\eps|\ge1-\sigma_0
  \quad\text{on }D,
\end{equation}
then
\begin{equation}\label{pf:eq:good-region-bos}
  \int_{D'} |\nabla |w_\eps||^2\,dx
  +
  \frac1{\eps^2}
  \int_{D'}(1-|w_\eps|^2)^2\,dx
  \le
  C\eps^{\beta_{\rm dec}}
  \left[
  \int_D|\nabla w_\eps|^2\,dx
  +
  \frac1{\eps^2}\int_D(1-|w_\eps|^2)^2\,dx
  +
  1
  \right].
\end{equation}
In particular, if
\[
  \int_D|\nabla w_\eps|^2\,dx
  +
  \frac1{\eps^2}\int_D(1-|w_\eps|^2)^2\,dx
  \le
  M\ln\frac1\eps ,
\]
then
\[
  \frac1{\eps^2}
  \int_{D'}(1-|w_\eps|^2)^2\,dx
  \le
  C_M\eps^{\beta_{\rm dec}}\ln\frac1\eps
  \rightarrow0 .
\]
\end{corollary}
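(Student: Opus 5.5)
The plan is to write $w_\eps=\rho\,e^{i\phi}$ on $D$. This is legitimate by \eqref{pf:eq:good-region-assumption}: $|w_\eps|\ge 1-\sigma_0>0$, and $D$ is simply connected, so $w_\eps$ has a global $H^1$ lift $\phi$ on $D$. (In the half-disc case we may also normalise $\phi=0$ on the flat part, since $w_\eps=1$ there.) Set $\rho=|w_\eps|$ and $\xi:=1-\rho^2$, so $0\le\xi\le 2\sigma_0$.

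\textbf{Step 1: a damped equation for $\xi$.} Taking the real inner product of \eqref{pf:eq:weighted-bos-eqn} with $w_\eps$ gives
\[
-\operatorname{div}(A_\eps\nabla \xi)+\frac{2B_\eps\rho^2}{\eps^2}\,\xi
=2A_\eps\nabla w_\eps\cdot\nabla w_\eps-2\,w_\eps\cdot\mathcal R_\eps .
\]
Because $\rho^2\ge(1-\sigma_0)^2$ and $B_\eps\ge\lambda_{\rm ell}$, the zeroth-order coefficient is at least $c_0\eps^{-2}$. So $\xi$ solves a uniformly elliptic equation with a strong mass term. The forcing is bounded by $C(|\nabla w_\eps|^2+|\nabla w_\eps|+1)$ in the weak sense of \eqref{pf:eq:Rbound}.

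\textbf{Step 2: localised energy decay.} Multiply the equation by $\zeta^2\xi$, where $\zeta$ is a cutoff. This gives
\[
\int\zeta^2|\nabla\xi|^2+\eps^{-2}\int\zeta^2\xi^2\lesssim\int\zeta^2\xi\,|\nabla w_\eps|^2+\int|\nabla\zeta|^2\xi^2+\cdots .
\]
Two observations control the right-hand side.
\begin{itemize}
\item The modulus part of $|\nabla w_\eps|^2$ is $|\nabla\rho|^2\simeq|\nabla\xi|^2$. Its coefficient $\xi\le2\sigma_0$ is small, so for $\sigma_0$ small this term is absorbed into the left-hand side.
\item The phase part $\rho^2|\nabla\phi|^2$ is not small pointwise. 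The phase satisfies the conservation law $\operatorname{div}(A_\eps\rho^2\nabla\phi)=\mathrm{Im}(\bar w_\eps\mathcal R_\eps)$, so $\phi$ solves an elliptic equation, uniformly elliptic since $\rho$ is bounded below. This would give interior bounds on $|\nabla\phi|$ in terms of $\|\nabla\phi\|_{L^2(D)}$, except that the coefficients vary on the scale $\eps$.
\end{itemize}

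\textbf{Step 3: extracting the power of $\eps$.} I would not try to bound $|\nabla\phi|$ pointwise. Instead I would iterate the Caccioppoli inequality on the dyadic-in-$\eps$ family of shrinking domains $D_k$, interpolating between $D$ and $D'$, with cutoffs of width $\ell$. The mass term is exponentially strong on scales $\gg\eps$, and $\xi$ is forced only by $\xi|\nabla\phi|^2\le 2\sigma_0|\nabla\phi|^2$. Together these give a decay estimate of the form
\[
\mathcal E(D_{k+1})\le \theta\,\mathcal E(D_k)+C\eps^{\gamma},\qquad \theta<1 .
\]
Here $\mathcal E$ is the localised modulus-plus-potential energy, and $\theta$ comes from the small factor $\sigma_0$ against the fixed ellipticity ratio. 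After about $|\ln\eps|$ steps, $\theta^{c|\ln\eps|}=\eps^{\beta_{\rm dec}}$, which gives \eqref{pf:eq:good-region-bos}. This is the standard mechanism behind the BOS "good region" decay, as in \cite{BOS}, adapted as in Proposition~\ref{pf:prop:weighted-bos} using \eqref{pf:eq:coeff-bos-derivative}. The terms coming from $\mathcal R_\eps$ and from $\nabla A_\eps$ are lower order; they are absorbed by Cauchy's inequality, contributing the "$+1$".

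\textbf{Half-disc case.} Use $\xi=0$ on the flat boundary, so the test functions $\zeta^2\xi$ need no boundary term.

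\textbf{Conclusion.} The final statement follows by inserting the bound $M\ln\frac1\eps$ into \eqref{pf:eq:good-region-bos}.

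\textbf{Main obstacle.} The hardest part is Step 3. The phase energy $\int|\nabla\phi|^2$ may itself be of size $\ln\frac1\eps$, and it feeds $\xi$. We need the small factor $\sigma_0$, rather than any smallness of $\nabla\phi$, to produce the geometric contraction $\theta<1$. This has to be made uniform in the weighted coefficients. If it fails, my fallback is Proposition~\ref{pf:prop:weighted-bos} for $O(1)$ control, combined with a Pohozaev-type identity on the $\eps$-rescaled balls.
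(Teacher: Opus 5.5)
Your route differs from the paper's. The paper does not reprove the decay: it imports the flat good-region estimate of Bethuel--Orlandi--Smets (their Proposition~A.4) and only argues that its proof survives coefficients that are uniformly elliptic and Lipschitz at scale $\eps$, the perturbation $\mathcal R_\eps$, and even reflection. Rebuilding the mechanism from the modulus equation is reasonable, and your Steps~1--2 are correct, including the absorption of the $|\nabla\rho|^2$ part of the forcing. Step~3 fails, however, for exactly the reason you flag as the main obstacle.

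The phase term $2\int\zeta^2\xi\rho^2A_\eps\nabla\phi\cdot\nabla\phi$ is not controlled by your $\mathcal E$, which contains only modulus and potential energy. Bounding it via $\xi\le2\sigma_0$ adds a forcing $C\sigma_0\int_D|\nabla\phi|^2$, of size $\sigma_0E$, at every step, and it does not shrink under iteration. So you actually get $\mathcal E(D_{k+1})\le\theta\,\mathcal E(D_k)+C\sigma_0\|\nabla\phi\|_{L^2(D)}^2$, hence only $\mathcal E(D')\lesssim\sigma_0E$, with no power of $\eps$. When $E\sim\ln\frac1\eps$ this is not even the $O(1)$ of Proposition~\ref{pf:prop:weighted-bos}. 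Smallness of $\sigma_0$ cannot be turned into a contraction here. Incidentally, in such a hole-filling scheme the contraction comes from the mass term beating $|\nabla\zeta|^2\sim\ell^{-2}$, i.e.\ from $(\eps/\ell)^2$, not from $\sigma_0$.

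The exact flat solutions $w=\sqrt{1-\eps^2k^2}\,e^{ikx_1}$ make this concrete. For them $\nabla|w|=0$, $\xi\equiv\eps^2k^2$, and $\eps^{-2}\int_{D'}\xi^2=\eps^2k^4|D'|$ is driven entirely by the phase. Take $k=c/\eps$ with $c^2\le\sigma_0$: the good-region hypothesis holds and $E=\pi c^2\eps^{-2}$ on $B_1$. The left side of \eqref{pf:eq:good-region-bos} is $c^4\eps^{-2}|D'|$, while the right side is $O(c^2\eps^{\beta_{\rm dec}-2})$. So your recursion cannot hold with $\theta$ and the additive term independent of the energy. Moreover, \eqref{pf:eq:good-region-bos} itself can only be expected under an energy restriction such as $E\le M\ln\frac1\eps$.

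The missing ingredient is the phase regularity you chose to avoid. The conservation law $\operatorname{div}(\rho^2A_\eps\nabla\phi)=-\operatorname{Im}(\bar w_\eps\mathcal R_\eps)$ is uniformly elliptic with bounded measurable coefficients. In the gauged chart its right side equals $-\operatorname{div}(\rho^2A_\eps\nabla\chi)$, the divergence of a bounded field. Meyers' estimate therefore gives $\|\nabla\phi\|_{L^{2+\delta}}^2\le C(E+1)$ on an intermediate domain, uniformly in $\eps$. Then pair the phase term with the mass term rather than with $\sigma_0$: for $p=(2+\delta)/\delta$,
\[
\int\zeta^2\xi|\nabla\phi|^2\le\|\xi\|_{L^p}\|\nabla\phi\|_{L^{2+\delta}}^2,
\qquad
\|\xi\|_{L^p}\le(2\sigma_0)^{1-2/p}\|\xi\|_{L^2}^{2/p}\le C(\eps^2E)^{1/p}.
\]
With this a single fixed cutoff suffices, since the cutoff term is at most $C\int_D\xi^2\le C\eps^2E$. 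One obtains $\mathcal E(D')\le C\eps^{2\delta/(2+\delta)}(E+1)^{1+\delta/(2+\delta)}$. This gives the `in particular' statement under $E\le M\ln\frac1\eps$, which is the only form used later.

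The $\mathcal R_\eps$ terms need the same care. Testing \eqref{pf:eq:Rbound} with $\zeta^2\xi w_\eps$ produces $\int\zeta^2|\nabla\xi|$, which Cauchy's inequality turns into an $O(1)$ constant rather than an $O(\eps^{\beta_{\rm dec}})$ one. Use instead the structure of \eqref{pf:eq:gauge-remainder}:
\[
\operatorname{Re}(\bar w_\eps\mathcal R_\eps)=-2\operatorname{Im}(\bar w_\eps\nabla w_\eps)\cdot A_\eps\nabla\chi-|w_\eps|^2A_\eps\nabla\chi\cdot\nabla\chi
\]
has no divergence part. Its contribution is therefore at most $C\|\xi\|_{L^2(D)}(\|\nabla w_\eps\|_{L^2(D)}+1)\le C\eps E^{1/2}(E+1)^{1/2}$.
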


\begin{proof}
In the unweighted flat case
$
  A_\eps=I, B_\eps=1, \mathcal R_\eps=0,
$
this is the good-region estimate obtained from
Bethuel--Orlandi--Smets, Proposition~A.4, as recalled in
\cite[Remark~1.2]{BOS}.  More precisely, on a ball where the clearing-out
condition
$
  |u_\eps|\ge1-\sigma_0
$
holds, their estimate gives
\[
  \int_{B_{1/2}}
  \left(
  |\nabla |u_\eps||^2
  +
  \frac{(1-|u_\eps|^2)^2}{\eps^2}
  \right)
  \le
  C\eps^{\beta_{\rm dec}} E_\eps(u_\eps;B_1).
\]
Here
\[
  E_\eps(u_\eps;B_1)
  =
  \int_{B_1}|\nabla u_\eps|^2
  +
  \frac1{\eps^2}\int_{B_1}(1-|u_\eps|^2)^2,
\]
up to harmless normalization constants.

We now explain why the same estimate holds in the present fixed-chart weighted
setting.  The coefficient assumptions imply that the weighted kinetic energy is
uniformly equivalent to the Euclidean kinetic energy:
\[
  \lambda_{\rm ell}\int_D|\nabla w_\eps|^2
  \le
  \int_D A_\eps\nabla w_\eps\cdot\nabla w_\eps
  \le
  \Lambda_{\rm ell}\int_D|\nabla w_\eps|^2 .
\]
Likewise, the potential weighted by \(B_\eps\) is uniformly equivalent to the
unweighted potential:
\[
  \lambda_{\rm ell}
  \frac1{\eps^2}\int_D(1-|w_\eps|^2)^2
  \le
  \frac1{\eps^2}\int_D B_\eps(1-|w_\eps|^2)^2
  \le
  \Lambda_{\rm ell}
  \frac1{\eps^2}\int_D(1-|w_\eps|^2)^2 .
\]

The proof of the good-region estimate in BOS is local.  After rescaling to the
GL scale \(x=x_0+\eps y\), the coefficient assumptions give
\[
  \|\nabla_y A_\eps(x_0+\eps y)\|_{L^\infty}
  +
  \|\nabla_y B_\eps(x_0+\eps y)\|_{L^\infty}
  \le C(\Lambda_{\rm ell}),
\]
so the rescaled coefficients remain uniformly elliptic with uniformly bounded
Lipschitz constants.  Therefore the Caccioppoli inequalities, the modulus
equation estimates, and the local covering argument used in
\cite[Proposition~A.4]{BOS} are stable under this uniformly elliptic
perturbation.

The lower-order term \(\mathcal R_\eps\) is controlled by the assumed bound.  In
the local energy inequalities its contribution is estimated by Cauchy's inequality:
for every \(\eta>0\),
\[
  |\langle\mathcal R_\eps,\zeta\rangle|
  \le
  \eta\int_D|\nabla w_\eps|^2
  +
  C_\eta
  \int_D
  \bigl(|\zeta|^2+|\nabla\zeta|^2+1\bigr),
\]
and the term with \(\eta\) is absorbed into the left-hand side of the local
estimate.  The remaining terms are lower order and are absorbed into the
constant \(C\) in \eqref{pf:eq:good-region-bos}.

In the half-disc case, since \(w_\eps=1\) on the flat boundary, one applies the
same argument after the standard even reflection across the flat boundary.  The
reflected coefficient matrix is still uniformly elliptic and satisfies the same
scaled derivative bounds, and the reflected perturbation satisfies the same type
of estimate.  This gives \eqref{pf:eq:good-region-bos} in the half-disc as well.

Finally, if the local energy is \(O(\ln(1/\eps))\), then
\eqref{pf:eq:good-region-bos} gives
\[
  \frac1{\eps^2}
  \int_{D'}(1-|w_\eps|^2)^2
  \le
  C_M\eps^{\beta_{\rm dec}}\ln\frac1\eps,
\]
which tends to zero.
\end{proof}

\begin{proposition}[Fixed-truncation estimate]
\label{prop:fixed-truncation-main}
Let $U_\eps$ be a reduced
admissible family of  critical points of
\eqref{eq:GL_energy} defined in Definition \ref{def:reduced-admissible-main} For every fixed $\delta>0$, there
are constants $c_\delta,C_\delta>0$, independent of $\eps$, such that
\begin{equation}
  c_\delta\le\rho_\eps\le1,
  \qquad
  \eps\|\nabla\rho_\eps\|_{L^\infty(\Omega_\delta)}\le C_\delta,
  \label{eq:fixed-scalar-main}
\end{equation}
and
\begin{equation}
  \frac1{\eps^2}\int_{\Omega_\delta}
  (1-|v_\eps|^2)^2\,\dd x\le C_\delta.
  \label{eq:fixed-potential-main}
\end{equation}
\end{proposition}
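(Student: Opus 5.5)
The scalar bounds \eqref{eq:fixed-scalar-main} are exactly Lemma~\ref{pf:lem:scalar-fixed-truncation}, so the content of the proposition is the potential bound \eqref{eq:fixed-potential-main}. The plan is to cover $\overline{\Omega_\delta}$ by finitely many charts and apply the weighted BOS estimate, Proposition~\ref{pf:prop:weighted-bos}, in each one. We use two kinds of charts.
\begin{itemize}
\item Interior discs $B_{2r}(x_0)\Subset\Omega\setminus V$.
\item Boundary half-discs obtained from $C^2$ flattening charts $\Psi:B^+_{2r}\to\Omega\cap U$ centred on compact subarcs of the open sides $\Gamma_i$. These subarcs are taken inside a slightly larger truncation $\Omega_{\delta/2}$, so that each chart stays a fixed distance from $V$.
\end{itemize}
Lemma~\ref{pf:lem:scalar-fixed-truncation} applied with $\delta/2$ gives uniform ellipticity and the scaled derivative bounds \eqref{pf:eq:coeff-bos}--\eqref{pf:eq:coeff-bos-derivative} on every chart.

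\textbf{Interior charts.} After rescaling $B_{2r}$ to $B_1$, the map $w_\eps=v_\eps$ solves \eqref{pf:eq:weighted-bos-eqn} with $A_\eps=a_\eps I$, $B_\eps=b_\eps$ and $\mathcal R_\eps=0$. The energy hypothesis \eqref{pf:eq:bos-log-energy} follows from the reduced bound \eqref{eq:reduced-log-bound-main}. Indeed, on the chart $\rho_\eps\ge c_{\delta/2}$, so
\[
\int|\nabla v_\eps|^2+\eps^{-2}\int(1-|v_\eps|^2)^2\le C_\delta F_\eps(v_\eps)\le C_\delta\ln\tfrac1\eps .
\]
Rescaling the chart by a fixed factor changes $\ln(1/\eps)$ only by an additive constant, which is harmless.

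\textbf{Boundary charts.} Pull the equation back by $\Psi$. This produces the matrix $a_\eps(\Psi)\,J\,D\Psi^{-1}D\Psi^{-T}$ and the weight $J\,b_\eps(\Psi)$, both uniformly elliptic. The Lipschitz norms of $D\Psi$ and $J$ are $O(1)$, so the scaled derivative condition \eqref{pf:eq:coeff-bos-derivative} still holds. The boundary datum $g_{\rm ph}=e^{i\varphi_b}$ is not constant. We remove it with the $C^2$ gauge
\[
w_\eps=e^{-i\tilde\varphi}v_\eps ,
\]
where $\tilde\varphi$ is any $C^2$ extension of $\varphi_b\circ\Psi$ off the flat boundary. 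Then $w_\eps=1$ on the flat part, $|w_\eps|=|v_\eps|$, and the potential is unchanged. Expanding $\divop(A\nabla(e^{i\tilde\varphi}w))$ produces terms of the form $A\nabla\tilde\varphi\cdot\nabla w$, $\divop(A\nabla\tilde\varphi)\,w$ and $|\nabla\tilde\varphi|^2_A w$.

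These terms are where care is needed. The coefficient derivative $\nabla a_\eps$ is only $O(\eps^{-1})$, so the term $\divop(A\nabla\tilde\varphi)w$ must not be differentiated. We keep it in weak form, $\int A\nabla\tilde\varphi\cdot\nabla(\bar w\zeta)$, which is bounded by
\[
\Lambda\int\bigl(|\nabla w||\zeta|+|\nabla\zeta|\bigr).
\]
The other two terms are bounded by $\Lambda\int(|\nabla w||\zeta|+|\zeta|)$, using $|w|\le1$. So $\mathcal R_\eps$ satisfies \eqref{pf:eq:Rbound} with a constant depending only on $\|\varphi_b\|_{C^2}$, the chart and $\delta$. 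The gauged energy is bounded by $C(1+F_\eps(v_\eps)/c_{\delta/2}^2)\le C\ln(1/\eps)$. Proposition~\ref{pf:prop:weighted-bos} then gives $\eps^{-2}\int_{D'}(1-|w_\eps|^2)^2\le C$ on each concentric half-disc $D'$.

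\textbf{Conclusion and main obstacle.} Choose finitely many sub-charts $D'$ whose images cover $\overline{\Omega_\delta}$; this uses compactness of $\overline{\Omega_\delta}$ away from $V$. Summing the local bounds, and using that the Jacobians of the charts are bounded above and below, gives \eqref{eq:fixed-potential-main}. The constants depend on $\delta$ through $c_{\delta/2}$, the number and geometry of the charts, and the $C^2$ norms of $\varphi_b$ and $\Psi$, but not on $\eps$. I expect the main obstacle to be verifying \eqref{pf:eq:Rbound} for the gauge term, specifically keeping $\nabla a_\eps=O(\eps^{-1})$ out of the remainder, together with controlling the chart overlaps near the junction between $\partial S_j(\delta)$ and the sides. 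Enlarging to $\Omega_{\delta/2}$ is what handles the junction: the charts are centred in $\overline{\Omega_\delta}$ but allowed to reach into $\Omega_{\delta/2}\setminus\Omega_\delta$.
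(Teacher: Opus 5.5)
Your proposal is correct and follows essentially the same route as the paper. Both cover the truncation by finitely many interior discs and flattened, gauged boundary half-discs, apply Proposition~\ref{pf:prop:weighted-bos} in each chart with the gauge remainder kept in weak form so that \eqref{pf:eq:Rbound} holds without $\nabla\rho_\eps=O(\eps^{-1})$ entering, and sum over the charts. Your explicit margin $\Omega_{\delta/2}$, which gives charts centred near the artificial arcs $\partial S_j(\delta)$ the scalar and logarithmic energy bounds, and your use of Jacobian bounds in the final summation make precise what the paper leaves implicit.
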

\begin{proof}
The scalar bounds in \eqref{eq:fixed-scalar-main} were proved in
Lemma~\ref{pf:lem:scalar-fixed-truncation}.  It remains to prove
\eqref{eq:fixed-potential-main}.  On $\Omega_\delta$ this estimate is equivalent
to the weighted form
\begin{equation}\label{pf:eq:Pdelta-weighted}
  \int_{\Omega_\delta}V_\eps(v_\eps)\,dx
  =\frac1{4\eps^2}\int_{\Omega_\delta}\rho_\eps^4(1-|v_\eps|^2)^2\,dx
  \le C_\delta .
\end{equation}
By Lemma~\ref{pf:lem:scalar-fixed-truncation}, \(\rho_\eps\) is uniformly
positive on \(\Omega_\delta\).  Hence the reduced logarithmic energy
bound \eqref{eq:reduced-log-bound-main} gives
\begin{equation}\label{pf:eq:standard-log-on-truncation}
  \int_{\Omega_\delta}|\nabla v_\eps|^2\,dx
  +
  \frac1{\eps^2}
  \int_{\Omega_\delta}(1-|v_\eps|^2)^2\,dx
  \le C_\delta\ln\frac1\eps .
\end{equation}

Let
$
  \mathcal K_\delta:=\overline{\Omega_\delta}\cap \Omega$.
Notice that \(\mathcal K_\delta\) includes the artificial circular arcs
\(\partial S_j(\delta)\cap \Omega\), but does not include the physical
boundary points of \(\partial \Omega\).  Cover \(\mathcal K_\delta\) by finitely
many chart pairs
$D_m'\Subset D_m, m=1,\ldots,M_\delta$.
Each pair is one of the following two types.

First, an interior chart:
\[
  D_m=B_{2r_m}(x_m)\Subset \Omega\setminus V,
  \qquad
  D_m'=B_{r_m}(x_m).
\]
The artificial circular arcs \(\partial S_j(\delta)\cap \Omega\) are
covered by such interior charts, because they are not boundary for the
equation.
On an interior chart, \(v_\eps\) solves
\[
  -\operatorname{div}(a_\eps\nabla v_\eps)
  =
  \eps^{-2}b_\eps v_\eps(1-|v_\eps|^2),
  \qquad
  a_\eps=\rho_\eps^2,\qquad b_\eps=\rho_\eps^4 .
\]
By Lemma~\ref{pf:lem:scalar-fixed-truncation}, the coefficients satisfy
the ellipticity and scaled derivative assumptions of
Proposition~\ref{pf:prop:weighted-bos}.  The logarithmic energy hypothesis
of Proposition~\ref{pf:prop:weighted-bos} follows from
\eqref{pf:eq:standard-log-on-truncation}.  Therefore
\[
  \frac1{\eps^2}
  \int_{D_m'}(1-|v_\eps|^2)^2\,dx
  \le C_m .
\]

Second, a boundary half-disc chart along a compact smooth side piece:
there is a \(C^2\) diffeomorphism
$
  \Psi_m:B^+_{2r_m}\to \Omega\cap U_m
$
where $U_m\cap\partial \Omega\neq\emptyset$ and away from $V$ such that
$D_m=\Psi_m(B^+_{2r_m})$,
$D_m'=\Psi_m(B^+_{r_m})$,
and \(\Psi_m(\{y_2=0\})=\partial \Omega\cap U_m\).
Work in one such
chart and write
\[
  V_\eps(y):=v_\eps(\Psi_m(y)).
\]
Let
$J(y):=|\det D\Psi_m(y)|$,
  $\mathsf A(y):=D\Psi_m(y)^{-1}D\Psi_m(y)^{-T}$.
Then \(V_\eps\) satisfies
\begin{equation}\label{pf:eq:flattened-v-equation}
  -\operatorname{div}_y(A_\eps(y)\nabla_y V_\eps)
  =
  \eps^{-2}B_\eps(y)V_\eps(1-|V_\eps|^2),
\end{equation}
where
\[
  A_\eps(y)
  =
  \rho_\eps^2(\Psi_m(y))J(y)\mathsf A(y),
  \qquad
  B_\eps(y)
  =
  \rho_\eps^4(\Psi_m(y))J(y).
\]
Since $\Psi_m$ is \(C^2\) diffeomorphism, the matrix \(A_\eps\) is uniformly elliptic on the fixed chart, and
Lemma~\ref{pf:lem:scalar-fixed-truncation} gives
\[
  \eps\|\nabla A_\eps\|_{L^\infty}
  +
  \eps\|\nabla B_\eps\|_{L^\infty}
  \le C_\delta .
\]

Choose a local $C^2$ phase lift of $g_{\rm ph}$ on this boundary subarc
and extend it to a function \(\chi\in C^2(B^+_{2r_m})\), so that
$e^{\ii\chi(y_1,0)}=g_{\rm ph}(\Psi_m(y_1,0))$.
Define
\[
  w_\eps(y):=e^{-\ii\chi(y)}V_\eps(y).
\]
Then
$|w_\eps|=|V_\eps|=|v_\eps|$,
$w_\eps=1$ on $y_2=0$.
Since
$V_\eps=e^{\ii\chi}w_\eps$,
$\nabla V_\eps
  =
  e^{\ii\chi}\bigl(\nabla w_\eps+\ii w_\eps\nabla\chi\bigr)$,
equation \eqref{pf:eq:flattened-v-equation}, after multiplication by
\(e^{-\ii\chi}\), becomes
\begin{equation}\label{pf:eq:gauged-halfdisc-equation}
  -\operatorname{div}(A_\eps\nabla w_\eps)
  =
  \eps^{-2}B_\eps w_\eps(1-|w_\eps|^2)
  +
  \mathcal R_\eps,
\end{equation}
where
\begin{equation}\label{pf:eq:gauge-remainder}
  \mathcal R_\eps
  =
  i\,\operatorname{div}(A_\eps w_\eps\nabla\chi)
  +
  i\,A_\eps\nabla\chi\cdot\nabla w_\eps
  -
  (A_\eps\nabla\chi\cdot\nabla\chi)w_\eps .
\end{equation}
Equivalently, for every test function \(\zeta\),
\[
  \langle\mathcal R_\eps,\zeta\rangle
  =
  -\ii\int A_\eps w_\eps\nabla\chi\cdot\nabla\zeta
  +
  \ii\int A_\eps\nabla\chi\cdot\nabla w_\eps\,\zeta
  -
  \int (A_\eps\nabla\chi\cdot\nabla\chi)w_\eps\zeta .
\]
Because
\[
  |w_\eps| = |v_\eps|\le1,\qquad
  \|A_\eps\|_{L^\infty}\le C_\delta,\qquad
  \|\chi\|_{C^2}\le C_\delta,
\]
this satisfies the perturbation bound in
Proposition~\ref{pf:prop:weighted-bos}.  Moreover,
\[
  |\nabla w_\eps|^2
  \le
  C\bigl(|\nabla V_\eps|^2+1\bigr),
\]
so the logarithmic energy bound
\eqref{pf:eq:standard-log-on-truncation} is preserved in the chart.
Thus Proposition~\ref{pf:prop:weighted-bos} gives
\[
  \frac1{\eps^2}
  \int_{D_m'}(1-|v_\eps|^2)^2\,dx
  =
  \frac1{\eps^2}
  \int_{\Psi_m^{-1}(D_m')}(1-|w_\eps|^2)J\,dy
  \le C_m .
\]

Summing over the finite covering gives
\eqref{eq:fixed-potential-main}.
Since \(0\le\rho_\eps\le1\), the weighted estimate
\eqref{pf:eq:Pdelta-weighted} follows immediately.  
\end{proof}

\subsection{Bad discs, clearing-out, and compactness}
Fix $\eta_0\in(0,1/2)$ and set the bad discs as
\begin{equation}\label{pf:eq:bad-set}
  \mathcal B_\eps:=\{x\in \Omega: |v_\eps(x)|\le1-\eta_0\}.
\end{equation}
The proof below follows the order of BBH Chapter~X on each fixed truncation
$\Omega_\delta$: potential estimate, bad discs, linear divergence-form estimates for the phase, and convergence.  The only polygonal operation is the final diagonal passage $\delta\rightarrow0$, where all bad centres converging to vertices are absorbed into shrinking vertex sectors.

\begin{lemma}[Bad discs on fixed truncations]\label{pf:lem:fixed-truncation-bad-discs}
For every fixed \(\delta>0\) there are constants
\(L_\delta>0\) and \(M_\delta\in\mathbb N\), independent of
\(\eps\), such that
$
  \mathcal B_\eps\cap \Omega_\delta
$
is covered by at most \(M_\delta\) physical relative balls
\[
  B_\Omega(x_{\eps,h},L_\delta\eps)
  :=
  \Omega\cap B(x_{\eps,h},L_\delta\eps),
  \qquad h=1,\ldots,M_\delta.
\]
These relative balls are full balls in the interior and flattened half-balls
along smooth side pieces.
\end{lemma}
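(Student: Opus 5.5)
The plan follows the standard BBH covering argument (Chapter~IV/X), run on the fixed truncation $\Omega_\delta$. It has three ingredients: a gradient bound, a lower bound on the potential near each bad point, and the upper bound on the potential from Proposition~\ref{prop:fixed-truncation-main}.

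\textbf{Step 1: gradient bound near the truncation.} Fix $\delta>0$ and work on the slightly larger truncation $\Omega_{\delta/2}$. Its closure meets $\partial\Omega$ only in compact subarcs of the open sides. Lemma~\ref{pf:lem:gradscale} then gives
\[
|\nabla v_\eps|\le C_\delta/\eps \quad\text{on } \Omega_{\delta/2}.
\]

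\textbf{Step 2: local potential lower bound at each bad point.} Let $x_0\in\mathcal B_\eps\cap\Omega_\delta$, so $|v_\eps(x_0)|\le1-\eta_0$. By Step~1, there is $\lambda=\lambda(\eta_0,C_\delta)>0$ such that $|v_\eps|\le1-\eta_0/2$ on $B(x_0,\lambda\eps)\cap\Omega$.

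The ball $B(x_0,\lambda\eps)$ might leave $\Omega$ through a physical side. There is no difficulty here, for two reasons. First, $|v_\eps|=1$ on the sides, so by the gradient bound every bad point lies at distance at least $\eta_0\eps/C_\delta$ from $\partial\Omega$. Second, along the $C^2$ side pieces the relative ball $\Omega\cap B(x_0,\lambda\eps)$ has area at least $c\lambda^2\eps^2$ for $\eps$ small, uniformly on compact subarcs. Shrinking $\lambda$ if necessary, we may also take $B(x_0,\lambda\eps)\cap\Omega\subset\Omega_{\delta/2}$.

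Consequently
\[
\frac1{\eps^2}\int_{\Omega\cap B(x_0,\lambda\eps)}(1-|v_\eps|^2)^2\ge \frac{c\lambda^2\eps^2}{\eps^2}\left(\frac{\eta_0}{2}\right)^2=:\mu_\delta>0 .
\]

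\textbf{Step 3: Vitali covering and counting.} Consider the family $\{B(x,\lambda\eps):x\in\mathcal B_\eps\cap\Omega_\delta\}$. Extract a maximal subfamily of pairwise disjoint balls $B(x_{\eps,h},\lambda\eps)$, $h=1,\dots,M$. By maximality, the dilated balls $B(x_{\eps,h},2\lambda\eps)$ cover $\mathcal B_\eps\cap\Omega_\delta$.

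Summing the lower bound from Step~2 over these disjoint balls, and using Proposition~\ref{prop:fixed-truncation-main} applied on $\Omega_{\delta/2}$, gives
\[
M\mu_\delta\le C_{\delta/2}.
\]
Hence $M\le M_\delta:=\lfloor C_{\delta/2}/\mu_\delta\rfloor$, independent of $\eps$. Set $L_\delta=2\lambda$.

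\textbf{Step 4: shape of the covering balls.} In the interior the relative balls are full discs. Near the sides, since $L_\delta\eps$ is much smaller than the chart size, the relative balls are images of half-disc-like sets under the flattening charts $\Psi_m$ of Proposition~\ref{prop:fixed-truncation-main}.

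\textbf{Main obstacle.} The delicate point is making Step~2 uniform. This requires two things:
\begin{enumerate}[label=(\alph*)]
\item the gradient bound and the area lower bound must hold on a neighbourhood slightly larger than $\Omega_\delta$, which is why we pass to $\Omega_{\delta/2}$ and use its potential bound $C_{\delta/2}$;
\item the density bound $|\Omega\cap B(x,r)|\ge cr^2$ must be uniform for $r\le\lambda\eps$ along the $C^2$ side pieces away from the vertices.
\end{enumerate}
Both follow from the fixed-truncation geometry, since the vertex sectors are removed.
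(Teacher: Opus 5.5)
Your proposal is correct and follows essentially the same route as the paper. You use the gradient bound of Lemma~\ref{pf:lem:gradscale} on \(\Omega_{\delta/2}\) to get \(|v_\eps|\le 1-\eta_0/2\) on a relative ball of radius of order \(\eps\) around each bad point, count the resulting uniform potential lower bound against \eqref{eq:fixed-potential-main} applied on \(\Omega_{\delta/2}\), and then select a Vitali-type cover; the differences (a maximal disjoint family with doubled radii instead of the \(5r\)-lemma, and your observation that bad points lie at distance at least \(\eta_0\eps/C_\delta\) from the sides, so the small balls are in fact full discs) are only cosmetic.
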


\begin{proof}
On the slightly larger fixed truncation \(\Omega_{\delta/2}\),
Lemma~\ref{pf:lem:scalar-fixed-truncation} gives
\[
  \rho_\eps\ge c_\delta>0 .
\]
Let \(x_0\in B_\eps\cap \Omega_\delta\).  For \(\eps\) small enough
depending on \(\delta\), the physical relative ball
\[
  B_\Omega(x_0,r_\delta\eps):=\Omega\cap B(x_0,r_\delta\eps)
\]
is contained in \(\Omega_{\delta/2}\).  If \(x_0\) is close to a smooth
side, this is a flattened half-ball; if \(x_0\) is close only to an
artificial circular arc \(\partial S_j(\delta)\cap \Omega\), it is an
interior ball, since the artificial arc is not a
boundary.

By Lemma~\ref{pf:lem:gradscale}, after choosing \(r_\delta>0\) sufficiently
small depending only on \(\eta_0\) and \(\delta\), one has
\[
  |v_\eps|\le1-\eta_0/2
  \quad\text{on }B_\Omega(x_0,r_\delta\eps).
\]
Hence
\[
  (1-|v_\eps|^2)^2\ge c(\eta_0)>0
  \quad\text{on }B_\Omega(x_0,r_\delta\eps).
\]
The relative ball has area comparable to \(\eps^2\), uniformly for fixed
\(\delta\), both in the full-ball and half-ball cases.  Therefore
\[
  \frac1{\eps^2}
  \int_{B_\Omega(x_0,r_\delta\eps)}
  (1-|v_\eps|^2)^2\,dx
  \ge c(\eta_0,\delta)>0 .
\]
The potential estimate \eqref{eq:fixed-potential-main}, applied on \(\Omega_{\delta/2}\),
bounds the number of pairwise disjoint such bad relative balls.  Vitali's
selection lemma or the $5r$-covering lemma \cite[Theorem~2.1]{Mattila1999} gives a countable pairwise disjoint subfamily whose fixed enlargement covers
\(B_\eps\cap \Omega_\delta\).  
Replacing \(r_\delta\) by this enlarged
radius gives the required \(L_\delta\) and \(M_\delta\).  
This is
the usual BBH bad-disc selection, applied only on the fixed truncation.
\end{proof}

\begin{proposition}[BBH compactness on a fixed truncation]
\label{pf:prop:fixed-truncation-compactness}
Fix $\delta>0$. For every sequence $\eps_\ell\rightarrow0$,
there exist a subsequence, still denoted by $\eps_\ell$, a
finite set
$
A_\delta\subset \Omega_\delta,
$
and a smooth map
$
u_\delta:\Omega_\delta\setminus A_\delta\rightarrow S^1
$
such that
\begin{equation}
\label{pf:eq:fixed-trunc-conv}
v_{\eps_\ell}\rightarrow u_\delta
\qquad\text{in }
C^k_{\mathrm{loc}}(\Omega_\delta\setminus A_\delta)
\quad\forall k\geq0.
\end{equation}
The limit $u_\delta$ is an $S^1$-valued harmonic map in
$\Omega_\delta\setminus A_\delta$, and its trace on every compact
smooth-side subarc $\Gamma_k\cap \overline{\Omega}_{\delta}$ is $g_{\rm ph}$.
\end{proposition}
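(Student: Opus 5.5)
The plan is to follow BBH Chapter~X on the fixed truncation, using Lemma~\ref{pf:lem:fixed-truncation-bad-discs} as the starting point.

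\emph{Step 1: localize the bad discs.} For the given sequence $\eps_\ell$, the bad set $\mathcal B_{\eps_\ell}\cap\Omega_\delta$ is covered by at most $M_\delta$ relative balls $B_\Omega(x_{\eps_\ell,h},L_\delta\eps_\ell)$. Passing to a subsequence, we may assume the number of balls is a constant $M\le M_\delta$ and that each centre converges, $x_{\eps_\ell,h}\to a^h\in\overline{\Omega_\delta}$. Set $A_\delta=\{a^h\}\cap\Omega_\delta$, a finite set. A limit point lying on a physical smooth side cannot be a genuine singularity. To see this, apply Corollary~\ref{pf:prop:weighted-bos-good} together with the potential bound \eqref{eq:fixed-potential-main} and the boundary condition $w_\eps=1$ in the gauged half-disc chart. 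That rules out a nonzero boundary degree, but it is only the standard BBH boundary argument if one first shows that bad discs cannot accumulate on the flat boundary. I would argue this as follows: the boundary potential estimate of Proposition~\ref{pf:prop:weighted-bos}, combined with the Pohozaev/stress-energy identity in the half-disc, shows that $|v_\eps|\to1$ uniformly near compact side subarcs. Bad centres tending to the artificial arcs $\partial S_j(\delta)\cap\Omega$ are harmless, since those arcs are interior points of $\Omega$. One may slightly enlarge $\Omega_\delta$ or work on $\Omega_{\delta/2}$ and restrict.

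\emph{Step 2: local energy bound away from $A_\delta$.} On any compact $\mathcal K\subset\overline{\Omega_\delta}\setminus A_\delta$ (excluding the vertices), $|v_{\eps_\ell}|\ge 1-\eta_0$ for $\ell$ large. There I write $v_\eps=|v_\eps|e^{i\psi_\eps}$ locally and prove the BBH estimate $\int_{\mathcal K}|\nabla v_\eps|^2\le C_{\mathcal K}$. The phase satisfies the linear divergence-form equation
\[
\operatorname{div}\bigl(\rho_\eps^2|v_\eps|^2\nabla\psi_\eps\bigr)=0 ,
\]
that is, $\operatorname{div}(\rho_\eps^2\, v_\eps\times\nabla v_\eps)=0$. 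So $\rho_\eps^2 v_\eps\times\nabla v_\eps=\nabla^\perp H_\eps$, and $H_\eps$ solves
\[
-\operatorname{div}\bigl(\rho_\eps^{-2}|v_\eps|^{-2}\nabla H_\eps\bigr)=\text{(Jacobian terms)} .
\]
The modulus part is controlled by Corollary~\ref{pf:prop:weighted-bos-good}, which gives $\int|\nabla|v_\eps||^2+\eps^{-2}\int(1-|v_\eps|^2)^2\to0$ on good regions. The degrees around the bad balls are bounded because of the logarithmic bound. Hence the $L^p$, $p<2$, bound on $\nabla v_\eps$ in $\Omega_\delta$ (BBH's argument, valid with uniformly elliptic coefficients $\rho_\eps^2\ge c_\delta^2$) upgrades to an $L^2$ bound away from $A_\delta$. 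On boundary charts I use the gauge $w_\eps=e^{-i\chi}V_\eps$ of the proof of Proposition~\ref{prop:fixed-truncation-main}.

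\emph{Step 3: higher regularity and passage to the limit.} With $H^1$ bounds and $|v_\eps|\to1$ uniformly on $\mathcal K$, I use the standard BBH bootstrap to obtain $C^k$ bounds on smaller sets. First, $1-|v_\eps|^2$ satisfies an equation of the form
\[
-\eps^2\operatorname{div}(\ldots)+2\rho_\eps^2|v_\eps|^2(1-|v_\eps|^2)=\eps^2\,O(|\nabla v|^2) .
\]
This gives $\eps^{-2}(1-|v_\eps|^2)$ bounded in $L^\infty_{\rm loc}$, and then the phase equation gives uniform $C^{1,\alpha}$ and higher bounds. The coefficient derivatives are not uniformly bounded: $\eps\nabla\rho_\eps$ is only $O(1)$. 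On interior sets, however, Lemma~\ref{pf:lem:scalar}(iii) makes $\rho_\eps-1$ exponentially small in every $C^k$. Near the sides one uses the scalar boundary layer structure, i.e.\ $\rho_\eps\to\rho_*=1$ away from a layer of width $\eps$ where $\rho_b<1$. \textbf{This is the main obstacle:} uniform $C^k$ control up to smooth sides, where $\rho_\eps$ has an $\eps$-boundary layer. I would first try to absorb the layer by writing the equation in terms of $\rho_\eps v_\eps=U_\eps$, which solves the unweighted GL equation with smooth trace $g$. I would then apply standard BBH boundary estimates for $U_\eps$ and divide by $\rho_\eps$.

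\emph{Step 4: conclusion.} Arzel\`a--Ascoli and a diagonal argument over an exhaustion of $\Omega_\delta\setminus A_\delta$ give $v_{\eps_\ell}\to u_\delta$ in $C^k_{\rm loc}$. Uniform convergence of $|v_\eps|$ to $1$ gives $|u_\delta|=1$. Passing to the limit in $\operatorname{div}(\rho_\eps^2\,v_\eps\times\nabla v_\eps)=0$ with $\rho_\eps\to1$ locally yields $\operatorname{div}(u_\delta\times\nabla u_\delta)=0$, i.e.\ $u_\delta$ is an $S^1$-valued harmonic map. The trace identity passes to the limit from $v_\eps=g_{\rm ph}$ on the side subarcs, using the uniform estimates up to the boundary.
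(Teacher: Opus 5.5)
Your interior analysis (Steps~2--4 on sets compactly contained in $\Omega$) is essentially the paper's BBH Chapter~X route. Bad-disc centres give $A_\delta$. The $W^{1,p}$ bound and the uniformly elliptic phase equation give local $H^1$ control. Corollary~\ref{pf:prop:weighted-bos-good} kills the modulus defect, and Lemma~\ref{pf:lem:scalar}(iii) makes $\rho_\eps\to1$ smoothly, so the bootstrap closes.

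The gap is at the physical sides, and it is the trace assertion that suffers. You aim for uniform $C^k$ control up to the sides, and that is false, not merely hard. Take a half-plane with constant $\rho_b=c<1$ and a phase depending only on the normal variable. The phase equation forces $\rho_\eps^2 s_\eps^2\partial_\nu\psi_\eps$ to be constant, so $\partial_\nu\psi_\eps$ on the side is about $c^{-2}$ times its value just inside the $\eps$-layer. Whenever the limiting normal current is nonzero, $\partial_\nu v_\eps$ does not converge up to the side, and second derivatives are of order $\eps^{-1}$ there.

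Your proposed fix fails as well. $U_\eps$ has trace $g$ with $|g|=\rho_b<1$, so $\eps^{-2}(1-|U_\eps|^2)$ is of order $\eps^{-2}$ on the side. The BBH boundary estimates use $|g|=1$ and do not apply. Dividing by $\rho_\eps$, whose gradient is $O(\eps^{-1})$ in the layer, brings the layer back. Step~4 then identifies the trace ``using the uniform estimates up to the boundary'', and those estimates are not available.

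None of this is needed. Since $A_\delta\subset\Omega_\delta\subset\Omega$, convergence in $C^k_{\rm loc}(\Omega_\delta\setminus A_\delta)$ only concerns compact sets at positive distance from the physical sides. There Lemma~\ref{pf:lem:scalar}(iii) removes the coefficient problem. The trace is then obtained as in the paper. You already cite the uniform $W^{1,p}(\Omega_\delta)$ bound, $1<p<2$, in Step~2 (BBH Theorem~X.1 in gauged half-disc charts). It gives weak compactness of $e^{-\ii\chi}v_{\eps_\ell}$, which equals $1$ on the flat side. Continuity of the trace operator on $W^{1,p}$ then gives $\mathrm{Tr}\,u_\delta=g_{\rm ph}$.

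For the same reason, your Step~1 exclusion of bad discs accumulating on a side is unnecessary here: such discs are invisible on compact subsets of $\Omega_\delta\setminus A_\delta$. As written it is also not an argument. Corollary~\ref{pf:prop:weighted-bos-good} presupposes $|w_\eps|\ge1-\sigma_0$ on the whole chart, which is exactly what is in question, and the Pohozaev claim is not carried out. The paper defers that exclusion to Step~5 of Theorem~\ref{thm:clearing-main}, where it uses the BBH degree-zero argument in a gauged half-disc.
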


\begin{proof}
Fix $\delta>0$ and work 
on the truncation $\Omega_{\delta}$. By
Lemma~\ref{pf:lem:scalar-fixed-truncation}, for all sufficiently small
$\eps$,
\begin{equation}
\label{pf:eq:fixed-coeff-bounds-proof}
c_\delta\leq\rho_\eps\leq1,
\qquad
\eps
\|\nabla\rho_\eps\|_{L^\infty(\Omega_{\delta})}
\leq C_\delta.
\end{equation}
In particular, the coefficients
\[
a_\eps:=\rho_\eps^2,
\qquad
b_\eps:=\rho_\eps^4
\]
are uniformly elliptic on every fixed-truncation $\Omega_\delta$. Moreover,
Proposition~\ref{prop:fixed-truncation-main} gives
\begin{equation}
\label{pf:eq:fixed-potential-proof}
\frac1{\eps^2}
\int_{\Omega_{\delta}}
(1-|v_\eps|^2)^2\,dx
\leq C_\delta.
\end{equation}

With the above conditions, we use below the compactness mechanism developed for
critical Ginzburg--Landau solutions in
\cite[Theorems~X.1--X.3]{BBH}. More precisely,
\cite[Theorem~X.1]{BBH} gives the subcritical
$W^{1,p}$ estimates, $1<p<2$; Step~1 in the proof of
\cite[Theorem~X.2]{BBH} gives the strong $H^1$ convergence away
from the limiting bad points; and Steps~2--3 of the same proof give
the local uniform and $C^k$ convergence. The corresponding argument
near a smooth boundary portion is contained in
\cite[Theorem~X.3]{BBH}.

Choose the bad-set threshold $\eta_0$ no larger than the constant
$\sigma_0$ in Corollary~\ref{pf:prop:weighted-bos-good}. By
Lemma~\ref{pf:lem:fixed-truncation-bad-discs}, the set
$
\mathcal B_\eps\cap \Omega_{\delta}
=
\{x\in \Omega_{\delta}:|v_\eps(x)|
\leq1-\eta_0\}
$
is covered by at most $M_\delta$ relative balls of radius
$L_\delta\eps$. After passing to a subsequence and
relabeling the centres, every sequence of centres that remains in a
compact subset of $\Omega_\delta$ converges. Let
$A_\delta$
be the finite set of their distinct limits lying in
$\Omega_\delta$. Since the radii of the bad balls tend to zero, every
accumulation point in $\Omega_\delta$ of the bad set belongs to
$A_\delta$.

The local $W^{1,p}$ estimate used in
\cite[Theorem~X.1]{BBH}
applies on a
finite covering of $\Omega_\delta$ by interior discs and flattened
boundary half-discs. In the latter charts one first gauges away the
boundary phase. We therefore obtain, for every $1<p<2$,
\begin{equation}
\label{pf:eq:fixed-W1p}
\sup_\ell
\|v_{\eps_\ell}\|_{W^{1,p}(\Omega_\delta)}
\leq C_{\delta,p}.
\end{equation}
The artificial circular arcs are treated by interior charts, since
they are not boundaries for the original equation.

Let
$
\mathcal K\Subset \Omega_\delta\setminus A_\delta.
$
Choose finitely many simply connected nested patches
$
D'\Subset D\Subset
\Omega_{\delta}\setminus A_\delta
$
covering $\mathcal K$. For all sufficiently large $\ell$, the set $D$ is
disjoint from the enlarged bad balls. Hence
$
|v_{\eps_\ell}|\geq1-\eta_0$ in $D$.
We may consequently write
\[
v_{\eps_\ell}
=
s_{\eps_\ell}e^{\ii\psi_{\eps_\ell}},
\qquad
s_{\eps_\ell}:=
|v_{\eps_\ell}|
\geq1-\eta_0.
\]
The lift is chosen locally; equivalently, in the global BBH
decomposition one first subtracts the singular phases generated by
the bad-disc degrees. Since $D$ avoids $A_\delta$, no singular phase
remains in $D$.

The imaginary and real parts of the reduced equation in \eqref{eq:reduced-equation-main} are respectively
\begin{equation}
\label{pf:eq:phase-fixed-proof}
\operatorname{div}
\left(
\rho_{\eps_\ell}^2
s_{\eps_\ell}^2
\nabla\psi_{\eps_\ell}
\right)
=0
\end{equation}
and
\begin{equation}
\label{pf:eq:modulus-fixed-proof}
-\operatorname{div}
\left(
\rho_{\eps_\ell}^2
\nabla s_{\eps_\ell}
\right)
=
\eps_\ell^{-2}
\rho_{\eps_\ell}^4
s_{\eps_\ell}
(1-s_{\eps_\ell}^2)
-
\rho_{\eps_\ell}^2
s_{\eps_\ell}
|\nabla\psi_{\eps_\ell}|^2.
\end{equation}
The phase coefficient \eqref{pf:eq:phase-fixed-proof} satisfies
\[
c_\delta^2(1-\eta_0)^2
\leq
\rho_{\eps_\ell}^2s_{\eps_\ell}^2
\leq1.
\]

Corollary~\ref{pf:prop:weighted-bos-good} applied to
\eqref{pf:eq:modulus-fixed-proof}
gives
\begin{equation}
\label{pf:eq:modulus-decay-fixed}
\int_{D'}
|\nabla s_{\eps_\ell}|^2\,dx
+
\frac1{\eps_\ell^2}
\int_{D'}
(1-s_{\eps_\ell}^2)^2\,dx
\rightarrow0.
\end{equation}

We now repeat Step~1 in the proof of
\cite[Theorem~X.2]{BBH}. The $W^{1,p}$ estimate
\eqref{pf:eq:fixed-W1p}, followed by the Caccioppoli estimate for the
uniformly elliptic phase equation
\eqref{pf:eq:phase-fixed-proof}, gives a local $H^1$ bound for the
normalized phase, and 
passing to a further subsequence, we have
$
\psi_{\eps_\ell}
\rightharpoonup\psi_\delta$
weakly in $H^1(D')$.
Since $D$ is compactly contained in $\Omega$, the scalar profile
satisfies
$
\rho_{\eps_\ell}\rightarrow1
$ locally uniformly in $D$.

Together with \eqref{pf:eq:modulus-decay-fixed}, this implies
$
\rho_{\eps_\ell}^2
s_{\eps_\ell}^2
\rightarrow1$ strongly in $L^q(D')$ for finite $q$.
  Repeating the test-function argument of
Step~1 in \cite[Theorem~X.2]{BBH}, applied to the difference between
\eqref{pf:eq:phase-fixed-proof} and its limiting equation, therefore
yields
$
\psi_{\eps_\ell}
\rightarrow\psi_\delta$ strongly in $H^1_{\mathrm{loc}}(D)$.
At the same time, \eqref{pf:eq:modulus-decay-fixed} gives
$
s_{\eps_\ell}
\rightarrow1$
strongly in $H^1_{\mathrm{loc}}(D)$.
Consequently,
\begin{equation}
\label{pf:eq:strong-H1-fixed}
v_{\eps_\ell}
\rightarrow
e^{\ii\psi_\delta}
\qquad\text{strongly in }H^1_{\mathrm{loc}}(D).
\end{equation}

Passing to the limit in \eqref{pf:eq:phase-fixed-proof} gives
$
\Delta\psi_\delta=0
$ in $D$.
Hence the local limit
$
u_\delta:=e^{\ii\psi_\delta}
$
is an $S^1$-valued harmonic map. The limits obtained on overlapping
patches agree, since they are limits of the same subsequence, and
therefore define a smooth harmonic map on
$\Omega_\delta\setminus A_\delta$.

Once the strong $H^1$ convergence
\eqref{pf:eq:strong-H1-fixed} is known, the degree-zero regularity
argument in Steps~2--3 of
\cite[Theorem~X.2]{BBH}, together with
\cite[Appendix~I, Remark~A.1]{BBH}, applies on each interior
patch. The scalar estimates and standard interior elliptic estimates
give smooth convergence of $\rho_{\eps_\ell}$ to $1$ on
compact subsets. Elliptic bootstrapping in the reduced equation then
yields
\[
v_{\eps_\ell}
\rightarrow u_\delta
\qquad \text{in 
$C^k_{\mathrm{loc}}
(\Omega_\delta\setminus A_\delta)$\qquad
$\forall k\geq0$}.
\]

It remains to identify the boundary trace. Let $H$ be a flattened
half-disc centred on a compact smooth-side subarc and let $\chi$ be a
$C^2$ extension of the boundary phase. Set
$
w_{\eps_\ell}
:=
e^{-\ii\chi}v_{\eps_\ell}.
$
Then
$
w_{\eps_\ell}=1
$
on the flat part of the boundary. The half-disc version of the
$W^{1,p}$ estimate gives weak compactness in $W^{1,p}(H)$ for some
$p>1$. Since the trace operator is continuous on $W^{1,p}(H)$, the
weak limit has trace $1$. Undoing the gauge gives
$
\operatorname{Tr}u_\delta=g_{\rm ph}
$
on the smooth-side subarc. This proves the stated trace condition.

\end{proof}

\begin{theorem}[Clearing-out and polygonal compactness]
\label{thm:clearing-main}
For every sequence $\eps_\ell\rightarrow0$, there are a subsequence, a
finite set
\[
  A=\{a_1,\ldots,a_N\}\Subset \Omega,
\]
such that,
\begin{equation}
  v_{\eps_\ell}\rightarrow u_*
  \quad\text{in }C^k_{\rm loc}(\Omega\setminus(A\cup V))
  \quad\forall k\ge0,
  \label{eq:compactness-main}
\end{equation}
where $u_*$ is a smooth $S^1$-valued harmonic map.  Each stabilized
interior degree
\begin{equation}
  d_i=\deg(u_*,\partial B(a_i,r))
  \label{eq:interior-degree-main}
\end{equation}
is a nonzero integer.
\end{theorem}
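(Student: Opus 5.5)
The plan is to obtain Theorem~\ref{thm:clearing-main} from Proposition~\ref{pf:prop:fixed-truncation-compactness} by a diagonal exhaustion of $\Omega\setminus V$. First I fix a decreasing sequence $\delta_n\downarrow0$. Next I apply Proposition~\ref{pf:prop:fixed-truncation-compactness} successively on $\Omega_{\delta_1}$, then on $\Omega_{\delta_2}$ along the subsequence already extracted, and so on. The diagonal subsequence $\eps_\ell$ then converges in $C^k_{\rm loc}(\Omega_{\delta_n}\setminus A_{\delta_n})$ for every $n$. Because the limits are limits of one and the same sequence, they are compatible: $A_{\delta_{n+1}}\cap\Omega_{\delta_n}=A_{\delta_n}$ up to points lying on the artificial arcs, and $u_{\delta_{n+1}}=u_{\delta_n}$ on the common domain. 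To avoid the arc issue I use the open interiors and note that the arcs are interior charts. I then set $A':=\bigcup_n A_{\delta_n}$ and define $u_*$ on $\Omega\setminus(A'\cup V)$ by gluing. Since every compact $\mathcal K\Subset\Omega\setminus(A'\cup V)$ lies in some $\Omega_{\delta_n}\setminus A_{\delta_n}$, this yields \eqref{eq:compactness-main}. Finally I let $A:=A'\setminus\{$removable points$\}$, and $u_*$ is a smooth $S^1$-valued harmonic map.

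The key point is that $A'$ is finite and stays away from $V$. The bound $M_\delta$ of Lemma~\ref{pf:lem:fixed-truncation-bad-discs} depends on $\delta$, so I need a $\delta$-independent count. My approach is the usual BBH lower bound: every bad point $a\in A'$ with $d_a:=\deg(u_*,\partial B(a,r))\neq0$ costs energy. Near $a$ we have $\rho_\eps\to1$ uniformly (Lemma~\ref{pf:lem:scalar}(iii)), so $F_\eps(v_\eps;B(a,r))\ge\pi|d_a|\ln(r/\eps)-C$, by the standard ball-construction lower bound on a disc where the weight is $1+o(1)$. Summing over disjoint discs around the nonzero-degree points and comparing with \eqref{eq:reduced-log-bound-main} gives $\sum|d_a|\le 2C/\pi$. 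Hence only finitely many points have nonzero degree, uniformly in $n$. For points $a$ with $d_a=0$, I show removability by the degree-zero argument already used in Steps~2--3 of \cite[Theorem~X.2]{BBH}. With zero degree the phase lifts globally on an annulus, and the strong $H^1$ convergence of Proposition~\ref{pf:prop:fixed-truncation-compactness} extends across $a$. Clearing-out via Corollary~\ref{pf:prop:weighted-bos-good} then gives $|v_\eps|\ge1-\eta_0$ near $a$ for large $\ell$, so $a$ is not a limit of bad centres. Thus $A$ consists exactly of the points of nonzero degree. Each $d_i$ is an integer because $u_*$ is $S^1$-valued and smooth on small circles, and it is stabilized (independent of small $r$) by homotopy invariance.

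It remains to show $A\Subset\Omega$, that is, that no point of $A$ lies on a smooth side and that $A$ does not accumulate at $V$. Accumulation at $V$ is excluded by finiteness. Boundary points on an open side $\Gamma_k$ are excluded because, in a gauged half-disc chart, $w_\eps=1$ on the flat part. Even reflection as in Proposition~\ref{pf:prop:weighted-bos}, together with the half-disc version of the $W^{1,p}$ and clearing-out estimates from \cite[Theorem~X.3]{BBH}, shows that bad discs cannot converge to the flat boundary with a nontrivial limit. Limits of bad centres approaching $V$ are simply absorbed into the shrinking sectors $S_j(\delta_n)$ and do not belong to $A$.

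I expect the main obstacle to be the uniform energy lower bound and counting step. The constants from the fixed truncations blow up as $\delta\to0$, so the finiteness of $A$ must come from the global bound \eqref{eq:reduced-log-bound-main} alone. This is done via the lower bound $\pi|d|\ln(1/\eps)$ around each nonzero-degree interior point, where $\rho_\eps$ is exponentially close to $1$. Points that do not carry such a degree must be shown removable.
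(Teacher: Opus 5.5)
Your architecture is the paper's: diagonal exhaustion over $\Omega_{\delta_n}$, nonzero degree at every non-removable bad limit, a $\delta$-independent count from \eqref{eq:reduced-log-bound-main} on interior discs where $\rho_\eps\to1$, exclusion of side points in gauged half-discs, and absorption of vertex-approaching cores. Your count uses the ball-construction bound $\pi|d|\ln(r/\eps)$. If anything, this is more robust than the paper's $\pi\nu_s^2$ bound, which needs the cores converging to $a$ to stay within $O(\eps)$ of each other; since $|d|\ge1$, either gives finiteness. The gap is in the removal of degree-zero points, which is the real content of Step~3 of the paper.

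Two things fail there. First, Corollary~\ref{pf:prop:weighted-bos-good} has $|w_\eps|\ge1-\sigma_0$ on all of $D$ as a \emph{hypothesis}. It is a decay estimate on regions already known to be good, so invoking it to obtain $|v_\eps|\ge1-\eta_0$ near a point that is by assumption a limit of bad centres is circular. Second, ``the phase lifts on an annulus, so strong $H^1$ convergence extends across $a$'' does not follow. Proposition~\ref{pf:prop:fixed-truncation-compactness} gives strong $H^1$ convergence only on patches avoiding $A_\delta$. A lift on an annulus gives no control inside $B(a,r)$, where $v_\eps$ may vanish and a zero-degree cluster of cores could a priori carry energy of order $\ln\frac1\eps$. (The paper asserts the same $H^1$ compactness and then cites BBH Appendix~I with Remark~A.1, so it is not more explicit here.)

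A repair that bypasses $H^1$ altogether uses the stress tensor $T_\eps$ of $U_\eps=\rho_\eps v_\eps$ from the proof of Proposition~\ref{thm:vanishing-main}. Testing $\divop T_\eps=0$ against $x-a$ and using $\operatorname{tr}T_\eps=-\frac{1}{2\eps^2}(1-|U_\eps|^2)^2$ gives
\[
\frac{1}{2\eps^2}\int_{B(a,r)}(1-|U_\eps|^2)^2\,dx=-\int_{\partial B(a,r)}(x-a)\cdot T_\eps\nu\,ds .
\]
On the fixed circle the right side converges to the corresponding flux of $T_*(u_*)$, since the potential part vanishes there as in that proof. Now suppose $d_a=0$. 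Then the phase of $u_*$ is single valued on the punctured disc. The bound $\nabla u_*\in L^p$ from \eqref{pf:eq:fixed-W1p} excludes negative Laurent modes, and the limit of $\divop(U_\eps\times\nabla U_\eps)=0$ excludes a term $c\ln|x-a|$. So $u_*$ is a smooth harmonic map across $a$, $T_*$ is divergence free and traceless, and the limiting flux is $\int_{B(a,r)}\operatorname{tr}T_*=0$. With Lemma~\ref{pf:lem:scalar}(iii) this gives $\eps_\ell^{-2}\int_{B(a,r)}(1-|v_{\eps_\ell}|^2)^2\to0$. Finally, apply the computation of Lemma~\ref{pf:lem:fixed-truncation-bad-discs}: by Lemma~\ref{pf:lem:gradscale}, a bad point forces potential at least $c(\eta_0,\delta)$ on a ball of radius $r_\delta\eps$. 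This excludes bad points from $B(a,r/2)$ for large $\ell$, contradicting that $a$ is a bad limit.
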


\begin{proof}

\medskip
\noindent
\textbf{Step 1: relative exhaustion by fixed truncations.}
Choose a decreasing sequence \(\delta_m\rightarrow0\) and set
\[
  \Omega_m:=\Omega_{\delta_m}
  =
  \Omega\setminus\bigcup_{j=1}^K S_j(\delta_m).
\]
Consequently, we have
$
  \bigcup_{m=1}^\infty \Omega_m=\Omega\setminus V,
$
and for every compact set
$
  \mathcal K\Subset\overline \Omega\setminus V
$
there exists \(m\) such that
$
  \mathcal K\cap \Omega\subset \Omega_m.
$
Applying Lemma~\ref{pf:lem:fixed-truncation-bad-discs} on the fixed truncation \(\Omega_m\),
there exist constants \(L_m>0\) and \(M_m<\infty\), independent of
\(\eps\), such that
\begin{equation}\label{pf:eq:fixed-m-bad-cover}
  \mathcal B_\eps\cap \Omega_m
  \subset
  \bigcup_{h=1}^{M_m}
  B_\Omega(x^{(m)}_{\eps,h},L_m\eps).
\end{equation}

On the same fixed truncation, according to
Proposition~\ref{pf:prop:fixed-truncation-compactness},
there is a finite set
$
  A^{(m)}\subset \Omega_m
$
consisting of the non-removable bad limits in \(\Omega_m\), and a smooth
\(S^1\)-valued harmonic map \(u_*^{(m)}\) on
\(\Omega_m\setminus A^{(m)}\), such that
\begin{equation}\label{pf:eq:fixed-m-convergence}
  v_\eps\to u_*^{(m)}
  \quad\text{in }C^k_{\rm loc}(\Omega_m\setminus A^{(m)})
  \quad\forall k\ge0.
\end{equation}

\medskip
\noindent
\textbf{Step 2: diagonal subsequence and the local bad-limit set.}
We now extract one subsequence which works for all fixed truncations.  
We take the diagonal subsequence, relabelled by
\(\eps_\ell\), which has the property that for every fixed \(m\),
\eqref{pf:eq:fixed-m-convergence} holds along \(\eps_\ell\) for all sufficiently large
\(\ell\).

Define the local bad-limit set away from the vertices by
\[
  A_{\rm loc}
  :=
  \left\{
  a\in\overline \Omega\setminus V:\,
  \exists\,\ell_s\to\infty,\ 
  \exists\,y_{\ell_s}\in\mathcal B_{\eps_{\ell_s}}
  \text{ such that } y_{\ell_s}\to a
  \right\}.
\]
Thus \(A_{\rm loc}\) is the set of all accumulation points, away from \(V\), of
points where \(|v_{\eps_\ell}|\le1-\eta_0\).

We first prove local finiteness in the interior.  Let
$
  \mathcal K\Subset \Omega\setminus V.
$
Choose \(m\) so large that
$
  \mathcal K\Subset \Omega_m.
$
We claim that
\begin{equation}\label{pf:eq:Aloc-subset-Am}
  A_{\rm loc}\cap\mathcal K\subset A^{(m)}.
\end{equation}
Indeed, if
$
  a\in A_{\rm loc}\cap\mathcal K
$ and $a\notin A^{(m)},
$
then there is a ball
$
  B(a,r)\Subset \Omega_m\setminus A^{(m)}.
$
By \eqref{pf:eq:fixed-m-convergence},
$
  |v_{\eps_\ell}|\to1
$ uniformly on $B(a,r/2)$.
Therefore for \(\ell\) sufficiently large,
$
  |v_{\eps_\ell}|>1-\eta_0
$ on $B(a,r/2)
$,
contradicting the fact that \(a\) is the limit of bad points.  This proves
\eqref{pf:eq:Aloc-subset-Am}.  Since \(A^{(m)}\) is finite, \(A_{\rm loc}\cap\mathcal K\)
is finite.  Hence \(A_{\rm loc}\cap \Omega\) is locally finite in
\(\Omega\setminus V\).  

\medskip
\noindent
\textbf{Step 3: non-zero degree at interior non-removable points.}
Let \(a\in A_{\rm loc}\cap \Omega\).  Choose \(m\) and \(r>0\) such that
$
  \overline{B(a,2r)}
  \Subset
  \Omega_m
$
and
$
  \partial B(a,r)\cap A_{\rm loc}=\varnothing .
$
This is possible by the local finiteness just proved.  For all sufficiently large
\(\ell\), there are no bad points on \(\partial B(a,r)\), so
$
  |v_{\eps_\ell}|\ge1-\eta_0
  \quad\text{on }\partial B(a,r).
$
Hence the degree
$
  \nu(a):=\deg(v_{\eps_\ell},\partial B(a,r))
$
is well-defined and independent of \(\ell\) for all large \(\ell\).

We claim that
$
  \nu(a)\ne0.
$
By Step~2, \(a\in A^{(m)}\) for every \(m\) with
\(B(a,2r)\Subset \Omega_m\).  Recall that \(A^{(m)}\) was chosen as the minimal
non-removable singular set produced by the fixed-truncation BBH Chapter X
compactness theorem.  If \(\nu(a)=0\), the local phase has no singular topological
part.  In that case the zero-degree local compactness mechanism applies: after
writing a single-valued phase on \(\partial B(a,r)\), the phase equation
$
  \operatorname{div}
  \bigl(\rho_{\eps_\ell}^2|v_{\eps_\ell}|^2\nabla\psi_{\eps_\ell}\bigr)=0
$
is uniformly elliptic on the fixed truncation, and the BBH--Stampacchia estimates
give strong local \(H^1\) compactness in \(B(a,r/2)\).  Then the degree-zero
convergence theorem of BBH Appendix~I, together with Remark~A.1 there for
solutions once strong \(H^1\) convergence is known, gives
$
  |v_{\eps_\ell}|\to1
  \quad\text{uniformly on }B(a,r/4).
$
This contradicts \(a\in A_{\rm loc}\).  Hence \(\nu(a)\ne0\).

\medskip
\noindent
\textbf{Step 4: finiteness of the interior bad-limit set.}
Let
$
  a^{(1)},\ldots,a^{(L)}
$
be any finite subset of distinct points of \(A_{\rm loc}\cap \Omega\).  By the
local finiteness proved in Step~2, we may choose pairwise
disjoint balls
$
  B(a^{(s)},2r_s)\Subset \Omega\setminus V,
 s=1,\ldots,L,
$
such that
$
  \partial B(a^{(s)},r_s)\cap A_{\rm loc}=\varnothing.
$
By Step~3, the stabilized degrees
$
  \nu_s
  :=
  \deg(v_{\eps_\ell},\partial B(a^{(s)},r_s))
$
are non-zero integers.

On each ball \(B(a^{(s)},2r_s)\), the scalar profile satisfies
$
  \rho_{\eps_\ell}\to1
$ smoothly on compact subsets
because the ball is compactly contained in \(\Omega\).  Therefore the standard
BBH annular lower bound for a vortex of degree \(\nu_s\) gives
\[
  F_{\eps_\ell}
  \bigl(v_{\eps_\ell};B(a^{(s)},r_s)\bigr)
  \ge
  \bigl(\pi\nu_s^2-o(1)\bigr)
  \ln\frac1{\eps_\ell}
  -
  C_s .
\]
Summing over \(s=1,\ldots,L\), and using the disjointness of the balls, we get
\[
  F_{\eps_\ell}(v_{\eps_\ell})
  \ge
  \left(
  \pi\sum_{s=1}^L\nu_s^2-o(1)
  \right)
  \ln\frac1{\eps_\ell}
  -
  C_L .
\]
On the other hand, reduced admissibility gives
$
  F_{\eps_\ell}(v_{\eps_\ell})
  \le
  C\ln\frac1{\eps_\ell}.
$
Dividing by \(\ln(1/\eps_\ell)\) and letting \(\ell\to\infty\), we obtain
$
  \pi\sum_{s=1}^L\nu_s^2\le C.
$
Since every \(\nu_s\ne0\), it follows that
$
  L\le \frac{C}{\pi}.
$
The finite subset was arbitrary.  Hence
\begin{equation}
  A_{\rm int}:=A_{\rm loc}\cap \Omega
  \quad\text{is finite}.
  \label{pf:eq:interior-bad-set-finite}
\end{equation}

\medskip
\noindent
\textbf{Step 5: no bad limits on smooth sides.}
We next exclude bad limits on \(\partial \Omega\setminus V\).  Suppose
$
  p\in A_{\rm loc}\cap(\partial \Omega\setminus V).
$
By \eqref{pf:eq:interior-bad-set-finite}, choose a flattened half-disc
$
  H_{2r}(p):=\Omega\cap B(p,2r)
$
contained in a compact smooth-side chart and avoiding $A_{\rm int}$.
Let \(\chi\) be a \(C^2\) extension of the boundary phase in this chart and set
$
  w_{\eps_\ell}:=e^{-\ii\chi}v_{\eps_\ell}.
$
Then
$
  w_{\eps_\ell}=1
  \quad\text{on the flat part of the boundary}.
$
Thus the relative boundary degree is zero.  The half-disc version of the
degree-zero convergence theorem, namely BBH Appendix~I, Theorem~A.3, combined
with the fixed-truncation Chapter~X \(H^1\)-compactness estimates in the
flattened and gauged chart, gives
$
  |w_{\eps_\ell}|\to1
  \quad\text{uniformly on }H_{r/2}(p).
$
Since
$
  |w_{\eps_\ell}|=|v_{\eps_\ell}|,
$
this contradicts \(p\in A_{\rm loc}\).  Therefore
$
  A_{\rm loc}\cap(\partial \Omega\setminus V)=\varnothing.
$
Combining this with Step~2 gives
$
  A_{\rm loc}=A_{\rm int}\subset \Omega,
$
and \eqref{pf:eq:interior-bad-set-finite} shows that this set is finite.

We now define
$
  A:=A_{\rm loc}=\{a_1,\ldots,a_N\}.
$
Here \(N=\#A\) is the number of interior limiting bad points.

\medskip
\noindent
\textbf{Step 6: absorption of the remaining bad points into shrinking vertex sectors.}
Choose \(\delta_A>0\) so small that the sectors
$
  S_j(\delta_A), j=1,\ldots,K,
$
are pairwise disjoint and satisfy
$
  A\cap\bigcup_{j=1}^K S_j(\delta_A)=\varnothing .
$
Apply Lemma~\ref{pf:lem:fixed-truncation-bad-discs} to the fixed truncation
\(\Omega_{\delta_A}\).  It gives constants
\[
  L_{\rm bad}:=L_{\delta_A},
  \qquad
  M:=M_{\delta_A},
\]
and centers \(x_{\eps_\ell,h}\), \(1\le h\le M_\ell\le M\), such that
\[
  \mathcal B_{\eps_\ell}\cap \Omega_{\delta_A}
  \subset
  \bigcup_{h=1}^{M_\ell}
  B_\Omega(x_{\eps_\ell,h},L_{\rm bad}\eps_\ell).
\]

For each vertex \(b_j\), define
\[
  r_{\eps_\ell,j}
  :=
  \eps_\ell
  +
  \sup
  \left\{
  |x-b_j|:\,
  x\in
  \mathcal B_{\eps_\ell}\cap S_j(\delta_A),
  \ x\notin
  \bigcup_{h=1}^{M_\ell}
  B_\Omega(x_{\eps_\ell,h},L_{\rm bad}\eps_\ell)
  \right\},
\]
with the convention that the supremum of the empty set is \(0\).  Then every bad
point is covered by
\[
  \bigcup_{h=1}^{M_\ell}
  B_\Omega(x_{\eps_\ell,h},L_{\rm bad}\eps_\ell)
  \cup
  \bigcup_{j=1}^K S_j(r_{\eps_\ell,j}).
\]

We prove that
$
  r_{\eps_\ell,j}\to0
$ for every $j$.
If not, then for some \(j\) there are a number \(c>0\), a subsequence, and points
$
  z_\ell\in
  \mathcal B_{\eps_\ell}\cap S_j(\delta_A)
$
not covered by the \(O(\eps_\ell)\)-balls, such that
$
  |z_\ell-b_j|\ge c.
$
After passing to a further subsequence,
$
  z_\ell\to z_0\in\overline \Omega\setminus V.
$
Since \(z_\ell\) are bad points, \(z_0\in A_{\rm loc}=A\).  But
\(z_\ell\in S_j(\delta_A)\) and \(|z_\ell-b_j|\ge c\), while
$
  A\cap S_j(\delta_A)=\varnothing
$
by the choice of \(\delta_A\).  This is a contradiction.  Thus
\(r_{\eps_\ell,j}\to0\), and the covering
\begin{equation}
  \mathcal B_{\eps_\ell}
  \subset
  \bigcup_{h=1}^{M_\ell}
  B_\Omega(x_{\eps_\ell,h},L_{\rm bad}\eps_\ell)
  \cup
  \bigcup_{j=1}^KS_j(r_{\eps_\ell,j}),
  \qquad M_\ell\le M.
  \label{eq:clearing-cover-main}
\end{equation}
follows.

\medskip
\noindent
\textbf{Step 7: local convergence away from \(A\cup V\).}
Let
$
  \mathcal K\Subset \Omega\setminus(A\cup V).
$
Choose \(m\) such that
$
  \mathcal K\Subset \Omega_m.
$
Since \(A=A_{\rm loc}\), there are no bad accumulation points in \(\mathcal K\).
Hence, for all sufficiently large \(\ell\),
$
  \mathcal B_{\eps_\ell}\cap\mathcal K=\varnothing,
$
or equivalently,
$
  |v_{\eps_\ell}|\ge1-\eta_0
$ on $\mathcal K$.

Moreover, by Step~2 all non-removable
singularities of the fixed-truncation limit in \(\Omega_m\) lie in \(A\).  
On a simply connected patch
$
  D\Subset \Omega\setminus(A\cup V)
$
and for \(\ell\) large, 
the good-region
Bethuel--Orlandi--Smets estimate, in the weighted form of
Proposition~\ref{pf:prop:weighted-bos-good}, gives for \(D'\Subset D\)
\[
  \frac1{\eps_\ell^2}
  \int_{D'}(1-|v_{\eps_\ell}|^2)^2
  \le
  C\eps_\ell^{\beta_{\rm dec}}
  \left(
  \int_D|\nabla v_{\eps_\ell}|^2
  +
  \frac1{\eps_\ell^2}\int_D(1-|v_{\eps_\ell}|^2)^2
  \right).
\]
The bracket is \(O(\ln(1/\eps_\ell))\) by reduced admissibility on the fixed
truncation.  Thus
$
  \frac1{\eps_\ell^2}
  \int_{D'}(1-|v_{\eps_\ell}|^2)^2
  \le
  C\eps_\ell^{\beta_{\rm dec}}\ln\frac1{\eps_\ell}
  \to0.
$
Consequently \(s_{\eps_\ell}\to1\) locally, and elliptic bootstrapping yields the \(C^k_{\rm loc}\) convergence.
Hence, for any $\mathcal K\Subset \Omega\setminus(A\cup V)$
\[
  v_{\eps_\ell}\to u_*
  \quad\text{in }C^k(\mathcal K)
  \quad\forall k\ge0.
\]
The local limits obtained for different \(m\)'s agree on overlaps by uniqueness of
limits, and therefore define a smooth map
\[
  u_*:\Omega\setminus(A\cup V)\to S^1.
\]

Finally, the limit is \(S^1\)-valued and satisfies the harmonic map equation
$
  -\Delta u_*=u_*|\nabla u_*|^2
  \quad\text{in }\Omega\setminus(A\cup V).
$
For each \(a_i\in A\), choose \(r>0\) so small that
$
  \partial B(a_i,r)\Subset \Omega\setminus(A\cup V).
$
The degrees
$
  d_i
  :=
  \deg(u_*,\partial B(a_i,r))
$
are the stabilized degrees of \(v_{\eps_\ell}\) on the same circles, and Step~3
shows that
$
  d_i\ne0.
$
This completes the proof.
\end{proof}
\subsection{Corner jumps and the canonical harmonic map}


\begin{definition}[Proper polygonal harmonic map]\label{def:proper-polygonal-map}
A smooth harmonic map $u:\Om\setminus(A\cup V)\to S^1$ is called proper for the full configuration $\mathcal C = (a_i,d_i;b_j,c_j)$, i.e., interior defects at $a_i$ with winding number $d_i$ and corner defects at $b_j$ with corner winding $c_j = \eta_j/2\pi$, if it has the local forms
\[
  u(z)=\alpha_i\left(\frac{z-a_i}{|z-a_i|}\right)^{d_i}e^{iH_i(z)}
  \quad\text{near }a_i,
\]
and, in sector coordinates $z=b_j+re^{i\theta}$,
\[
  u(b_j+re^{i\theta})=
  \exp\left(i\varphi_j^-+i\frac{\eta_j}{\omega_j}\theta+iH_j(r,\theta)\right)
  \quad\text{near }b_j,
\]
with $H_i,H_j$ harmonic and bounded in the corresponding neighbourhoods.
\end{definition}

\begin{lemma}[Corner jumps]\label{pf:lem:cornerjumps}
Let $u_*$ be the limit in Theorem~\ref{thm:clearing-main}. For each vertex $b_j$ there is an integer $m_j\in\Z$ such that the lifted corner jump of $u_*$ is
$\eta_j=\Phi_j+2\pi m_j=2\pi c_j$, and the degree balance is exactly
\eqref{eq:topological-balance}.
\end{lemma}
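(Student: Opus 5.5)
The plan is a winding-number computation for $u_*$ along a contour that runs along $\partial\Omega$ and skirts the vertices and the interior vortices. The steps are: define $\eta_j$ via the phase change of $u_*$ on small sector arcs, show it is an integer shift of $\Phi_j$, and sum the phase increments.

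\textbf{Setting up the contour.} Fix $r>0$ small, smaller than the distance from $A$ to $\partial\Omega$ and smaller than the radius at which the sectors $S_j(r)$ are pairwise disjoint. Let $\Omega^r:=\Omega\setminus\bigl(\bigcup_i \overline{B(a_i,r)}\cup\bigcup_j \overline{S_j(r)}\bigr)$. By Theorem~\ref{thm:clearing-main}, $u_*$ is smooth and $S^1$-valued on a neighbourhood of $\overline{\Omega^r}$ in $\overline\Omega\setminus(A\cup V)$. By Proposition~\ref{pf:prop:fixed-truncation-compactness}, its trace on each smooth side piece is $g_{\rm ph}=e^{i\varphi_b}$. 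To make the trace statement continuous up to the side pieces, I will use the half-disc $C^k$ convergence in the gauged charts. Boundary bootstrapping of $w_\eps=e^{-i\chi}v_\eps$ in the flattened charts upgrades the interior $C^k$ convergence, so $u_*\in C^1$ up to the side pieces.

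\textbf{Defining $\eta_j$ and getting $m_j$.} On the arc $\gamma_j(r)=\{b_j+re^{i\theta}:0\le\theta\le\omega_j\}$, take a continuous lift $\psi_j$ of $u_*$. Define $\eta_j(r):=\psi_j(\omega_j)-\psi_j(0)$, the total phase increment along the arc. The endpoints of the arc lie on the two adjacent sides, where $u_*=e^{i\varphi_b}$. Hence $\eta_j(r)\equiv\varphi_b(b_j+re^{i\omega_j})-\varphi_b(b_j+r)\pmod{2\pi}$, with the side labels fixed by the orientation convention in the definition of $\varphi_j^\pm$. The same holds for every $r'\in(0,r]$, since there are no singularities inside $S_j(r)\setminus S_j(r')$. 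The function $r'\mapsto \eta_j(r')-\bigl[\varphi_b(\cdot)-\varphi_b(\cdot)\bigr](r')$ is continuous and $2\pi\Z$-valued, so it is constant. Letting $r'\to0$ and using the $C^2$ regularity of $\varphi_b$ up to $b_j$ gives $\eta_j:=\lim_{r'\to0}\eta_j(r')=\Phi_j+2\pi m_j$ for some $m_j\in\Z$. Set $c_j:=\eta_j/2\pi$.

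\textbf{Degree balance.} Apply the argument principle for $S^1$-valued maps on $\Omega^r$. The total phase increment of $u_*$ along $\partial\Omega^r$, positively oriented, is zero, because $u_*$ is smooth on the simply connected-minus-holes region. Equivalently, it equals the sum over the inner boundary circles, which is $2\pi\sum_i d_i$ by \eqref{eq:interior-degree-main}. The outer part of $\partial\Omega^r$ has two kinds of pieces:
\begin{itemize}
\item the side pieces, contributing $\int_{\partial\Omega\cap\partial\Omega^r} q\,\dd s$ by \eqref{eq:boundary_current};
\item the sector arcs, traversed clockwise relative to $b_j$ but counterclockwise as part of $\partial\Omega^r$, so the sign must be tracked carefully.
\end{itemize}
With the orientation convention the paper uses (phase increasing along $\partial\Omega$, arcs entering with the jump), the arcs contribute $\sum_j\eta_j(r)$, giving
\[2\pi\sum_i d_i=\int_{\partial\Omega\cap\partial\Omega^r}q\,\dd s+\sum_j\eta_j(r).\]
Since $\varphi_b\in C^2(\overline\Gamma_j)$, $q$ is integrable up to the vertices. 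Letting $r\to0$ yields \eqref{eq:topological-balance}.

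\textbf{Main obstacle.} I expect the hardest step to be the boundary regularity of $u_*$ up to the side pieces near the sector arcs, i.e.\ continuity of the lift at the arc endpoints. For this I would use the gauged half-disc charts of Proposition~\ref{prop:fixed-truncation-main} and Step~5 of Theorem~\ref{thm:clearing-main}, which give uniform $C^1$ control of $w_\eps$ up to the flat boundary. The second place needing care is the consistent sign convention between $\Phi_j$ and the arc orientation, which I would fix once so that it matches Definition~\ref{def:proper-polygonal-map}, where $u$ runs from $\varphi_j^-$ at $\theta=0$ to $\varphi_j^-+\eta_j$ at $\theta=\omega_j$.
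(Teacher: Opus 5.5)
Your strategy is the paper's own. Its proof is a one-line appeal to integrating the lifted phase along the oriented boundary of the punctured domain, and your continuity-in-$r'$ argument showing $\eta_j-\Phi_j\in2\pi\Z$ is a sound way to make the branch choice precise.

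The balance step, however, has a sign error exactly where you flagged the danger. With $\partial\Omega^r$ positively oriented, the arc $\gamma_j(r)$ is run clockwise about $b_j$, from $\theta=\omega_j$ to $\theta=0$. So with your definition $\eta_j(r)=\psi_j(\omega_j)-\psi_j(0)$ it contributes $-\eta_j(r)$, not $+\eta_j(r)$.

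Your proposed fix, matching Definition~\ref{def:proper-polygonal-map} with $\varphi_j^-$ at $\theta=0$, does not help. In the positive orientation the ray $\theta=\omega_j$ is the side that precedes $b_j$, so that labelling makes $\Phi_j$ the clockwise jump. Then $\int_{\partial\Omega\setminus V}q\,\dd s=\sum_j\Phi_j$ for the counterclockwise current, and the correct identity $2\pi\sum_id_i=\int q\,\dd s-\sum_j\eta_j$ reduces to $\sum_id_i=-\sum_jm_j$. That is not \eqref{eq:topological-balance}.

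The consistent convention is the one the rest of the paper actually uses: the flux condition $\int_{\Gamma_{wj}(\rho)}\partial_\nu\Phi_\rho\,\dd s=\eta_j$ in \eqref{eq:Phi-rho-problem-main} with $\tau=J\nu$, and the exponent $-\eta_j/\omega_j$ in \eqref{eq:canonical-map-main}. Label the sides so that $\Gamma_{j-1}\to b_j\to\Gamma_j$ is counterclockwise, so $\Gamma_{j-1}$ is the ray $\theta=\omega_j$. Then set $\eta_j(r):=\psi_j(0)-\psi_j(\omega_j)$, the increment in the direction the arc is actually traversed. With this choice:
\begin{itemize}
\item $\eta_j(r)\to\Phi_j+2\pi m_j$, and the arcs enter with a plus sign;
\item $\int_{\partial\Omega\setminus V}q\,\dd s+\sum_j\Phi_j=0$ by telescoping over the sides;
\item the winding identity becomes simply $\sum_id_i=\sum_jm_j$, which is exactly \eqref{eq:topological-balance}.
\end{itemize}
The $+\eta_j\theta/\omega_j$ form in Definition~\ref{def:proper-polygonal-map} is the one piece of the paper inconsistent with this, so it is the wrong thing to match.

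On the regularity point, do not rely on uniform $C^1$ (let alone $C^k$) convergence of $w_\eps$ up to the sides. Wherever $\rho_b<1$, and in particular near every vertex, $\rho_\eps$ has an $\eps$-boundary layer with $|\nabla\rho_\eps|\sim\eps^{-1}$. So the coefficient of the phase equation is not uniformly H\"older. Flux continuity across the layer also makes the normal derivative of the phase of $v_\eps$ on the side differ from that of $u_*$ by roughly the factor $\rho_b^{-2}$.

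You only need that $u_*$ extends continuously to the open sides with value $g_{\rm ph}$. This follows from weaker estimates:
\begin{itemize}
\item By Step~5 of Theorem~\ref{thm:clearing-main}, together with $A_{\rm loc}=A$, a fixed closed half-disc at a side point contains no bad points for large $\ell$.
\item So $v_\eps$ has a single-valued phase there, solving $\divop(\rho_\eps^2|v_\eps|^2\nabla\psi_\eps)=0$ with uniformly elliptic, merely bounded measurable coefficients and $C^2$ Dirichlet data.
\item The $W^{1,p}$ bound and the boundary De Giorgi--Nash--Moser estimate then give uniform H\"older bounds up to the flat part, hence uniform convergence there.
\end{itemize}
That is all your continuity-in-$r'$ argument and the winding identity require.
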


\begin{proof}[Reference]
Integrating the lifted phase along the oriented boundary of the punctured domain gives the sum of interior degrees plus the boundary winding and the integer choices of branch across vertices. This is the same topological compatibility used in \cite{ASzeros}.
\end{proof}

\begin{lemma}[Vertex tightness implies properness]
\label{pf:lem:vertex-properness}
The limit $u_*$ in Theorem~\ref{thm:clearing-main}, together with the
vertex logarithmic tightness \eqref{eq:vertex-log-tightness}, has at each
$b_j$ the proper local corner form in
Definition~\ref{def:proper-polygonal-map}.
\end{lemma}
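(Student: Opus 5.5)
We work in a small sector $S_j(r_0)$ around a fixed vertex $b_j$ and compare the limit $u_*$ with the explicit corner phase. By Theorem~\ref{thm:clearing-main}, after shrinking $r_0$ so that $S_j(r_0)\cap A=\varnothing$, the limit $u_*$ is a smooth $S^1$-valued harmonic map on $S_j(r_0)$. It has trace $g_{\rm ph}$ on the two straight (or flattened) sides, by Proposition~\ref{pf:prop:fixed-truncation-compactness}. Since $S_j(r_0)$ is simply connected, we can write $u_*=e^{i\psi}$ with $\psi$ harmonic. After a $C^2$ boundary gauge we may assume $\varphi_b$ is constant near $b_j$ on each side. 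This uses the diffeomorphism $\Psi_j$; its conformal defect is $O(r)$ and only produces bounded harmonic corrections. The boundary values of $\psi$ are then $\varphi_j^-$ on $\theta=0$ and $\varphi_j^-+\eta_j+2\pi k$ on $\theta=\omega_j$, for some integer $k$. Absorbing $k$ into $m_j$ fixes the lift given by Lemma~\ref{pf:lem:cornerjumps}.

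First, I transfer the tightness bound \eqref{eq:vertex-log-tightness} to the limit. On each annular sector $S_j(r_0)\setminus S_j(r)$ with fixed $r>0$, the convergence $v_{\eps_\ell}\to u_*$ is $C^1$ on compact subsets. Moreover $\rho_{\eps_\ell}\to1$ locally uniformly in the interior, by Lemma~\ref{pf:lem:scalar}(iii). Fatou's lemma therefore gives
\[
\int_{S_j(r_0)\setminus S_j(r)}|\nabla\psi|^2\,dx\le C_j\Bigl(1+\ln\frac{r_0}{r}\Bigr)\qquad\text{for all }0<r<r_0 .
\]
Next, set $h:=\psi-\varphi_j^--\frac{\eta_j}{\omega_j}\theta$. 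This function is harmonic in the sector and vanishes on both sides. Separating variables gives
\[
h=\sum_{n\ge1}\bigl(A_n r^{n\pi/\omega_j}+B_n r^{-n\pi/\omega_j}\bigr)\sin\frac{n\pi\theta}{\omega_j}.
\]
The negative modes $B_n r^{-n\pi/\omega_j}$ are exactly the ``zero-trace negative sector modes'' that tightness is meant to exclude.

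The main step is to show $B_n=0$ for all $n$. By orthogonality of the modes on each arc, the Dirichlet energy of $\psi$ on the annular sector is the sum of three parts: the energy of the linear part, which is $\frac{\eta_j^2}{\omega_j}\ln\frac{r_0}{r}$; the mode energies; and a cross term. The cross term vanishes, because the derivatives of the linear part, $\partial_\theta(\eta_j\theta/\omega_j)$ and its $r$-derivative, are orthogonal to every $\sin$ mode and to the $r$-derivative of every mode. The mode energy for index $n$ is
\[
\sim \frac{n\pi}{2}\bigl(|B_n|^2 r^{-2n\pi/\omega_j}-|B_n|^2r_0^{-2n\pi/\omega_j}\bigr)+O(1),
\]
which grows like a power of $1/r$, up to mixed $A_nB_n$ terms that are $O(\ln)$-free constants. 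A power of $1/r$ contradicts the logarithmic bound as $r\to0$ unless $B_n=0$. A convenient way to organise this is to take a single $n$, test the energy identity on the arc $\partial S_j(r)$, and use the monotone quantity $\int_{\partial S_j(r)}|h|^2\,d\theta$. The step I expect to be delicate is justifying termwise control uniformly in $n$, and handling the curvilinear case, where $\psi$ is only harmonic for the pulled-back metric. For the curvilinear case I would first try to absorb the $O(r)$ metric error as a bounded perturbation using the estimate $(D\Psi_j)^TD\Psi_j=Q_j^TQ_j+O(r)$, so that the singular part is still governed by the flat Fourier analysis.

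With $B_n=0$, the function $H_j:=h=\sum A_n r^{n\pi/\omega_j}\sin(n\pi\theta/\omega_j)$ is harmonic and bounded near $b_j$; in fact it is Hölder and vanishes at $b_j$. Hence
\[
u_*=\exp\bigl(i\varphi_j^-+i\tfrac{\eta_j}{\omega_j}\theta+iH_j\bigr),
\]
which is the proper corner form in Definition~\ref{def:proper-polygonal-map}. This argument is carried out at each vertex separately.
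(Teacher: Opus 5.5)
Your proposal follows the paper's proof step for step: a harmonic lift on the simply connected sector, subtraction of the linear angular term $\varphi_j^-+\frac{\eta_j}{\omega_j}\theta$, transfer of the tightness bound to $u_*$ by lower semicontinuity, expansion in sector modes, exclusion of the $r^{-n\pi/\omega_j}$ modes by their power-law energy growth, and the curvilinear case treated as an $O(r)$ perturbation. Your explicit mode computation makes the paper's one-line claim precise: the $A_nB_n$ cross terms actually cancel and every mode energy is nonnegative, so no uniformity in $n$ is needed. The one step to fix is the side-phase normalization: a general $C^2$ gauge would destroy the harmonicity of $h$, so, as the paper does, subtract instead the bounded harmonic extension of the non-constant side traces (this extension also has finite energy, because those traces are $C^2$ and vanish at $b_j$).
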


\begin{proof}
Choose a vertex sector containing no interior point of $A$.  Since the
punctured sector is simply connected, $u_*$ has a harmonic phase lift
$\phi_j$.  Select the lift supplied by Lemma~\ref{pf:lem:cornerjumps} and
write
\[
  \phi_j(r,\vartheta_j)
  =\varphi_j^-+\frac{\eta_j}{\omega_j}\vartheta_j
  +H_j(r,\vartheta_j).
\]
The side traces of $H_j$ are bounded and have matching limits at the
vertex.  For every fixed $r>0$, local convergence and
\eqref{eq:vertex-log-tightness} give, by lower semicontinuity,
\[
  \int_{S_j(r_0)\setminus S_j(r)}|\nabla u_*|^2\,\dd x
  \leq C_j\left(1+\ln\frac{r_0}{r}\right).
\]
The explicit angular term has the same logarithmic growth, so $H_j$
satisfies an analogous bound.

In a straight sector, subtract the bounded harmonic extension of the two
side traces and expand the remaining zero-side harmonic function in
sector modes.  Its possible singular terms are
\[
  r^{-n\pi/\omega_j}\sin\left(\frac{n\pi\vartheta_j}{\omega_j}\right),
  \qquad n\geq1.
\]
Every nonzero such term has Dirichlet energy growing as a negative power
of $r$, contradicting the logarithmic bound.  Hence only the bounded
positive sector modes remain.  Under the $C^2$ sector chart, the transformed
equation is a uniformly elliptic perturbation whose coefficients differ
from the constant sector coefficients by $O(r)$; the same corner expansion
and exclusion of negative modes applies.  Thus $H_j$ is bounded at $b_j$,
which is exactly the required properness.
\end{proof}

\begin{proposition}[Canonical polygonal harmonic map]
\label{prop:canonical-main}
For each vertex, there is $m_j\in\Z$ such that
$\eta_j=\Phi_j+2\pi m_j=2\pi c_j$, and the balance law
\eqref{eq:topological-balance} holds.  Hence the  canonical harmonic map (this term comes from BBH theory)
$u_{\mathcal C}$ is well
defined as 
\begin{equation} \label{eq:canonical-map-main}
  u_{\mathcal{C}}(z)=\e^{iH_{\mathcal{C}}}
  \prod_{i=1}^N\left(\frac{z-a_i}{|z-a_i|}\right)^{d_i}
  \prod_{j=1}^K\left(\frac{z-b_j}{|z-b_j|}\right)^{-\frac{\eta_j}{\omega_j}},
\end{equation}
where $\omega_j$ is the inner angle of $b_j$, 
and $H_\mathcal{C}$ is the harmonic solution of the zero-degree boundary problem in 
\begin{equation}\label{eq:canonical-harmonic-correction}
\begin{cases}
  \Delta H_{\mathcal{C}} = 0,
  & \text{in }\Omega,\\[2mm]
  H_{\mathcal{C}} = h_{\mathcal{C}},
  & \text{on }\partial\Omega.
\end{cases}
\end{equation} 
where
\begin{equation}\label{eq:canonical-boundary-lift}
e^{i h_{\mathcal{C}}(z)} = e^{i\varphi_b(z)}\frac{|z-a_1|^{d_1}}{(z-a_1)^{d_1}}\ldots\frac{|z-a_N|^{d_N}}{(z-a_N)^{d_N}}\frac{|z-b_1|^{-\frac{\eta_1}{\omega_1}}}{(z-b_1)^{-\frac{\eta_1}{\omega_1}}}\ldots\frac{|z-b_K|^{-\frac{\eta_K}{\omega_K}}}{(z-b_K)^{-\frac{\eta_K}{\omega_K}}}
\end{equation}
satisfying $u_{\mathcal{C}}(z) = e^{ih_{\mathcal{C}}}\frac{(z-a_1)^{d_1}}{|z-a_1|^{d_1}}\ldots\frac{(z-a_N)^{d_N}}{|z-a_N|^{d_N}} \frac{(z-b_1)^{-\frac{\eta_1}{\omega_1}}}{|z-b_1|^{-\frac{\eta_1}{\omega_1}}}\ldots\frac{(z-b_K)^{-\frac{\eta_K}{\omega_K}}}{|z-b_K|^{-\frac{\eta_K}{\omega_K}}}=e^{i\varphi_b(z)}$ on $\Gamma_j$. 
The map $u_{\mathcal C}$ is the unique proper harmonic map with
the prescribed normalized side traces $\varphi_b$, interior degrees $d_i$, and lifted corner
jumps $\eta_j$.  The limit in Theorem~\ref{thm:clearing-main} is
$u_*=u_{\mathcal C}$.
\end{proposition}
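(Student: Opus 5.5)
We split the proof into three parts: existence of the lifts and the balance law, well-definedness of the formula, and uniqueness, which then identifies the limit.

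\emph{Lifts and balance law.} The integers $m_j$ and the identity \eqref{eq:topological-balance} are exactly Lemma~\ref{pf:lem:cornerjumps}. To make this concrete we use the following argument.
\begin{enumerate}[label=(\alph*)]
\item Fix small disjoint discs $B(a_i,r)$ and sectors $S_j(r)$, and let $\Omega^r$ be the complement of their union in $\Omega$.
\item On $\partial\Omega^r$ the map $u_*$ is continuous and $S^1$-valued: on the sides this uses the trace $g_{\rm ph}$ from Proposition~\ref{pf:prop:fixed-truncation-compactness}, and elsewhere the smoothness from Theorem~\ref{thm:clearing-main}.
\item Hence $\deg(u_*,\partial\Omega^r)=0$, since $u_*$ is smooth and unimodular inside $\Omega^r$.
\item Split this degree into three contributions: the side integrals of $q$, the interior circles giving $-\sum_i d_i$, and the arcs $\partial S_j(r)\cap\Omega$.
\item On each arc, the increment of a continuous lift of $u_*$ from the $\Gamma_{j-1}$ side to the $\Gamma_j$ side equals $\Phi_j$ plus an integer multiple of $2\pi$, up to $o(1)$ as $r\to0$. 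This integer is by definition $m_j$, so each arc contributes $\eta_j$.
\item Letting $r\to0$ gives \eqref{eq:topological-balance}.
\end{enumerate}

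\emph{Well-definedness of $u_{\mathcal C}$.} Choose branches of $\arg(z-a_i)$ and $\arg(z-b_j)$ continuous on $\partial\Omega\setminus V$, with the branch cut of each $\arg(z-b_j)$ leaving $\Omega$ through $b_j$ itself. Then $h_{\mathcal C}$ in \eqref{eq:canonical-boundary-lift} is a single-valued real function on each side. Consider its jump at a vertex $b_j$.
\begin{enumerate}[label=(\alph*)]
\item $\varphi_b$ jumps by $\Phi_j$.
\item The factor $(z-b_j)^{-\eta_j/\omega_j}$ contributes $+\eta_j$, because the argument sweeps the angle $\omega_j$ between the two sides.
\item Going around $\partial\Omega$, the factors at $a_i$ and at $b_k$ with $k\neq j$ contribute total windings $-d_i$ and $-\eta_k\frac{2\pi-\omega_k}{\omega_k}\cdot\frac{1}{2\pi}$-type corrections.
\end{enumerate}
Bookkeeping with \eqref{eq:topological-balance} shows that after adding the integers $2\pi m_j$ the jumps of $h_{\mathcal C}$ vanish modulo the choices, so $h_{\mathcal C}$ is continuous and bounded on $\partial\Omega$. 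Since $\rho_b$ does not enter here, $h_{\mathcal C}$ is in fact piecewise $C^2$ with bounded oscillation. The Perron/harmonic extension $H_{\mathcal C}$ then exists, is bounded, and is continuous up to the sides. By construction $u_{\mathcal C}$ has trace $e^{i\varphi_b}$, degree $d_i$ at $a_i$, and local form $\exp(i\varphi_j^-+i\eta_j\theta/\omega_j+iH_j)$ at $b_j$ with $H_j$ bounded. So $u_{\mathcal C}$ is proper in the sense of Definition~\ref{def:proper-polygonal-map}.

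\emph{Uniqueness and $u_*=u_{\mathcal C}$.} This is the main step. By Lemma~\ref{pf:lem:vertex-properness}, $u_*$ is proper for the same configuration, with the same side traces, degrees and lifted jumps. Set $w=u_*\overline{u_{\mathcal C}}$, a harmonic map into $S^1$ on $\Omega\setminus(A\cup V)$.
\begin{enumerate}[label=(\alph*)]
\item The degrees of $w$ around each $a_i$ are zero, and the lifted jumps of $w$ at each $b_j$ are zero.
\item Since $\Omega$ is simply connected, $w=e^{i\psi}$ with $\psi$ single-valued and harmonic off $A\cup V$.
\item Properness of both maps makes $\psi$ bounded near every point of $A\cup V$.
\item By removability of isolated singularities of bounded harmonic functions in two dimensions, $\psi$ extends harmonically across $A$.
\item At the vertices, $\psi$ is bounded with trace in $2\pi\mathbb Z$ on each side.
\end{enumerate}
The delicate point is to show that these side constants coincide across each vertex and are not altered by an integer jump there. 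This is where the equality of the lifted jumps $\eta_j$, rather than merely their values modulo $2\pi$, is used: it forces equal boundary constants on adjacent sides. Hence $\psi$ equals a constant $2\pi k$ on $\partial\Omega\setminus V$ and is bounded. The Phragm\'en--Lindel\"of / maximum principle for bounded harmonic functions on a domain with finitely many exceptional boundary points then gives $\psi\equiv 2\pi k$, so $u_*=u_{\mathcal C}$.
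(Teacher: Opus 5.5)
Your proposal follows essentially the same route as the paper. The balance law comes from the winding of the lifted phase along the boundary of the punctured domain (Lemma~\ref{pf:lem:cornerjumps}). Then come gluing of the sidewise boundary lift and harmonic extension. Uniqueness goes through the quotient $u_*\overline{u_{\mathcal C}}$, and the limit is identified via Lemma~\ref{pf:lem:vertex-properness}. Your uniqueness step is more explicit than the paper's ``solves the homogeneous Dirichlet problem'': you match the side constants using equality of the lifted jumps, then apply the maximum principle for bounded harmonic functions with finitely many exceptional boundary points.

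One correction is needed in item (c) of your well-definedness bookkeeping. A branch of $\arg(z-b_k)$ continuous on $\partial\Omega\setminus\{b_k\}$ increases by exactly $\omega_k$ along it. So the term $\frac{\eta_k}{\omega_k}\arg(z-b_k)$ of $h_{\mathcal C}$ contributes winding $+c_k$, not a $(2\pi-\omega_k)/\omega_k$-type term. With this, the local jump of $h_{\mathcal C}$ at $b_j$ is $\Phi_j-\eta_j=-2\pi m_j$, which integer shifts on the sides remove. The only global obstruction to gluing is then the vanishing of the total side increment $\int_{\partial\Omega\setminus V}q\,ds-2\pi\sum_i d_i+\sum_k\eta_k$, and that is exactly \eqref{eq:topological-balance}.
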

\begin{proof}[Proof of Proposition~\ref{prop:canonical-main}]
The balance law \eqref{eq:topological-balance} is precisely the condition
that the sidewise expressions in \eqref{eq:canonical-boundary-lift} can be
chosen to glue to a single-valued boundary lift.  Their endpoint values
match after the lifted jumps have been subtracted, and their restrictions
to the open sides are $C^2$.  The same cross-side estimate used for $g$ in
Appendix~A therefore gives
$h_{\mathcal C}\in H^{1/2}(\partial \Omega)$.  The Dirichlet problem
\eqref{eq:canonical-harmonic-correction} consequently has a unique weak
solution.  The map in \eqref{eq:canonical-map-main} has the prescribed side
trace, interior degrees, and corner jumps, and is proper in the sense of
Definition~\ref{def:proper-polygonal-map}.  If two proper maps have the
same data, their quotient has zero interior degree, no corner jump, and
unit boundary trace.  Its harmonic phase lift solves the homogeneous
Dirichlet problem and hence vanishes, proving uniqueness.  This is the BBH
canonical-map construction \cite[Corollaries I.1--I.2]{BBH}, with the
fixed sector branches included among the explicitly subtracted factors.

Finally, Theorem~\ref{thm:clearing-main} gives a harmonic $S^1$ map with
the prescribed trace and the interior degrees, while
Lemma~\ref{pf:lem:cornerjumps} supplies its corner branch data and
Lemma~\ref{pf:lem:vertex-properness} supplies properness at the vertices.
Uniqueness therefore identifies that limit with
\eqref{eq:canonical-map-main}.
\end{proof}

\subsection{Stream function and finite-part energy}
For $\rho>0$ smaller than all mutual distances in the configuration, set
\begin{equation}
  \Omega_\rho:=\Omega\setminus\left(
  \bigcup_{i=1}^NB(a_i,\rho)\cup
  \bigcup_{j=1}^K[\Omega\cap B(b_j,\rho)]\right).
  \label{eq:punctured-domain-main}
\end{equation}
Its physical and artificial boundary pieces are
\begin{equation}
  \Gamma_\rho:=\partial \Omega\cap\partial \Omega_\rho,
  \qquad
  \Gamma_{vi}(\rho):=\partial B(a_i,\rho),
  \qquad
  \Gamma_{wj}(\rho):=\partial B(b_j,\rho)\cap \Omega.
  \label{eq:punctured-boundary-main}
\end{equation}
\begin{figure}
    \centering
    \includegraphics[width=0.4\linewidth]{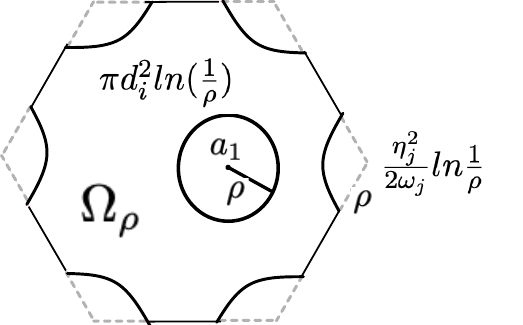}
    \caption{Shrinking-hole definition of the renormalized energy \(W\).  The finite part is computed on
\(\Omega_\rho\), obtained by removing discs around interior defects $a_i$ and sectors
around fixed vertices $b_j$.  Interior holes contribute \(\pi d_i^2\ln(\frac{1}{\rho})\);
corner sectors contribute \(\frac{\eta_j^2}{2\omega_j}\ln(\frac{1}{\rho})\).}
    \label{fig:Omega_rho}
\end{figure}

\begin{proposition}[Linear reduction and finite-part energy]
\label{prop:finite-part-main}
Let $\Phi_\rho$ be the normalized solution, with unknown constants
$D_i(\rho)$ and $C_j(\rho)$, of
\begin{equation}
\left\{
\begin{aligned}
  \Delta\Phi_\rho&=0 &&\text{in }\Omega_\rho,\\
  \Phi_\rho&=D_i(\rho) &&\text{on }\Gamma_{vi}(\rho),\\
  \Phi_\rho&=C_j(\rho) &&\text{on }\Gamma_{wj}(\rho),\\
  \int_{\Gamma_{vi}(\rho)}\partial_\nu\Phi_\rho\,\dd s
  &=-2\pi d_i,\\
  \int_{\Gamma_{wj}(\rho)}\partial_\nu\Phi_\rho\,\dd s
  &=\eta_j,\\
  \partial_\nu\Phi_\rho&=q &&\text{on }\Gamma_\rho,\\
  \int_{\Gamma_\rho}\Phi_\rho\,\dd s&=0.
\end{aligned}
\right.
\label{eq:Phi-rho-problem-main}
\end{equation}
Here $\nu$ is the outward normal of $\Omega_\rho$; in particular it points into
the removed holes on the artificial boundaries.  Let $\Xi_\rho$ be the
class of $H^1(\Omega_\rho;\Sone)$ maps with physical-boundary trace
$g_{\rm ph}$, interior degrees $d_i$, and corner-sector phase increments
$\eta_j$.  Then the BBH linear reduction gives
\begin{equation}
  \inf_{v\in\Xi_\rho}\int_{\Omega_\rho}|\nabla v|^2\,\dd x
  =\int_{\Omega_\rho}|\nabla\Phi_\rho|^2\,\dd x.
  \label{eq:linear-reduction-main}
\end{equation}
\end{proposition}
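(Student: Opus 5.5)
We prove \eqref{eq:linear-reduction-main} by the BBH duality between the phase of an $\Sone$-valued map and a stream function, adapted to the perforated domain $\Omega_\rho$. The order is: solve \eqref{eq:Phi-rho-problem-main}, build a competitor in $\Xi_\rho$ from it, and show every competitor has at least as much energy.

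\textbf{Step 1: well-posedness of \eqref{eq:Phi-rho-problem-main}.} We minimize
\[
\frac12\int_{\Omega_\rho}|\nabla\Phi|^2-\int_{\Gamma_\rho}q\,\Phi\,\dd s+\sum_i 2\pi d_i D_i-\sum_j\eta_j C_j
\]
over $H^1(\Omega_\rho)$ functions that are constant on each artificial boundary piece and satisfy $\int_{\Gamma_\rho}\Phi=0$. The Euler--Lagrange equations are exactly the system \eqref{eq:Phi-rho-problem-main}. The linear functional is compatible with additive constants precisely because of the balance law \eqref{eq:topological-balance}: the total flux through $\partial\Omega_\rho$ is $\int q-2\pi\sum d_i+\sum\eta_j=0$. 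Coercivity then follows from a Poincaré inequality with the $\Gamma_\rho$-mean normalization, and uniqueness follows from strict convexity.

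\textbf{Step 2: construction of a minimizer.} Set $j:=-\nabla^\perp\Phi_\rho$, which is divergence free and curl free in $\Omega_\rho$. Its circulation around each hole is determined by the prescribed fluxes: it is $2\pi d_i$ around $a_i$, and $\eta_j$ across the arc $\Gamma_{wj}(\rho)$. On the physical sides, $j\cdot\tau=\partial_\nu\Phi_\rho=q$.

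Since $\Omega$ is simply connected, every closed loop in $\Omega_\rho$ is homologous to a combination of the circles $\Gamma_{vi}$. The corner arcs connect physical sides, so there the constraint appears as a phase increment rather than a circulation. Hence $j=\nabla\psi$ for a multivalued phase $\psi$ with periods $2\pi d_i$. Choose the additive constant so that $e^{i\psi}=g_{\rm ph}$ at one point of $\Gamma_\rho$.

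Because $\partial_\tau\psi=q=\partial_\tau\varphi_b$ on each side, the trace is $g_{\rm ph}$ along that side. Moving across a corner arc, the increment $\eta_j=\Phi_j+2\pi m_j$ transports the matching to the next side. Thus $u_\rho:=e^{i\psi}\in\Xi_\rho$ and
\[
\int|\nabla u_\rho|^2=\int|\nabla\Phi_\rho|^2 .
\]

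\textbf{Step 3: lower bound, the key step.} Let $v\in\Xi_\rho$. Its current $j_v:=v\times\nabla v$ satisfies $\divop$-duality in the following sense. Write $j_v=\nabla^\perp(-\Phi_\rho)+f$. Then $f$ has zero circulation around each $a_i$, has zero tangential trace on $\Gamma_\rho$ (both traces are $q$), and has zero increment across each corner arc.

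Compute
\[
\int_{\Omega_\rho}f\cdot\nabla^\perp\Phi_\rho .
\]
Integrate by parts on each piece of $\partial\Omega_\rho$; $\Phi_\rho$ is smooth up to these pieces, so the boundary terms are legitimate. On the artificial pieces $\Phi_\rho$ is constant, so each term is a constant times the circulation or increment of $f$, which vanishes. On $\Gamma_\rho$ the term involves the tangential trace of $f$, which is zero. Hence the integral is $0$.

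By orthogonality,
\[
\int|\nabla v|^2\ge\int|j_v|^2=\int|\nabla\Phi_\rho|^2+\int|f|^2\ge\int|\nabla\Phi_\rho|^2 .
\]
Combined with Step 2, this gives \eqref{eq:linear-reduction-main}.

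\textbf{Main obstacle.} The delicate point is giving meaning to circulation and increment for merely $H^1$ maps $v$ on the corner arcs, and justifying the integration by parts there. I would handle this by density of smooth $\Sone$ maps in $H^1(\Omega_\rho;\Sone)$ in two dimensions, which holds away from singularities since $\Omega_\rho$ is a Lipschitz domain. The degrees and increments are stable under $H^1$ convergence through the $H^{1/2}$ trace, by the Brezis--Nirenberg VMO degree theory. A second check is that the corner points where $\Gamma_{wj}(\rho)$ meets the sides are only Lipschitz corners. There I would rely on the flux condition together with the Neumann-type regularity of $\Phi_\rho$, which bounds $\nabla\Phi_\rho$ in $L^2$ near those points; that bound is enough for the pairing above.
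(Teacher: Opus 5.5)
Your architecture is sound: solve the stream-function problem variationally, build the competitor $u_\rho=e^{\ii\psi}$, and split $j_v=j_{u_\rho}+f$ orthogonally. But Steps~1 and~2 each assert something that is false for $\rho>0$, at the corner arcs. Let $p_j^\pm(\rho)$ be the endpoints of $\Gamma_{wj}(\rho)$ on the two sides adjacent to $b_j$. Let $\sigma_j(\rho):=\int_{\partial\Omega\cap B(b_j,\rho)}q\,\dd s=\bigl(\varphi_b(p_j^+)-\varphi_j^+\bigr)-\bigl(\varphi_b(p_j^-)-\varphi_j^-\bigr)$ be the increment of the boundary phase along the two side pieces cut off by the sector. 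It is $O(\rho)$ but in general nonzero, for instance whenever $q>0$ near the vertices.

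The balance law \eqref{eq:topological-balance} involves $\int_{\partial\Omega\setminus V}q$, whereas the Neumann data in \eqref{eq:Phi-rho-problem-main} live only on $\Gamma_\rho$. So the total flux in Step~1 is $\int_{\Gamma_\rho}q-2\pi\sum_id_i+\sum_j\eta_j=-\sum_j\sigma_j(\rho)$, not $0$. Consequently:
\begin{itemize}
\item your functional is not invariant under constants;
\item its normalized minimizer satisfies $\partial_\nu\Phi=q+\lambda$ on $\Gamma_\rho$ with $\lambda=\sum_j\sigma_j(\rho)/|\Gamma_\rho|$;
\item \eqref{eq:Phi-rho-problem-main} as written has no solution unless $\sum_j\sigma_j(\rho)=0$.
\end{itemize}
The same quantity breaks the gluing in Step~2. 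Any map with trace $g_{\rm ph}$ on $\Gamma_\rho$ has phase increment along $\Gamma_{wj}(\rho)$ congruent modulo $2\pi$ to $\varphi_b(p_j^+)-\varphi_b(p_j^-)=\Phi_j+\sigma_j(\rho)$. An increment of exactly $\eta_j$ therefore leaves $e^{\ii\psi}$ off from $g_{\rm ph}$ by the constant factor $e^{-\ii\sigma_j(\rho)}$ on the next side. If the arc increment in $\Xi_\rho$ is required to be exactly $\eta_j$, the class is empty unless every $\sigma_j(\rho)$ vanishes.

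The inconsistency is already in how the statement is posed. The paper's proof only sets up the dictionary $j(v)=J\nabla\Phi_\rho$ and cites BBH Theorems~I.1--I.2, so it does not address this either. Your argument, however, explicitly asserts the two claims that fail.

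The fix is to prescribe the branch integers $m_j$ rather than the real numbers $\eta_j$. Replace $\eta_j$ by $\eta_j(\rho):=\eta_j+\sigma_j(\rho)$, both as the arc flux in \eqref{eq:Phi-rho-problem-main} and as the arc increment defining $\Xi_\rho$. Then the balance law gives exact compatibility, the gluing in Step~2 works on every side, and Step~3 is unchanged. Since $\eta_j(\rho)-\eta_j=O(\rho)$, nothing changes as $\rho\to0$.

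With this correction your proof is complete, and it is more self-contained than the paper's route. The paper identifies $\Phi_\rho$ through the natural condition $j(v)\cdot\nu=0$ on the artificial boundaries, that is, through a minimizer and its Euler--Lagrange equation as in BBH. Your competitor-plus-orthogonality argument gives both inequalities directly and needs no existence or regularity theory for minimizers.

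Two smaller points in Step~3:
\begin{enumerate}
\item The integration by parts also produces the interior term $\int_{\Omega_\rho}\Phi_\rho\operatorname{curl}f$. It vanishes because $\operatorname{curl}(v\times\nabla v)=2\,\partial_1v\times\partial_2v=0$ for $\Sone$-valued $H^1$ maps, and this should be stated.
\item Your ``main obstacle'' can be bypassed without density arguments. Since all hole degrees cancel, $v\,\overline{u_\rho}=e^{\ii w}$ with $w\in H^1(\Omega_\rho)$, so $f=\nabla w$ and $\int_{\Omega_\rho}\nabla w\cdot J\nabla\Phi_\rho=-\int_{\partial\Omega_\rho}w\,\partial_\tau\Phi_\rho\,\dd s$ with $\tau=J\nu$. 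On the artificial boundaries $\partial_\tau\Phi_\rho=0$, and $w$ is a constant in $2\pi\Z$ on each side piece. The boundary integral therefore telescopes to $\sum_jC_j(\rho)\,\delta_j$, where $\delta_j$ is the difference of the values of $w$ on the two sides adjacent to $\Gamma_{wj}(\rho)$. Each $\delta_j$ equals the difference of the arc increments of $v$ and $u_\rho$, hence zero.
\end{enumerate}
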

\begin{proof}
The normalized stream function is the solution of
\eqref{eq:Phi-rho-problem-main}; its uniqueness, with the unknown constant
traces determined by the flux constraints, is the standard linear
shrinking-hole construction \cite{BerestyckiBrezis1980FreeBoundaryPlasma,BBH}.

The purpose of the stream function is to reduce the nonlinear
$S^1$-valued harmonic-map problem to a linear scalar problem. For an
$S^1$-valued map $v$, define its supercurrent by
\[
j(v):=v\times\nabla v.
\]
Locally, if $v=e^{\ii\phi}$, then
\[
j(v)=\nabla\phi.
\]
For an $S^1$-valued harmonic map, the current is divergence-free,
\[
\operatorname{div}j(v)=0,
\]
and, away from the punctures, it is also curl-free. In two dimensions
we therefore represent the canonical current in the form
\[
j(v)=J\nabla\Phi_\rho,
\qquad
J(\xi_1,\xi_2)=(-\xi_2,\xi_1).
\]
Since
\[
\operatorname{curl}(J\nabla\Phi_\rho)=\Delta\Phi_\rho,
\]
the curl-free condition becomes the linear equation
\[
\Delta\Phi_\rho=0
\qquad\text{in }\Omega_\rho.
\]
Moreover,
\[
|\nabla v|^2=|j(v)|^2=|\nabla\Phi_\rho|^2,
\]
so that the Dirichlet energy of the canonical map can be computed
entirely in terms of $\Phi_\rho$.

With the orientation convention $\tau=J\nu$, one has
\[
j(v)\cdot\tau=\partial_\nu\Phi_\rho,
\qquad
j(v)\cdot\nu=-\partial_\tau\Phi_\rho.
\]
Consequently, the prescribed boundary phase gives the Neumann condition
\[
\partial_\nu\Phi_\rho=q
\qquad\text{on }\Gamma_\rho.
\]
On each smooth boundary subarc this is equivalently
$\partial_\nu\Phi_\rho=\partial_\tau\chi$ for any local phase lift
$g_{\rm ph}=e^{\ii\chi}$.
On each artificial boundary component, the condition that
$\Phi_\rho$ be constant is equivalent to the natural zero-normal-current
condition
\[
j(v)\cdot\nu=0.
\]
The prescribed total fluxes across these components encode the
interior vortex degrees and the signed corner phase increments.
The constant traces $D_i(\rho)$ and $C_j(\rho)$ in
\eqref{eq:Phi-rho-problem-main} are not prescribed in advance.  They are
determined together with $\Phi_\rho$ by the flux conditions and the
normalization in that problem.
Thus the stream-function formulation converts the minimization over
$S^1$-valued maps with prescribed topological data into a linear
harmonic problem with constant boundary values and prescribed total
fluxes; this is the basis of the BBH linear reduction
\cite[Theorems~I.1--I.2]{BBH}.

The preceding construction applied to the class $\Xi_\rho$ defined in
Proposition~\ref{prop:finite-part-main} gives
\eqref{eq:linear-reduction-main}.  This is BBH Theorem~I.1 and its
Dirichlet variant, BBH Theorem~I.2 \cite{BBH}.  The artificial boundaries
of $\Omega_\rho$ are smooth, so the same stream-function proof applies.
\end{proof}

We define the limiting stream function $\Phi_0$ as
 \begin{equation}
\label{eq:stream-main}
  \Phi_0(x)=\sum_{i=1}^N d_i\ln|x-a_i|-
  \sum_{j=1}^K\frac{\eta_j}{\omega_j}\ln|x-b_j|+R_0(x),
\end{equation}
and
\begin{align}
&\Delta R_0 = 0,\ \text{on}\ \Omega\label{eq:108}\\
&\frac{\partial R_0}{\partial \nu} =  q + \sum_{j = 1}^K \frac{\eta_j}{\omega_j}  \frac{\partial \ln|z-b_j|}{\partial \nu} - \frac{\partial \sum_{i = 1}^N d_i \ln|z-a_i|}{\partial \nu}\ \text{on}\ \partial \Omega\backslash V,\label{eq:108-1}
\end{align}

\begin{lemma}[Convergence of \(\Phi_\rho\) to \(\Phi_0\)]
\label{pf:lem:Phi-rho-to-Phi0}
We have,
\begin{equation}
\label{pf:eq:Phi-rho-to-Phi0-estimate}
  \|\Phi_\rho-\Phi_0\|_{L^\infty(\Omega_\rho)}
  \le C\rho^{\alpha_R}+C\rho|\ln\rho|
\end{equation}
for some \(\alpha_R\in(0,1]\).  
\end{lemma}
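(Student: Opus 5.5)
The plan is to compare $\Phi_\rho$ and $\Phi_0$ on $\Omega_\rho$ through the difference $E_\rho:=\Phi_\rho-\Phi_0+k_\rho$. Here $k_\rho$ is a constant that restores the normalization $\int_{\Gamma_\rho}\Phi_\rho=0$, and we first show it is $O(\rho|\ln\rho|)$. The function $E_\rho$ is harmonic in $\Omega_\rho$, since both $\Phi_\rho$ and $\Phi_0$ are harmonic there. On the physical sides $\Gamma_\rho$ we have $\partial_\nu E_\rho=0$, because \eqref{eq:108-1} is designed exactly so that $\partial_\nu\Phi_0=q$ on $\partial\Omega\setminus V$. The work is therefore to control $E_\rho$ on the artificial boundaries $\Gamma_{vi}(\rho)$ and $\Gamma_{wj}(\rho)$, where $\Phi_\rho$ is constant.

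The first step is to check the fluxes of $\Phi_0$ on these boundaries. Near $a_i$, write $\Phi_0=d_i\ln|x-a_i|+\tilde R_i$ with $\tilde R_i$ smooth. The flux of $\Phi_0$ through $\Gamma_{vi}(\rho)$, with $\nu$ pointing into the hole, is then exactly $-2\pi d_i$, because the flux of $\tilde R_i$ vanishes. Moreover the oscillation of $\Phi_0$ on the circle is $O(\rho)$.

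Near $b_j$ we work in sector coordinates. The term $-\tfrac{\eta_j}{\omega_j}\ln r$ has flux $\eta_j$ through the arc of angle $\omega_j$ in a straight sector. In a curvilinear sector this holds up to $O(\rho)$, using the chart $\Psi_j$ with $D\Psi_j=Q_j+O(r)$.

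The remaining harmonic part $R_0$ solves a Neumann problem on a corner domain. Its data $q+\dots$ is integrable but may carry bounded jumps at the vertices. Standard corner regularity (Kondrat'ev/Grisvard-type expansions in the modes $r^{n\pi/\omega_j}\cos(n\pi\theta/\omega_j)$, plus a particular solution for the jumping data) gives $R_0\in C^{0,\alpha_R}$ near $b_j$, with $\alpha_R=\min\{1,\pi/\omega_j,\dots\}$, possibly up to a $r\ln r$ term when $\pi/\omega_j=1$. This is the source of both error terms $\rho^{\alpha_R}$ and $\rho|\ln\rho|$. Consequently, on each $\Gamma_{wj}(\rho)$ the function $\Phi_0$ equals a constant up to $O(\rho^{\alpha_R}+\rho|\ln\rho|)$, and its flux is $\eta_j+O(\rho^{\alpha_R})$.

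The second step controls $E_\rho$. By the above, $E_\rho$ is harmonic with zero Neumann data on $\Gamma_\rho$ and has small fluxes on the artificial boundaries. Also, $E_\rho$ equals a constant plus an $O(\epsilon_\rho)$ oscillation on each hole boundary, where $\epsilon_\rho:=C\rho^{\alpha_R}+C\rho|\ln\rho|$.

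We correct the flux defects by adding small multiples of fixed Green-type functions. These have logarithmic singularities at the $a_i$ and $b_j$ and Neumann normalization on $\partial\Omega$. Since each flux defect is $O(\rho^{\alpha_R})$ and each Green-type function is $O(|\ln\rho|)$ on $\Omega_\rho$, the correction contributes $O(\rho^{\alpha_R}|\ln\rho|)$. This logarithm is absorbed by slightly lowering $\alpha_R$.

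After this correction, $E_\rho$ has zero flux on every hole boundary. We then apply the maximum principle: in a mixed problem with homogeneous Neumann data on $\Gamma_\rho$, the maximum and minimum are attained on the hole boundaries, where $E_\rho$ is within $\epsilon_\rho$ of the constants $D_i$ and $C_j$.

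The zero-flux conditions force these constants to agree up to $O(\epsilon_\rho)$. The argument is the classical BBH one: if one constant were strictly the largest, Hopf's lemma on that near-level boundary would give a nonzero flux, a contradiction. The normalization on $\Gamma_\rho$ then fixes the common constant to be $O(\epsilon_\rho)$.

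The main obstacle is making the maximum/Hopf step quantitative with constants uniform in $\rho$ at the corner holes. Near $b_j$ the boundary of $\Omega_\rho$ has corners where the arc $\Gamma_{wj}(\rho)$ meets the sides, and these corners shrink as $\rho\to0$. The first approach to try is to rescale each corner hole to unit size, $x=b_j+\rho y$, and use an even reflection across the straightened sides (via $\Psi_j$). This turns the mixed problem near $b_j$ into an essentially Dirichlet problem on a scaled annular sector, where barrier estimates are uniform. If this is too delicate, the fallback is an energy estimate: bound $\int_{\Omega_\rho}|\nabla E_\rho|^2\lesssim\epsilon_\rho^2|\ln\rho|$ by testing with $E_\rho$, then upgrade to $L^\infty$ by interior and boundary Harnack-type estimates on dyadic annuli.
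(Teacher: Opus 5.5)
Your outline is correct and is essentially the paper's argument written out. The paper's proof cites the BBH comparison of $\Phi_\rho$ with $\Phi_0$ and the accompanying oscillation lemma, takes $\rho^{\alpha_R}$ from H\"older regularity of $R_0$ at the vertices (you get it from explicit sector expansions, the paper from Nittka's Lipschitz-domain result), and attributes $\rho|\ln\rho|$ to flux errors and the removed vertex pieces; these are exactly the roles of your Green-function correction and your constant $k_\rho$, which presupposes the normalization $\int_{\partial\Omega\setminus V}\Phi_0\,ds=0$.

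The step you flag as the main obstacle does not need uniform barriers at the corner holes, and your pointwise Hopf phrasing only closes when the hole data are exactly constant. Instead, test the zero-flux conditions against the harmonic function $h_S$ equal to $1$ on a group $S$ of holes and $0$ on the rest, with zero Neumann data on $\Gamma_\rho$. This shows that the averages of $E_\rho$ over the holes in $S$ and over the remaining holes coincide, taken with the nonnegative, equal-mass weights $\pm\partial_\nu h_S$. Hence each gap between consecutive sorted hole levels is at most $\epsilon_\rho$, independently of $\rho$.
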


\begin{proof}[Reference]
This is BBH Lemma I.2 and the oscillation lemma used in the shrinking-hole construction \cite[Section I.4]{BBH}, with standard elliptic estimates on Lipschitz/curvilinear polygonal domains. In a smooth-domain situation this is the BBH \(O(\rho)\) estimate.
For a nonsmooth domain, the exponent \(\alpha_R\) comes from H\"older
regularity of \(R_0\) \cite[Proposition~3.6]{Nittka2011Regularity}.
\begin{equation}
  |R_0(x)-R_0(y)|\le C|x-y|^{\alpha_R}.
\end{equation} 
The last term $O(\rho|\ln\rho|)$ accounts for the small flux errors and
the vertex pieces removed from the boundary.
\end{proof}

\begin{proposition}
On $\Omega_{\rho}$, we have 
\begin{align}
  \frac12\int_{\Omega_\rho}|\nabla\Phi_\rho|^2\,\dd x
  ={}&\pi\sum_{i=1}^Nd_i^2\ln\frac1\rho
  +\sum_{j=1}^K\frac{\eta_j^2}{2\omega_j}
  \ln\frac1\rho
  \notag\\
  &+W(\mathcal C)+o(1),
  \label{eq:finite-part-main}
\end{align}
where the finite part $W(\mathcal C)$ is given by
\begin{equation}
\begin{split}
\label{eq:general-W}
  W(\mathcal C)
  &=\frac12 \int_{\partial\Om\backslash V}\Phi_0q\,ds
    -\pi\sum_{i=1}^Nd_iR_0(a_i)
    +\pi\sum_{j=1}^K c_jR_0(b_j) \\
&-\pi \sum_{1\leq i\neq j\leq N} d_i d_j \ln|a_i-a_j|  - \pi \sum_{1\leq i\neq j\leq K} \frac{2\pi}{\omega_j} c_i c_j \ln|b_i-b_j|\\
&+ \pi\sum_{i = 1}^N\sum_{j = 1}^K \left(\frac{2\pi}{\omega_j}+1\right) d_i c_j \ln|b_j-a_i|
\end{split}
\end{equation}
\end{proposition}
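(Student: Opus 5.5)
The natural route is the BBH Green-formula computation of the Dirichlet energy on the perforated domain $\Omega_\rho$. Since $\Phi_\rho$ is harmonic, integration by parts gives
\[
  \int_{\Omega_\rho}|\nabla\Phi_\rho|^2\,\dd x
  =\int_{\Gamma_\rho}\Phi_\rho\,q\,\dd s
  +\sum_{i=1}^N D_i(\rho)\int_{\Gamma_{vi}(\rho)}\partial_\nu\Phi_\rho\,\dd s
  +\sum_{j=1}^K C_j(\rho)\int_{\Gamma_{wj}(\rho)}\partial_\nu\Phi_\rho\,\dd s .
\]
Using the flux conditions in \eqref{eq:Phi-rho-problem-main}, this becomes
\[
  \int_{\Gamma_\rho}\Phi_\rho q\,\dd s
  -2\pi\sum_i d_iD_i(\rho)
  +\sum_j\eta_jC_j(\rho).
\]
Thus the whole problem reduces to asymptotics of the unknown constants $D_i(\rho)$ and $C_j(\rho)$, and of the physical boundary integral. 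For these I will use Lemma~\ref{pf:lem:Phi-rho-to-Phi0}, which says that $\Phi_\rho=\Phi_0+O(\rho^{\alpha_R}+\rho|\ln\rho|)$ uniformly on $\Omega_\rho$.

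First I evaluate $\Phi_0$ on each small circle or arc using the explicit formula \eqref{eq:stream-main}. On $\Gamma_{vi}(\rho)$,
\[
  D_i(\rho)=d_i\ln\rho+\sum_{k\ne i}d_k\ln|a_i-a_k|-\sum_j\frac{\eta_j}{\omega_j}\ln|a_i-b_j|+R_0(a_i)+o(1).
\]
On $\Gamma_{wj}(\rho)$,
\[
  C_j(\rho)=-\frac{\eta_j}{\omega_j}\ln\rho+\sum_i d_i\ln|b_j-a_i|-\sum_{k\ne j}\frac{\eta_k}{\omega_k}\ln|b_j-b_k|+R_0(b_j)+o(1).
\]
Here the oscillation of the smooth terms over a set of diameter $\rho$ is $O(\rho^{\alpha_R})$, using the H\"older continuity of $R_0$ at the vertices. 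Next I insert these into the Green formula:
\begin{itemize}
\item The term $-2\pi d_i\cdot d_i\ln\rho$ produces $2\pi d_i^2\ln(1/\rho)$.
\item The term $\eta_j\cdot(-\eta_j/\omega_j)\ln\rho$ produces $(\eta_j^2/\omega_j)\ln(1/\rho)$.
\item After halving, these are exactly the divergent terms in \eqref{eq:finite-part-main}.
\item The remaining constant terms give the pairwise logarithmic interactions and the $R_0(a_i)$, $R_0(b_j)$ terms in \eqref{eq:general-W}, after substituting $\eta_j=2\pi c_j$.
\end{itemize}
Finally, $\int_{\Gamma_\rho}\Phi_\rho q$ converges to $\int_{\partial\Omega\setminus V}\Phi_0 q$. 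Near $b_j$, $\Phi_0$ grows only like $\ln|x-b_j|$ while $q$ is bounded, so the pieces of boundary removed near the vertices contribute $O(\rho|\ln\rho|)$. The error $\Phi_\rho-\Phi_0$ integrated against $q$ is $o(1)$ by Lemma~\ref{pf:lem:Phi-rho-to-Phi0}.

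The error terms need care: the $o(1)$ corrections to $D_i$ and $C_j$ are multiplied only by the fixed fluxes $-2\pi d_i$ and $\eta_j$, so they remain $o(1)$.

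The main obstacle is the bookkeeping of the constants, not the analysis. There are three sources of risk:
\begin{itemize}
\item The cross terms between interior vortices and vertices appear twice, once from $D_i$ and once from $C_j$, with different weights: $2\pi d_i\cdot\eta_j/\omega_j$ from $D_i$, and $\eta_j d_i$ from $C_j$. Their sum must produce the stated coefficient $(2\pi/\omega_j+1)d_ic_j$ after halving and the factor $\pi$ normalization.
\item The diagonal-free vertex–vertex sums must be symmetrized consistently.
\item The sign conventions, namely that $\nu$ points into the holes and $\tau=J\nu$, must agree with the signs in \eqref{eq:stream-main}.
\end{itemize}
I would first check the computation in the single-vertex, no-vortex case. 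There $\Phi_0=-(\eta/\omega)\ln r+R_0$ on a straight sector, and the direct energy $\frac12\int(\eta/\omega)^2r^{-2}\,r\,dr\,d\theta$ must reproduce $\frac{\eta^2}{2\omega}\ln\frac1\rho$. Only after that check would I carry out the general expansion. If the stated coefficients in \eqref{eq:general-W} do not emerge, I would report the coefficients that the Green formula actually yields.
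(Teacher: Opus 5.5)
Your proposal is correct and is essentially the paper's own proof. Both arguments apply Green's identity on $\Omega_\rho$ with the flux conditions, replace $D_i(\rho)$, $C_j(\rho)$ and $\Phi_\rho|_{\Gamma_\rho}$ by the corresponding values of $\Phi_0$ via Lemma~\ref{pf:lem:Phi-rho-to-Phi0}, and take the finite-part limit of $\int_{\Gamma_\rho}\Phi_0 q\,ds$. Your bookkeeping reproduces \eqref{eq:general-W} exactly, so no fallback is needed: for example, $\pi\frac{2\pi}{\omega_j}d_ic_j\ln|a_i-b_j|$ from $-\pi d_iD_i(\rho)$ and $\pi d_ic_j\ln|b_j-a_i|$ from $\frac12\eta_jC_j(\rho)$ sum to the stated cross-term coefficient.
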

\begin{proof}
Since \(\Phi_\rho\) is harmonic in \(\Omega_\rho\),
\begin{align}
\label{pf:eq:Phi-rho-green-identity}
  \int_{\Omega_\rho}|\nabla\Phi_\rho|^2
  &=\int_{\partial \Omega_\rho}\Phi_\rho\partial_\nu\Phi_\rho \\
  &=\int_{\Gamma_\rho}\Phi_\rho q\,ds
    -\sum_{i=1}^N2\pi d_i\Phi_\rho(x_i)
    +\sum_{j=1}^K\eta_j\Phi_\rho(y_j),
\end{align}
where \(x_i\in \Gamma_{vi}(\rho)\) and \(y_j\in \Gamma_{wj}(\rho)\) are arbitrary points.  Using Lemma~\ref{pf:lem:Phi-rho-to-Phi0},
\begin{equation}
\label{pf:eq:Phi0-green-identity}
  \frac12\int_{\Omega_\rho}|\nabla\Phi_\rho|^2
  =\frac12\int_{\Gamma_\rho}\Phi_0q\,ds
  -\pi\sum_{i=1}^N d_i\Phi_0(x_i)
  +\frac12\sum_{j=1}^K\eta_j\Phi_0(y_j)+o(1).
\end{equation}

For \(x_i\in \Gamma_{vi}(\rho)\),
\begin{equation}
\label{pf:eq:Phi0-expansion-inner-value}
  \Phi_0(x_i)=d_i\ln\rho+A_i+o(1),
\end{equation}
where
\begin{equation}
  A_i:=R_0(a_i)+\sum_{k\ne i}d_k\ln|a_i-a_k|-
  \sum_{j=1}^K\frac{\eta_j}{\omega_j}\ln|a_i-b_j|.
\end{equation}
For \(y_j\in \Gamma_{wj}(\rho)\),
\begin{equation}
\label{pf:eq:Phi0-expansion-corner-value}
  \Phi_0(y_j)=-\frac{\eta_j}{\omega_j}\ln\rho+B_j+o(1),
\end{equation}
where
\begin{equation}
  B_j:=R_0(b_j)+\sum_{i=1}^N d_i\ln|b_j-a_i|-
  \sum_{\ell\ne j}\frac{\eta_\ell}{\omega_\ell}\ln|b_j-b_\ell|.
\end{equation}
The boundary integral has a finite-part limit
\begin{equation}\label{pf:eq:Phi0_q_boundary}
  \int_{\partial \Omega\setminus V}\Phi_0q\,ds
  =\lim_{\rho\to0}\int_{\Gamma_\rho}\Phi_0q\,ds.
\end{equation}
Indeed, near a vertex \(\Phi_0=O(|\ln r|)\) and \(q\) is bounded on each open side, so the missing pieces have size \(O(\rho|\ln\rho|)\).
Combining \eqref{pf:eq:Phi0-green-identity}--
\eqref{pf:eq:Phi0_q_boundary} gives the expansion
\eqref{eq:finite-part-main}.  Substitution of the finite values $A_i$ and
$B_j$ above produces the boundary, defect--defect, defect--corner, and
corner--corner terms in \eqref{eq:general-W}.  
\end{proof}

\subsection{Derivative and vanishing gradient}\label{pf:sec:criticality}
Near an interior vortex $a_i$, write
\begin{equation}
  \Phi_0(x)=d_i\ln|x-a_i|+S_i(x),
  \label{eq:local-stream-main}
\end{equation}
where $S_i$ is harmonic.  Let $H_i$ be its harmonic conjugate, with the
orientation fixed by $\nabla H_i=J\nabla S_i$ and its additive constant
chosen so that it is the local regular phase in
\begin{align}
  u(z)&=\alpha_i
  \left(\frac{z-a_i}{|z-a_i|}\right)^{d_i}e^{\ii H_i(z)},
  &&\alpha_i\in\Sone,
  &&\text{near }a_i.
\end{align}

\begin{lemma}[Derivative of the renormalized energy]
\label{prop:derivative-main}
For each interior vortex,
\begin{equation}
  \nabla_{a_i}W(\mathcal C)
  =-2\pi d_i\nabla S_i(a_i)
  =2\pi d_iJ\nabla H_i(a_i).
  \label{eq:derivative-main}
\end{equation}
Thus $\nabla_{a_i}W=0$ if and only if $\nabla H_i(a_i)=0$.
\end{lemma}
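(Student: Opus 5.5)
We argue as in BBH Chapter~VIII, by differentiating the shrinking-hole characterization of $W$ rather than the explicit formula \eqref{eq:general-W}. The approach is a domain-variation (Hadamard) argument. Fix $i$, a direction $e\in\R^2$, and a cutoff $\zeta\in C_c^\infty(B(a_i,2r_0))$ with $\zeta\equiv1$ on $B(a_i,r_0)$, where $r_0$ is smaller than the distance from $a_i$ to the rest of the configuration and to $\partial\Omega$. Set $X=\zeta e$ and $\Psi_t=\mathrm{id}+tX$. Then $\Psi_t$ translates $B(a_i,\rho)$ rigidly to $B(a_i+te,\rho)$, and it fixes the vertex sectors, the other discs, and $\Gamma_\rho$. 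By \eqref{eq:finite-part-main}, the logarithmic counterterms do not depend on $a_i$. Hence
\[
W(\mathcal C_t)-W(\mathcal C)=\lim_{\rho\to0}\tfrac12\Bigl(\int_{\Omega_\rho^t}|\nabla\Phi_\rho^t|^2-\int_{\Omega_\rho}|\nabla\Phi_\rho|^2\Bigr).
\]
This identity requires that the $o(1)$ remainder in \eqref{eq:finite-part-main} be uniform for $a_i$ in a small ball. That uniformity should follow from Lemma~\ref{pf:lem:Phi-rho-to-Phi0}, since its constants depend continuously on the configuration.

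The next step is to use the variational characterization \eqref{eq:linear-reduction-main}, equivalently its dual form: $\Phi_\rho$ minimizes $\frac12\int|\nabla\Phi|^2-\int_{\Gamma_\rho}q\Phi$ among functions that are constant on the holes with the prescribed fluxes. Because of this, the first variation only involves transporting $\Phi_\rho$ by $\Psi_t$, which preserves all constraints. The standard computation then gives
\[
\frac{d}{dt}\Big|_{t=0}=\int_{\Omega_\rho}T_{kl}\,\partial_kX_l,\qquad T_{kl}=\partial_k\Phi_\rho\partial_l\Phi_\rho-\tfrac12\delta_{kl}|\nabla\Phi_\rho|^2 .
\]
No boundary term arises from $\Gamma_\rho$, because $X=0$ there. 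Since $\operatorname{div}T=0$ and $X=e$ on $B(a_i,r_0)$, we can integrate by parts on the annulus $B(a_i,2r_0)\setminus B(a_i,\rho)$. This reduces the derivative to a flux of the stress tensor through a circle $\partial B(a_i,r)$ with $\rho<r<r_0$; the flux is independent of $r$ because $\operatorname{div}T=0$.

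Now pass $\rho\to0$ with $r$ fixed, replacing $\Phi_\rho$ by $\Phi_0$ via Lemma~\ref{pf:lem:Phi-rho-to-Phi0}. The $L^\infty$ control there needs to be upgraded to $C^1$ convergence on the fixed circle, which follows from interior harmonic estimates. Then insert $\Phi_0=d_i\ln|x-a_i|+S_i$ from \eqref{eq:local-stream-main}. The flux splits into three pieces. The $\ln$--$\ln$ part integrates to zero by radial symmetry. The $S_i$--$S_i$ part is $O(r)$. The cross term tends to $\pm 2\pi d_i\nabla S_i(a_i)$ as $r\to0$, by the mean value property for $\nabla S_i$. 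Fixing the sign with the outward-normal convention of \eqref{eq:Phi-rho-problem-main} gives $-2\pi d_i\nabla S_i(a_i)$.

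Finally, $\nabla H_i=J\nabla S_i$ and $J^2=-I$ give $-\nabla S_i=J\nabla H_i$, hence the second equality in \eqref{eq:derivative-main}. Since $J$ is invertible and $d_i\neq0$ by Theorem~\ref{thm:clearing-main}, we get $\nabla_{a_i}W=0$ if and only if $\nabla H_i(a_i)=0$.

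The main obstacle is justifying the exchange of $d/dt$ with $\lim_{\rho\to0}$. This needs the expansion \eqref{eq:finite-part-main} to hold locally uniformly in $a_i$, or else we need to differentiate before taking the limit and show that the derivative converges uniformly. I would try to avoid this issue altogether. The plan would be to compute $W(\mathcal C_t)$ directly from the $\rho$-independent formula
\[
W=\tfrac12\lim_{\rho\to 0}\Bigl(\int_{\Omega_\rho}|\nabla\Phi_0|^2-\text{log terms}\Bigr),
\]
then differentiate in $t$ using the smooth dependence of $R_0$ on $a_i$ given by \eqref{eq:108}--\eqref{eq:108-1}. The vertex singularities are untouched by the deformation, since $X$ vanishes near $V$.
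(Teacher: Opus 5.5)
Your argument is correct, but it takes a genuinely different route from the paper's. The paper gives no argument of its own for this lemma. It cites the BBH derivative formula (Theorem VIII.3) and adds only that the fixed corners enter through the regular part $S_i$. You instead give an actual proof:
\begin{itemize}
\item a Hadamard variation at fixed $\rho$, supported in $B(a_i,2r_0)$;
\item reduction, via $\operatorname{div}T=0$, to the stress flux of $\Phi_\rho$ through a fixed circle;
\item the limit $\rho\to0$ and the mean value property for $\nabla S_i$.
\end{itemize}

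What your route buys is locality. Because $X$ vanishes near $\partial\Omega$, near $V$ and near the other defects, you never differentiate the explicit expression \eqref{eq:general-W}, whose boundary term $\frac12\int\Phi_0 q$ and whose $R_0$ carry the vertex singularities. The only global input is the interior convergence of Lemma~\ref{pf:lem:Phi-rho-to-Phi0}. This is exactly what turns the paper's one-line remark about the corners into an argument.

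It also links the lemma directly to Proposition~\ref{thm:vanishing-main}. Near $a_i$ one has $j(u_*)=J\nabla\Phi_0$ (this is how $H_i$ is fixed), hence $T_*(u_*)=-T(\Phi_0)$. Your identity $e\cdot\nabla_{a_i}W=-e\cdot\int_{\partial B(a_i,r)}T(\Phi_0)\,n\,\dd s$, with $n$ the outward normal of $B(a_i,r)$, therefore says that $\nabla_{a_i}W$ equals the limiting GL stress flux. Criticality then follows without passing through $\nabla H_i(a_i)$.

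The price is the two technical points you flag, both standard.
\begin{itemize}
\item Differentiability in $t$ at fixed $\rho$: this is an envelope argument for a uniformly elliptic quadratic problem after pulling back by $\Psi_t$.
\item Exchange of $d/dt$ with $\rho\to0$: this follows once the $a_i$-derivative of the fixed-$\rho$ finite part converges locally uniformly in the configuration. That requires interior harmonic estimates near the fixed circle, and constants in Lemma~\ref{pf:lem:Phi-rho-to-Phi0} that are locally uniform in $a_i$.
\end{itemize}
By contrast, the identity $W(\mathcal C_t)-W(\mathcal C)=\lim_{\rho\to0}(\dots)$ by itself needs no uniformity, contrary to what you wrote.

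Two small corrections.

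First, the dual principle should not impose the fluxes as constraints; for transported, non-harmonic competitors they are not even preserved. The correct form: $\Phi_\rho$ minimizes
$I(\Phi)=\frac12\int_{\Omega_\rho}|\nabla\Phi|^2-\int_{\Gamma_\rho}q\Phi+2\pi\sum_i d_i\Phi|_{\Gamma_{vi}(\rho)}-\sum_j\eta_j\Phi|_{\Gamma_{wj}(\rho)}$
over functions constant on each artificial boundary component. The fluxes then arise as natural conditions, and $\frac12\int_{\Omega_\rho}|\nabla\Phi_\rho|^2=-\min I$. That minus sign is what makes the energy derivative $+\int T_{kl}\partial_kX_l$. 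Merely transporting $\Phi_\rho$ and differentiating its Dirichlet energy would give the opposite sign. The primal phase formulation gives the same $+$ sign, since the phase stress tensor is $-T$.

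Second, the final sign is therefore not a convention to be fixed afterwards. Use $+\int T_{kl}\partial_kX_l$ together with the normal $-n$ on the inner circle of the annulus. Then $\operatorname{div}T=0$ and the mean value property give exactly $-2\pi d_i\,e\cdot\nabla S_i(a_i)$, with no $\pm$ ambiguity.
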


\begin{proof}
This is the BBH derivative formula for the renormalized energy, in the ``vanishing gradient'' discussion of Chapter VIII \cite[Chapter VIII.2--VIII.3, especially Theorem VIII.3]{BBH}. The corner defects are fixed during the variation of $a_i$, so they enter only through the regular part $S_i$.
\end{proof}

\begin{proposition}[Vanishing gradient for critical limits]
\label{thm:vanishing-main}
For every interior singularity produced by
Theorem~\ref{thm:clearing-main},
\begin{equation}
  \nabla H_i(a_i)=0,
  \qquad\text{and hence}\qquad
  \nabla_{a_i}W(\mathcal C)=0.
  \label{eq:vanishing-main}
\end{equation}
\end{proposition}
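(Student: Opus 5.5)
The plan is to follow the BBH Chapter X stress--energy (Pohozaev) argument locally around each interior vortex $a_i$. The polygonal features never enter this argument. The vertices are at fixed positive distance, and by Lemma~\ref{pf:lem:scalar}(iii) the weight $\rho_\eps$ is exponentially close to $1$ in $C^1$ on a fixed disc $B(a_i,2r)\Subset\Omega\setminus V$. Once the local identity is established, Lemma~\ref{prop:derivative-main} converts $\nabla H_i(a_i)=0$ into $\nabla_{a_i}W(\mathcal C)=0$.

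\textbf{Step 1 (local conservation law).} Fix $r$ so small that $\overline{B(a_i,2r)}$ contains no other point of $A\cup V$. Multiply \eqref{eq:reduced-equation-main} by $\partial_k v_\eps$ and integrate over $B(a_i,r)$. This gives the weighted stress--energy identity
\[
\int_{\partial B(a_i,r)}\Bigl(\rho_\eps^2\bigl(\tfrac12|\nabla v_\eps|^2\nu_k-\partial_\nu v_\eps\cdot\partial_k v_\eps\bigr)+\tfrac{\rho_\eps^4}{4\eps^2}(1-|v_\eps|^2)^2\nu_k\Bigr)\dd s
=\int_{B(a_i,r)}\Bigl(\tfrac12\partial_k(\rho_\eps^2)|\nabla v_\eps|^2+\tfrac{\partial_k(\rho_\eps^4)}{4\eps^2}(1-|v_\eps|^2)^2\Bigr)\dd x .
\]
Equivalently, one can use the unweighted identity for $U_\eps$ itself: $U_\eps$ solves \eqref{eq:GL-EL} exactly, so its stress tensor is divergence free, and $U_\eps-v_\eps$ is exponentially small in $C^1(B(a_i,2r))$. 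Either way, the right-hand side is $O(e^{-c/\eps}\eps^{-2}\ln\frac1\eps)\to0$, using \eqref{eq:reduced-log-bound-main} and Lemma~\ref{pf:lem:scalar}(iii).

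\textbf{Step 2 (passing to the limit on the circle).} On $\partial B(a_i,r)$, Theorem~\ref{thm:clearing-main} gives $C^1$ convergence $v_{\eps_\ell}\to u_*$. Corollary~\ref{pf:prop:weighted-bos-good}, applied on an annulus, gives that the potential term on the circle tends to $0$; one may need to average over $r$ in a small interval to pass from an $L^1$ bound on the annulus to the circle. Hence the boundary integral converges to
\[
T_k(r):=\int_{\partial B(a_i,r)}\bigl(\tfrac12|\nabla u_*|^2\nu_k-\partial_\nu u_*\cdot\partial_k u_*\bigr)\dd s=0 .
\]

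\textbf{Step 3 (explicit evaluation).} Near $a_i$, use the proper form $u_*=\alpha_i\bigl(\frac{z-a_i}{|z-a_i|}\bigr)^{d_i}e^{\ii H_i}$ from Proposition~\ref{prop:canonical-main}. Then $j(u_*)=d_i\nabla\theta+\nabla H_i$, with $H_i$ smooth. Expand the quadratic stress: the $|\nabla\theta|^2$ terms integrate to zero by symmetry, and the $|\nabla H_i|^2$ terms are $O(r)$. The cross term gives the classical BBH computation
\[
T(r)=-2\pi d_i\,J\nabla H_i(a_i)+O(r)
\]
(up to sign convention). Letting $r\to0$ yields $\nabla H_i(a_i)=0$, and Lemma~\ref{prop:derivative-main} finishes the proof.

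The main obstacle is Step 2's control of the potential and weight-derivative terms uniformly on a fixed circle. This requires the good-region decay of Corollary~\ref{pf:prop:weighted-bos-good} rather than just the $O(1)$ bound of Proposition~\ref{prop:fixed-truncation-main}. I would handle it by working with $U_\eps$ directly, so the identity is exact, and absorbing $\rho_\eps-1$ via the exponential estimate.
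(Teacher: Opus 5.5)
Your proposal is correct and follows the paper's own route. The paper uses the exact divergence-free stress tensor of $U_\eps$ on $B(a_i,r)$, passes to the limit on a fixed circle using $\rho_\eps\to1$ exponentially and the $C^k_{\rm loc}$ convergence of Theorem~\ref{thm:clearing-main}, computes the flux of the local form as $2\pi d_iJ\nabla H_i(a_i)$, and concludes with Lemma~\ref{prop:derivative-main}. Your handling of the potential term is heavier than needed: on the circle $U_\eps$ converges in $C^2$ and $|U_\eps|$ is bounded away from zero, so the equation already gives $\eps^{-2}(1-|U_\eps|^2)=O(1)$ pointwise, the potential density is $O(\eps^2)$ uniformly, and neither the good-region decay nor averaging in $r$ is required.
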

\begin{proof}
Fix an interior singularity $a_i$ and choose $r_0>0$ so that
$\overline{B(a_i,r_0)}\Subset \Omega\setminus(V\cup(A\setminus\{a_i\}))$.
For a solution $U_\eps$ of \eqref{eq:GL-EL}, define the stress tensor
\[
  (T_\eps)_{pq}
  =\partial_pU_\eps\cdot\partial_qU_\eps
  -\left\{
    \frac12|\nabla U_\eps|^2
    +\frac1{4\eps^2}(1-|U_\eps|^2)^2
  \right\}\delta_{pq}.
\]
The Euler--Lagrange equation gives $\divop T_\eps=0$.  Hence, for every
$0<r<r_0$,
\begin{equation}
  \int_{\partial B(a_i,r)}T_\eps\nu\,\dd s=0.
  \label{pf:eq:stress-flux-zero}
\end{equation}

On a fixed annulus about $a_i$, Lemma~\ref{pf:lem:scalar}(iii) gives
$\rho_\eps\to1$ exponentially, while
Theorem~\ref{thm:clearing-main} gives smooth convergence of $v_\eps$ to
$u_*$.  Thus $U_\eps=\rho_\eps v_\eps\to u_*$ in $C^2$ there.  Moreover,
the equation and the fact that $|U_\eps|$ is bounded away from zero imply
that $\eps^{-2}(1-|U_\eps|^2)$ is locally bounded, so the potential part of
$T_\eps$ tends uniformly to zero on the fixed circle.  Passing to the
limit in \eqref{pf:eq:stress-flux-zero} yields
\begin{equation}
  \int_{\partial B(a_i,r)}T_*(u_*)\nu\,\dd s=0,
  \qquad
  T_*(u_*)=\nabla u_*^T\nabla u_*
  -\frac12|\nabla u_*|^2I.
  \label{pf:eq:limit-stress-flux}
\end{equation}

Near $a_i$, use the local representation associated with
\eqref{eq:local-stream-main}:
\[
  u_*(z)=\alpha_i
  \left(\frac{z-a_i}{|z-a_i|}\right)^{d_i}
  e^{\ii H_i(z)}.
\]
Writing its phase as $\phi=d_i\theta+H_i$ and setting
$\tau=J\nu$ on $\partial B(a_i,r)$ gives
\[
  \partial_\nu\phi=\nabla H_i\cdot\nu,
  \qquad
  \partial_\tau\phi=\frac{d_i}{r}+\nabla H_i\cdot\tau,
\]
and therefore
\[
  T_*(u_*)\nu
  =\frac12\left\{(\partial_\nu\phi)^2
  -(\partial_\tau\phi)^2\right\}\nu
  +(\partial_\nu\phi)(\partial_\tau\phi)\tau.
\]
Let $h_i=\nabla H_i(a_i)$.  Since $H_i$ is smooth, substitution of
$\nabla H_i=h_i+O(r)$ into \eqref{pf:eq:limit-stress-flux}, followed by
$r\rightarrow0$, gives
\begin{align*}
  0
  &=d_i\int_0^{2\pi}
  \left[-(h_i\cdot\tau)\nu+(h_i\cdot\nu)\tau\right]\dd\theta\\
  &=2\pi d_iJh_i.
\end{align*}
Here the $d_i^2/r$ term integrates to zero because
$\int_0^{2\pi}\nu\,\dd\theta=0$, and every remaining quadratic regular
term is $O(r)$.  Since $d_i\ne0$, this proves
$\nabla H_i(a_i)=0$.  Proposition~\ref{prop:derivative-main} then gives
$\nabla_{a_i}W=0$.  This is the local stress-energy calculation used in
BBH Theorem~X.5 \cite[Chapters VII--VIII and Theorem X.5]{BBH}; the choice
of an interior ball is why no corner contribution appears.
\end{proof}

\section{Conclusion and outlook}\label{sec:conclusion}

We have developed an asymptotic description of 
Ginzburg--Landau (GL) critical points on two-dimensional domains with corners
and Dirichlet data that vanish at the vertices. The weighted
Lassoued--Mironescu splitting separates the forced scalar boundary layer
from the phase and vortex structure. Under the reduced logarithmic energy
bound and vertex logarithmic tightness, fixed-truncation estimates yield subsequential local smooth
convergence of the normalized maps away from the interior vortices to a
canonical harmonic map, incorporating both integer interior degrees and
fractional corner singularities determined by the corner angles and lifted
phase jumps. Subtracting the interior and corner logarithmic divergences
defines the corresponding renormalized energy. A local stress-energy
argument then shows that the limiting interior vortex configuration is a
critical point of this energy, with the vertices and the discrete
topological data held fixed. Thus the renormalized energy describes the
equilibrium conditions for limiting critical configurations in this
setting, without requiring energy minimality.

Several questions remain for future work. The present result establishes
a necessary condition on limits of admissible GL solutions;
the converse is to determine which critical configurations of the
renormalized energy can be realized by families of such solutions. 
This would also raise the question of how
the stability and Morse index of the GL solutions relate to the
Hessian of the renormalized energy, while degenerate configurations and
higher-degree vortex cores may require additional solvability conditions.
In two dimensions, another direction is to replace the prescribed
Dirichlet trace by weak anchoring, through a boundary penalization and its
associated Robin-type condition, or to consider Neumann and mixed boundary
conditions. One would then need to determine how the boundary condition
and, for weak anchoring, the scaling of the anchoring strength with
$\eps$ affect boundary defects, corner phase selection, and the effective
interaction energy. Extending the analysis to three dimensions presents
a different geometric challenge: for the complex-valued Ginzburg--Landau
model, vortices are typically curves rather than isolated points.
The limiting equilibrium problem therefore involves
variations of entire curves and their interaction with the boundary.
Controlling concentration and multiplicity along these curves, possible
singular limits, and their behaviour near edges and vertices of
polyhedral domains would require tools beyond the planar harmonic-map
construction.

\bibliographystyle{plain}
\bibliography{main}

@article{ASnonvanishing,
  author  = {Andr{\'e}, Nelly and Shafrir, Itai},
  title   = {Minimization of a {Ginzburg--Landau} type functional with nonvanishing {Dirichlet} boundary condition},
  journal = {Calculus of Variations and Partial Differential Equations},
  volume  = {7},
  pages   = {191--217},
  year    = {1998}
}

@article{ASzeros,
  author  = {Andr{\'e}, Nelly and Shafrir, Itai},
  title   = {On the minimizers of a {Ginzburg--Landau}-type energy when the boundary condition has zeros},
  journal = {Advances in Differential Equations},
  volume  = {9},
  number  = {7--8},
  pages   = {891--960},
  year    = {2004}
}

@article{BerestyckiBrezis1980FreeBoundaryPlasma,
  author  = {Berestycki, Henri and Brezis, Ha{\^i}m},
  title   = {On a free boundary problem arising in plasma physics},
  journal = {Nonlinear Analysis: Theory, Methods \& Applications},
  volume  = {4},
  number  = {3},
  pages   = {415--436},
  year    = {1980},
  doi     = {10.1016/0362-546X(80)90083-8}
}

@book{BBH,
  author    = {Bethuel, Fabrice and Brezis, Ha{\"\i}m and H{\'e}lein, Fr{\'e}d{\'e}ric},
  title     = {{Ginzburg--Landau} Vortices},
  series    = {Progress in Nonlinear Differential Equations and Their Applications},
  volume    = {13},
  publisher = {Birkh{\"a}user},
  address   = {Boston},
  year      = {1994}
}

@article{BOS,
  author  = {Bethuel, Fabrice and Orlandi, Giandomenico and Smets, Didier},
  title   = {Improved estimates for the {Ginzburg--Landau} equation: the elliptic case},
  journal = {Annali della Scuola Normale Superiore di Pisa, Classe di Scienze},
  series  = {5},
  volume  = {4},
  pages   = {319--355},
  year    = {2005}
}

@article{BO,
  author  = {Brezis, Ha{\"\i}m and Oswald, Luc},
  title   = {Remarks on sublinear elliptic equations},
  journal = {Nonlinear Analysis: Theory, Methods \& Applications},
  volume  = {10},
  number  = {1},
  pages   = {55--64},
  year    = {1986}
}

@book{Evans,
  author    = {Evans, Lawrence C.},
  title     = {Partial Differential Equations},
  series    = {Graduate Studies in Mathematics},
  volume    = {19},
  edition   = {2},
  publisher = {American Mathematical Society},
  address   = {Providence, RI},
  year      = {2010}
}

@article{Kato1972,
  author  = {Kato, Tosio},
  title   = {Schr{\"o}dinger operators with singular potentials},
  journal = {Israel Journal of Mathematics},
  volume  = {13},
  pages   = {135--148},
  year    = {1972}
}

@article{LM,
  author  = {Lassoued, Lotfi and Mironescu, Petru},
  title   = {{Ginzburg--Landau} type energy with discontinuous constraint},
  journal = {Journal d'Analyse Math{\'e}matique},
  volume  = {77},
  pages   = {1--26},
  year    = {1999}
}

@article{lin1995solutions,
  author  = {Lin, Fang-Hua},
  title   = {Solutions of {Ginzburg--Landau} equations and critical points of the renormalized energy},
  journal = {Annales de l'Institut Henri Poincar{\'e} C, Analyse Non Lin{\'e}aire},
  volume  = {12},
  number  = {5},
  pages   = {599--622},
  year    = {1995}
}

@book{Mattila1999,
  author    = {Mattila, Pertti},
  title     = {Geometry of Sets and Measures in Euclidean Spaces: Fractals and Rectifiability},
  series    = {Cambridge Studies in Advanced Mathematics},
  volume    = {44},
  publisher = {Cambridge University Press},
  address   = {Cambridge},
  year      = {1999}
}

@article{Nittka2011Regularity,
  author  = {Nittka, Robin},
  title   = {Regularity of solutions of linear second order elliptic and parabolic boundary value problems on {Lipschitz} domains},
  journal = {Journal of Differential Equations},
  volume  = {251},
  number  = {4--5},
  pages   = {860--880},
  year    = {2011},
  doi     = {10.1016/j.jde.2011.05.019}
}

\end{document}